\numberwithin{equation}{section}
\theoremstyle{plain}
\newtheorem{theorem}{Theorem}[section]
\newtheorem{lemma}[theorem]{Lemma}
\newtheorem{proposition}[theorem]{Proposition}
\newtheorem{corollary}[theorem]{Corollary}
\newtheorem{conjecture}[theorem]{Conjecture}
\theoremstyle{definition}
\newtheorem{question}[theorem]{Question}
\begin{document}

\title[Centralizing traces and Lie triple isomorphisms]
{Centralizing traces and Lie triple isomorphisms on generalized matrix algebras}

\author{Ajda Fo\v sner, Xinfeng Liang, Feng Wei and Zhankui Xiao}

\address{Fo\v sner: Faculty of Management, University of Primorska,
Cankarjeva 5, SI-6104 Koper, Slovenia}

\email{ajda.fosner@fm-kp.si}

\address{Liang: School of Mathematics and Statistics, Beijing Institute of
Technology, Beijing, 100081, P. R. China}

\email{lxfrd@163.com}

\address{Wei: School of Mathematics and Statistics, Beijing Institute of
Technology, Beijing, 100081, P. R. China}

\email{daoshuo@hotmail.com}\email{daoshuowei@gmail.com}

\address{Xiao: School of Mathematical Sciences, Huaqiao University,
Quanzhou, Fujian, 362021, P. R. China}

\email{zhkxiao@gmail.com}\email{zhkxiao@bit.edu.cn}

\begin{abstract}
Let $\mathcal{G}$ be a generalized matrix algebra over a commutative
ring $\mathcal{R}$ and $\mathcal{Z(G)}$ be the center of $\mathcal{G}$. Suppose that ${\mathfrak q}\colon \mathcal{G}\times
\mathcal{G}\longrightarrow \mathcal{G}$ is an $\mathcal{R}$-bilinear
mapping and ${\mathfrak T}_{\mathfrak q}\colon
\mathcal{G}\longrightarrow \mathcal{G}$ is the trace of
$\mathfrak{q}$. We describe the form of ${\mathfrak T}_{\mathfrak
q}$ satisfying the condition $[{\mathfrak T}_{\mathfrak
q}(G), G]\in \mathcal{Z(G)}$ for all $G\in \mathcal{G}$.
The question of when ${\mathfrak T}_{\mathfrak q}$ has the proper
form is considered. Using the aforementioned trace function, we
establish sufficient conditions for each Lie triple isomorphism of
$\mathcal{G}$ to be almost standard. As applications we characterize
Lie triple isomorphisms of full matrix algebras, of triangular algebras and
of certain unital algebras with nontrivial idempotents. Some topics for future research closely related to our current work are proposed at the end of
this article.
\end{abstract}

\subjclass[2010]{15A78, 15A86, 16W10.}

\keywords{Centralizing trace, Lie triple isomorphism, generalized matrix algebra, triangular algebra,
functional identity, Banach space nest algebra.}

\thanks{The work of the fourth author is supported by the National Natural Science Foundation of China
(Grant No. 11226068 and No. 11301195).}

\maketitle

\section{Introduction}\label{xxsec1}

Let $\mathcal{R}$ be a commutative ring with identity, $\mathcal{A}$
be a unital algebra over $\mathcal{R}$ and $\mathcal{Z(A)}$ be the
center of $\mathcal{A}$. Let us denote the commutator or the Lie
product of the elements $a, b\in \mathcal{A}$ by $[a, b]=ab-ba$.
Recall that an $\mathcal{R}$-linear mapping ${\mathfrak f}:
\mathcal{A}\longrightarrow \mathcal{A}$ is said to be
\textit{centralizing} if $[{\mathfrak f}(a), a]\in \mathcal{Z(A)}$ for all $a\in
\mathcal{A}$. In particular, the mapping $\mathfrak{f}$ is called \textit{commuting} if $[{\mathfrak f}(a), a]=0$ for all $a\in \mathcal{A}$. When we investigate a centralizing (or commuting) mapping, the principal
task is to describe its form. The identity mapping and every mapping
which has its range in $\mathcal{Z(A)}$ are two classical examples
of commuting mappings. Furthermore, the sum and the pointwise
product of commuting mappings are also commuting mappings. For instance, the mapping
$$
\mathfrak{f}(x)=\lambda_0(x)x^n+\lambda_1(x)x^{n-1}+\cdots\cdots+\lambda_{n-1}(x)x+\lambda_n(x), \hspace{6pt} \lambda_i\colon \mathcal{A}\longrightarrow \mathcal{Z(A)} \eqno(1.1)
$$ 
is commuting for any choice of central maps $\lambda_i$. Of course, there are other examples; namely, elements commuting with $x$ may not necessarily be equal to a polynomial in $x$ (with central coefficients) and so in most rings there is a variety of possibilities
of how to find commuting maps different from those described in $(1.1)$. We
encourage the reader to read the well-written survey paper
\cite{Bresar3}, in which the author presented the development of the
theory of commuting mappings and their applications in details.

Let $n$ be a positive integer and $\mathfrak{q}\colon
\mathcal{A}^n\longrightarrow \mathcal{A}$. We say that
$\mathfrak{q}$ is $n$-\textit{linear} if ${\mathfrak q}(a_1,\cdots,
a_n)$ is $\mathcal{R}$-linear in each variable $a_i$, that is,
${\mathfrak q}(a_1, \cdots, ra_i+sb_i, \cdots, a_n)=r {\mathfrak
q}(a_1, \cdots, a_i, \cdots, a_n)+s{\mathfrak q}(a_1, \cdots, b_i,
\cdots, a_n)$ for all $r, s\in \mathcal{R}, a_i, b_i\in \mathcal{A}$
and $i=1, 2, \cdots, n$. The mapping ${\mathfrak T}_{\mathfrak
q}\colon \mathcal{A}\longrightarrow \mathcal{A}$ defined by
${\mathfrak T}_{\mathfrak q}(a)={\mathfrak q}(a, a, \cdots, a)$ is
called a \textit{trace} of ${\mathfrak q}$. We say that a centralizing
trace ${\mathfrak T}_{\mathfrak q}$ is \textit{proper} if it can be written as
$$
{\mathfrak T}_{\mathfrak q}(a)=\sum_{i=0}^{n}\mu_i(a)a^{n-i}, 
\hspace{10pt}
\forall a\in \mathcal{A},
$$
where $\mu_i(0\leq i\leq n)$ is a mapping from $\mathcal{A}$ into
$\mathcal{Z(A)}$ and every $\mu_i(0\leq i\leq n)$ is in fact a trace
of an $i$-linear mapping ${\mathfrak q}_i$ from $\mathcal{A}^i$ into
$\mathcal{Z(A)}$. Let $n=1$ and ${\mathfrak f}\colon
A\longrightarrow A$ be an $\mathcal{R}$-linear mapping. In this
case, an arbitrary trace ${\mathfrak T}_{\mathfrak f}$ of
${\mathfrak f}$ exactly equals to itself. Moreover, if a
centralizing trace ${\mathfrak T}_{\mathfrak f}$ of ${\mathfrak f}$
is proper, then it has the form
$$
{\mathfrak T}_{\mathfrak f}(a)=z a+\eta(a),  \hspace{10pt} \forall a\in
\mathcal{A},
$$
where $z\in \mathcal{Z}(A)$ and $\eta$ is an $\mathcal{R}$-linear mapping from $\mathcal{A}$ into $\mathcal{Z(A)}$. Let us see the case of $n=2$. Suppose
that ${\mathfrak g}\colon \mathcal{A}\times
\mathcal{A}\longrightarrow \mathcal{A}$ is an $\mathcal{R}$-bilinear
mapping. If a centralizing trace ${\mathfrak T}_{\mathfrak g}$ of
${\mathfrak g}$ is proper, then it is of the form
$$
{\mathfrak T}_{\mathfrak g}(a)=z a^2+\mu(a)a +\nu(a), \hspace{10pt} \forall a\in \mathcal{A},
$$
where $z\in \mathcal{Z(A)}$, $\mu$ is an $\mathcal{R}$-linear
mapping from $\mathcal{A}$ into $\mathcal{Z(A)}$ and $\nu$ as a trace of some bilinear mapping maps $\mathcal{A}$ into $\mathcal{Z(A)}$. Bre\v{s}ar started the study
of commuting and centralizing traces of multilinear mappings in his
series of works \cite{Bresar0, Bresar1, Bresar2, Bresar3}, where he
investigated the structure of centralizing traces of (bi-)linear
mappings on prime rings. It was proved that in certain rings, in
particular, prime rings of characteristic different from $2$ and
$3$, every centralizing trace of a biadditive mapping is commuting.
Moreover, every centralizing mapping of a prime ring of
characteristic not $2$ is of the proper form and is actually
commuting. An exciting discovery \cite{FosnerWeiXiao} is that every
centralizing trace of arbitrary bilinear mapping on triangular
algebras is commuting in some additional conditions.
It has turned out that this
study is closely related to the problem of characterizing Lie
isomorphisms or Lie derivations of associative rings
\cite{BeidarMartindaleMikhalev, BenkovicEremita1, BresarChebotarMartindale, BresarEremitaVillena, FosnerWeiXiao, XiaoWei2}. 
Lee et al further generalized Bre\v{s}ar's results by showing that each commuting trace of an arbitrary multilinear
mapping on a prime ring also has the proper form
\cite{LeeWongLinWang}. On the other hand, centralizing mappings are looked on as the most basic and important examples of functional identities \cite{BresarChebotarMartindale}.

Cheung in \cite{Cheung2} studied commuting mappings of triangular
algebras (e.g., of upper triangular matrix algebras and nest
algebras). He determined the class of triangular algebras for which
every commuting mapping is proper. Xiao and Wei \cite{XiaoWei1}
extended Cheung's result to the case of generalized matrix algebras.
They established sufficient conditions for each commuting mapping of
a generalized matrix algebra $
\left[\smallmatrix A & M\\
N & B \endsmallmatrix \right]$ to be proper. Benkovi\v{c} and Eremita
\cite{BenkovicEremita1} considered commuting traces of bilinear
mappings on a triangular algebra $
\left[\smallmatrix A & M\\
O & B \endsmallmatrix \right]$. They gave conditions under which
every commuting trace of a triangular algebra 
$
\left[\smallmatrix A & M\\
O & B \endsmallmatrix \right]$ 
is proper. More recently, Franca \cite{Franca1, Franca2} investigated commuting mappings on subsets of matrices that are not closed under addition such as invertible matrices, singular matrices, matrices of rank-$k$, etc. The research results demonstrate that the commuting mappings on these sets basically have the proper form. Liu \cite{Liu1, Liu2} immediately extended
Franca's works to the case of centralizing mappings. These works explicitly imply that functional identities can be developed to the sets that are not closed under addition. The form of commuting traces of multilinear mappings of upper triangular matrix algebras was earlier described in
\cite{BeidarBresarChebotar}. Simultaneously, some researchers engage in characterizing $k$-commuting mappings of generalized matrix algebras and those of unital algebras with notrivial idempotents, see \cite{DuWang1, LiWei2, QiHou3}. Motivated by Benkovi\v{c} and Eremita's work \cite{BenkovicEremita1}, the present authors \cite{FosnerWeiXiao} studied centralizing traces of triangular algebas. It is shown that under some mild conditions every
centralizing trace of an arbitrary bilinear mapping on a triangular
algebra $\mathcal{T}$ is commuting.  It is natural to ask the following question

\begin{question}\cite[Qustion 5.1]{FosnerWeiXiao}\label{xxsec1.1}
Let $\mathcal{G}=\left[\smallmatrix
A & M\\
N & B \endsmallmatrix \right]$ be a generalized matrix algebra over $\mathcal{R}$ and ${\mathfrak q}\colon \mathcal{G}\times
\mathcal{G}\longrightarrow \mathcal{G}$ be an $\mathcal{R}$-bilinear
mapping. Under what conditions, every centralizing trace ${\mathfrak T}_{\mathfrak q}:
\mathcal{G}\longrightarrow \mathcal{G}$ of ${\mathfrak q}$ has
the proper form ?
\end{question} 

\noindent One of the main aims of this article is
to address the above question and provide a sufficient condition for each centralizing trace of
arbitrary bilinear mappings on a generalized matrix algebra $
\left[\smallmatrix A & M\\
N & B \endsmallmatrix \right]$ to be proper. Consequently, this makes
it possible for us to characterize centralizing traces of bilinear
mappings on full matrix algebras, those of bilinear mappings on
triangular algebras and those of bilinear mappings on certain unital
algebras with nontrivial idempotents.

Another important purpose of this article is devoted to the Lie
triple isomorphisms problem of generalized matrix algebras. At his 1961 AMS
Hour Talk, Herstein proposed many problems concerning the structure
of Jordan and Lie mappings in associative simple and prime rings
\cite{Herstein}. The renowned Herstein's Lie-type mapping research
program was formulated since then. The involved Lie mappings mainly
include Lie isomorphisms, Lie triple isomorphisms, Lie derivations
and Lie triple derivations et al. Given a commutative ring
$\mathcal{R}$ with identity and two associative
$\mathcal{R}$-algebras $\mathcal{A}$ and $\mathcal{B}$, one define a
\textit{Lie triple isomorphism} from $\mathcal{A}$ into $\mathcal{B}$ to be
an $\mathcal{R}$-linear bijective mapping ${\mathfrak l}$ satisfying
the condition
$$
{\mathfrak l}([[a, b], c])=[[{\mathfrak l}(a), {\mathfrak l}(b)], \mathfrak{l}(c)]
\hspace{8pt} \forall a, b, c\in \mathcal{A}.
$$
For example, an isomorphism or the negative of an anti-isomorphism of
one algebra onto another is also a Lie isomorphism. Furthermore,
every Lie isomorphism and every Jordan isomorphism are Lie triple isomorphisms. One can ask
whether the converse is true in some special cases. That is, does
every Lie triple isomorphism between certain associative algebras arise
from isomorphisms and anti-isomorphisms in the sense of modulo
mappings whose range is central ? If $\mathfrak{m}$ is an
isomorphism or the negative of an anti-isomorphism from
$\mathcal{A}$ onto $\mathcal{B}$ and $\mathfrak{n}$ is an
$\mathcal{R}$-linear mapping from $\mathcal{A}$ into the center
$\mathcal{Z(B)}$ of $\mathcal{B}$ such that $\mathfrak{n}([[a,b], c])=0$
for all $a, b, c\in \mathcal{A}$, then the mapping
$$
\mathfrak{l}=\mathfrak{m}+\mathfrak{n} \eqno(\spadesuit)
$$
is a Lie triple homomorphism. We shall say that a Lie triple isomorphism
$\mathfrak{l}\colon A\longrightarrow B$ is \textit{standard} in the
case where it can be expressed in the preceding form $(\spadesuit)$.

The resolution of Herstein's Lie isomorphisms problem in matrix
algebra background has been well-known for a long time. Hua
\cite{Hua} proved that every Lie automorphism of the full matrix
algebra $\mathcal{M}_n(\mathcal{D})(n\geq 3)$ over a division ring
$\mathcal{D}$ is of the standard form $(\spadesuit)$. This result
was extended to the nonlinear case by Dolinar \cite{Dolinar2}
and \v{S}emrl \cite{Semrl} and was further refined by them.
Dokovi\'{c} \cite{Dokovic} showed that every Lie automorphism of
upper triangular matrix algebras $\mathcal{T}_n(\mathcal{R})$ over a
commutative ring $\mathcal{R}$ without nontrivial idempotents has
the standard form as well. Marcoux and Sourour
\cite{MarcouxSourour1} classified the linear mappings preserving
commutativity in both directions (i.e., $[x,y] = 0$ if and only if
$[\mathfrak{f}(x), \mathfrak{f}(y)]=0$) on upper triangular matrix
algebras $\mathcal{T}_n(\mathbb{F})$ over a field $\mathbb{F}$. Such
a mapping is either the sum of an algebraic automorphism of
$\mathcal{T}_n(\mathbb{F})$ (which is inner) and a mapping into the
center $\mathbb{F}I$, or the sum of the negative of an algebraic
anti-automorphism and a mapping into the center $\mathbb{F}I$. The
classification of the Lie automorphisms of
$\mathcal{T}_n(\mathbb{F})$ is obtained as a consequence.
Benkovi\v{c} and Eremita \cite{BenkovicEremita1} directly applied
the theory of commuting traces to the study of Lie isomorphisms on a
triangular algebra $
\left[\smallmatrix A & M\\
O & B \endsmallmatrix \right]$. They provided sufficient conditions
under which every commuting trace of $
\left[\smallmatrix A & M\\
O & B \endsmallmatrix \right]$ is proper. This is directly applied
to the study of Lie isomorphisms of $
\left[\smallmatrix A & M\\
O & B \endsmallmatrix \right]$. It turns out that under some mild
assumptions, each Lie isomorphism of $
\left[\smallmatrix A & M\\
O & B \endsmallmatrix \right]$ has the standard form $(\spadesuit)$.
These results are further extended to the case of generalized matrix algebras
\cite{XiaoWei2}. The present authors \cite{FosnerWeiXiao} investigated Lie triple isomorphisms of triangular algebas via centralizing  
trace techniques. We establish sufficient conditions for each Lie
triple isomorphism on $\mathcal{T}$ to be almost standard. 
This naturally gives rise to the following question. 

\begin{question}\cite[Qustion 5.4]{XiaoWei2}\label{xxsec1.2}
Let $\mathcal{G}=\mathcal{G}(A, M, N, B)$ and $\mathcal{G}^\prime=\mathcal{G}^\prime(
A^\prime, M^\prime, N^\prime, B^\prime)$ be generalized matrix algebras over a commutative ring $\mathcal{R}$ with
$1/2\in\mathcal{R}$. Let $\mathfrak{l}\colon
\mathcal{G}\longrightarrow\mathcal{G^\prime}$ be a Lie triple
isomorphism. Under what conditions does $\mathfrak{l}$ has a
decomposition expression similar to $(\spadesuit)$  ?
\end{question}

\noindent{Simultaneously}, Lie triple isomorphisms between rings
and between (non-)self-adjoint operator algebras have received a
fair amount of attention and have also been intensively studied. The
involved rings and operator algebras include (semi-)prime rings, the
algebra of bounded linear operators, $C^\ast$-algebras, von Neumann
algebras, $H^\ast$-algebras, Banach space nest algebras, Hilbert
space nest algebras, reflexive algebras, see \cite{BaiDuHou1,
BaiDuHou2, BanningMathieu, CalderonGonzalez1, CalderonGonzalez2,
CalderonGonzalez3, CalderonHaralampidou, Lu, MarcouxSourour2,
Mathieu, Miers1, Miers2, QiHou1, QiHou2, Semrl, Sourour, WangLu,
YuLu, ZhangZhang}. One recent remarkable work concerning Lie isomorphisms between Bnanach space nest algebras is due to Qi, Hou and Deng \cite{QiHouDeng}. Let $\mathcal{N}$ and $\mathcal{M}$ be nests on Banach spaces $X$ and $Y$ over the (real or complex)
field $\mathbb{F}$ and let ${\rm Alg}\mathcal{N}$ and ${\rm Alg}\mathcal{M}$ be the associated nest algebras, respectively. It is shown that
a mapping $\Phi\colon  {\rm Alg}\mathcal{N}\longrightarrow {\rm Alg}\mathcal{M}$ is a Lie ring isomorphism (i.e., $\Phi$ is additive, Lie multiplicative
and bijective) if and only if $\Phi$ has the form $\Phi=TAT^{-1}+h(A)I$ for all $A\in {\rm Alg}\mathcal{N}$ or $\Phi(A)=−TA^{*}T^{-1}+h(A)I$ for all $A\in {\rm Alg}\mathcal{N}$, where $h$ is an additive functional vanishing
on all commutators and $T$ is an invertible bounded linear or conjugate linear operator when ${\rm dim}X=\infty$; $T$ is a bijective $\tau$-linear transformation for some field automorphism $\tau$ of $\mathbb{F}$ when
${\rm dim}X<\infty$.

This article is organized as follows. Section $2$ contains the definition of generalized matrix
algebra and some classical examples. In Section $3$ we will give a suitable answer to Question \ref{xxsec1.1} and build
sufficient conditions for each centralizing trace of an arbitrary bilinear
mapping on a generalized matrix algebra $
\left[\smallmatrix A & M\\
N & B \endsmallmatrix \right]$ to be proper (Theorem
\ref{xxsec3.4}). And then we apply this result to describe the
commuting traces of various generalized matrix algebras. In Section
$4$ we will treat Question \ref{xxsec1.2} and give a sufficient condition under which every Lie triple
isomorphism from a generalized matrix algebra into another one has
the almost standard form (Theorem \ref{xxsec4.3}). As corollaries of
Theorem \ref{xxsec4.3}, characterizations of Lie triple isomorphisms on
triangular algebras, on full matrix algebras and on certain unital
algebras with nontrivial idempotents are obtained. The last section
presents some potential further research topics related to our
current work.

\section{Generalized Matrix Algebras and Examples}\label{xxsec2}

Let us begin with the definition of generalized matrix algebras
given by a Morita context. Let $\mathcal{R}$ be a commutative ring
with identity. A \textit{Morita context} consists of two unital
$\mathcal{R}$-algebras $A$ and $B$, two bimodules $_AM_B$ and
$_BN_A$, and two bimodule homomorphisms called the pairings
$\Phi_{MN}: M\underset {B}{\otimes} N\longrightarrow A$ and
$\Psi_{NM}: N\underset {A}{\otimes} M\longrightarrow B$ satisfying
the following commutative diagrams:
$$
\xymatrix{ M \underset {B}{\otimes} N \underset{A}{\otimes} M
\ar[rr]^{\hspace{8pt}\Phi_{MN} \otimes I_M} \ar[dd]^{I_M \otimes
\Psi_{NM}} && A
\underset{A}{\otimes} M \ar[dd]^{\cong} \\  &&\\
M \underset{B}{\otimes} B \ar[rr]^{\hspace{10pt}\cong} && M }
\hspace{6pt}{\rm and}\hspace{6pt} \xymatrix{ N \underset
{A}{\otimes} M \underset{B}{\otimes} N
\ar[rr]^{\hspace{8pt}\Psi_{NM}\otimes I_N} \ar[dd]^{I_N\otimes
\Phi_{MN}} && B
\underset{B}{\otimes} N \ar[dd]^{\cong}\\  &&\\
N \underset{A}{\otimes} A \ar[rr]^{\hspace{10pt}\cong} && N
\hspace{2pt}.}
$$
Let us write this Morita context as $(A, B, M, N, \Phi_{MN},
\Psi_{NM})$. We refer the reader to \cite{Morita} for the basic
properties of Morita contexts. If $(A, B, M, N,$ $ \Phi_{MN},
\Psi_{NM})$ is a Morita context, then the set
$$
\left[
\begin{array}
[c]{cc}%
A & M\\
N & B\\
\end{array}
\right]=\left\{ \left[
\begin{array}
[c]{cc}%
a& m\\
n & b\\
\end{array}
\right] \vline a\in A, m\in M, n\in N, b\in B \right\}
$$
form an $\mathcal{R}$-algebra under matrix-like addition and
matrix-like multiplication, where at least one of the two bimodules
$M$ and $N$ is distinct from zero. Such an $\mathcal{R}$-algebra is
usually called a \textit{generalized matrix algebra} of order $2$
and is denoted by
$$
\mathcal{G}=\mathcal{G}(A, M, N, B)=\left[
\begin{array}
[c]{cc}%
A & M\\
N & B\\
\end{array}
\right].
$$
In a similar way, one can define a generalized matrix algebra of
order $n>2$. It was shown that up to isomorphism, arbitrary
generalized matrix algebra of order $n$ $(n\geq 2)$ is a generalized
matrix algebra of order 2 \cite[Example 2.2]{LiWei1}. If one of the
modules $M$ and $N$ is zero, then $\mathcal{G}$ exactly degenerates
to an \textit{upper triangular algebra} or a \textit{lower
triangular algebra}. In this case, we denote the resulted upper
triangular algebra (resp. lower triangular algebra) by
$$\mathcal{T^U}=\mathcal{T}(A, M, B)=
\left[
\begin{array}
[c]{cc}%
A & M\\
O & B\\
\end{array}
\right]   \hspace{8pt} \left({\rm resp.} \hspace{4pt} \mathcal{T_L}=\mathcal{T}(A, N, B)=
\left[
\begin{array}
[c]{cc}%
A & O\\
N & B\\
\end{array}
\right]\right)
$$
Note that our current generalized matrix algebras contain those
generalized matrix algebras in the sense of Brown \cite{Brown} as
special cases. Let $\mathcal{M}_n(\mathcal{R})$ be the full matrix
algebra consisting of all $n\times n$ matrices over $\mathcal{R}$.
It is worth to point out that the notion of generalized matrix
algebras efficiently unifies triangular algebras with full matrix
algebras together. The feature of our systematic work
is to deal with all questions related to (non-)linear mappings of
triangular algebras and of full matrix algebras under a unified
frame, which is the admired generalized matrix algebras frame, see
\cite{FosnerWeiXiao, LiWei1, LiWei2, LiWykWei, XiaoWei1, XiaoWei2}.

Let us list some classical examples of generalized matrix algebras
which will be revisited in the sequel (Section \ref{xxsec3} and
Section \ref{xxsec4}). Since these examples have already been
presented in many papers, we just state their title without any
details.
\begin{enumerate}
\item[(\rm a)] Unital algebras with nontrivial
idempotents;
\item[(\rm b)] Full matrix algebras;
\item[(\rm c)] Inflated algebras;
\item[(\rm d)] Upper and lower triangular matrix algebras;
\item[(\rm e)] Nest algebras over a Hilbert space;
\item[(\rm f)] Standard operator algebras over a Banach space.
\end{enumerate}

\noindent{These} generalized matrix algebras regularly appear in the theory of associative algebras and noncommutative Noetherian algebras in the most diverse situations, which is due to its powerful persuasiveness and intuitive illustration effect. However, people pay less attention to the linear mappings of generalized matrix algebras. It was Krylov who initiated the study of linear mappings on generalized matrix algebras from the point of classifying view \cite{Krylov1}. Since then many articles are devoted to this topic, and a number of interesting results are obtained (see \cite{Benkovic5, BenkovicSirovnik, DuWang2, DuWang3, LiWei1, LiWei2, LiWykWei, QiHou3, XiaoWei1, XiaoWei2}). Nevertheless, it leaves so much to be desired. The representation theory, homological behavior, $K$-theory of generalized matrix algebras are intensively intesgivated by Krylov and his coauthors in \cite{Krylov2, Krylov3, KrylovTuganbaev}. We will propose some open questions concerning linear mappings and functional identities of generalized matrix algebras in Section \ref{xxsec5} of this article. Therefore, generalized matrix algebras are indeed one class of great potential and inspiring associative algebras. We can never emphasize on the importance of generalized matrix algebras too much.

\section{Centralizing traces of generalized matrix algebras.}\label{xxsec3}

In this section we will establish sufficient conditions for each
centralizing trace of an arbitrary bilinear mapping on a generalized
matrix algebra $\mathcal{G}$ to be proper (Theorem
\ref{xxsec3.4}). Consequently, we are able to describe centralizing traces of bilinear mappings on triangular algebras, on full matrix
algebras and on certain unital algebras with nontrivial idempotents.
The most important advantage is that Theorem \ref{xxsec3.4} will be used to
characterize Lie triple isomorphisms from a generalized matrix algebra into
another in Section \ref{xxsec4}.

Throughout this section, we denote the generalized matrix algebra of
order $2$ originated from the Morita context $(A, B, _AM_B, _BN_A,
\Phi_{MN}, \Psi_{NM})$ by
$$
\mathcal{G}=\mathcal{G}(A, M, N, B)=\left[
\begin{array}
[c]{cc}%
A & M\\
N & B
\end{array}
\right] ,
$$
where at least one of the two bimodules $M$ and $N$ is distinct from
zero. We always assume that $M$ is faithful as a left $A$-module and
also as a right $B$-module, but no any constraint conditions on $N$.
The center of $\mathcal{G}$ is
$$
\mathcal{Z(G)}=\left\{\hspace{2pt} a\oplus b \hspace{4pt} \vline \hspace{4pt} am=mb, \hspace{4pt} na=bn,\ \forall\
m\in M, \hspace{4pt} \forall n\in N \right\}.
$$
Indeed, by \cite[Lemma 1]{Krylov1} we know that the center
$\mathcal{Z(G)}$ consists of all diagonal matrices $a\oplus b$,
where $a\in \mathcal{Z}(A)$, $b\in \mathcal{Z}(B)$ and $am=mb$,
$na=bn$ for all $m\in M, n\in N$. However, in our situation which
$M$ is faithful as a left $A$-module and also as a right $B$-module,
the conditions that $a\in \mathcal{Z}(A)$ and $b\in \mathcal{Z}(B)$
become redundant and can be deleted. Indeed, if $am=mb$ for all
$m\in M$, then for any $a^\prime \in A$ we get
$$
(aa^\prime-a^\prime a)m=a(a'm)-a'(am)=(a'm)b-a'(mb)=0.
$$
The assumption that $M$ is faithful as a left $A$-module leads to
$aa'-a'a=0$ and hence $a\in \mathcal{Z}(A)$. Likewise, we also have
$b\in \mathcal{Z}(B)$.

Let us define two natural $\mathcal{R}$-linear projections
$\pi_A:\mathcal{G}\rightarrow A$ and $\pi_B:\mathcal{G}\rightarrow
B$ by
$$
\pi_A: \left[
\begin{array}
[c]{cc}%
a & m\\
n & b\\
\end{array}
\right] \longmapsto a \quad \text{and} \quad \pi_B: \left[
\begin{array}
[c]{cc}%
a & m\\
n & b\\
\end{array}
\right] \longmapsto b.
$$
By the above paragraph, it is not difficult to see that $\pi_A
\left(\mathcal{Z(G)}\right)$ is a subalgebra of $\mathcal{Z}(A)$ and
that $\pi_B\left(\mathcal{Z(G)}\right)$ is a subalgebra of
$\mathcal{Z}(B)$.
Given an element $a\in\pi_A(\mathcal{Z(G)})$, if $a\oplus b, a\oplus b^\prime \in \mathcal{Z(G)}$, then we have $am=mb=mb'$ for
all $m\in M$. Since $M$ is faithful as a right $B$-module,
$b=b^\prime$. That implies there exists a unique
$b\in\pi_B(\mathcal{Z(G)})$, which is denoted by $\varphi(a)$, such
that $
a\oplus b \in \mathcal{Z(G)}$. It is easy to
verify that the map $\varphi:\pi_A(\mathcal{Z(G)})\longrightarrow
\pi_B(\mathcal{Z(G)})$ is an algebraic isomorphism such that
$am=m\varphi(a)$ and $na=\varphi(a)n$ for all $a\in
\pi_A(\mathcal{Z(G)}), m\in M, n\in N$.

Let $\mathcal{A}$ and $\mathcal{B}$ be algebras. Recall an
$(\mathcal{A}, \mathcal{B})$-bimodule $\mathcal{M}$ is \textit{loyal} if
$a\mathcal{M}b=0$ implies that $a=0$ or $b=0$ for all $a\in
\mathcal{A}, b\in \mathcal{B}$. Obviously, each loyal $(\mathcal{A}, \mathcal{B})$-bimodule $\mathcal{M}$ is faithful as a left $\mathcal{A}$-module and also as a right $\mathcal{B}$-module. We now list some basic facts related to generalized matrix algebras, which can be found in \cite{XiaoWei2}.

\begin{lemma}{\rm (\cite[Lemma 2.3]{BenkovicEremita1})}\label{xxsec3.1}
Let $M$ be a loyal $(A, B)$-bimodule and let $f, g$ be two arbitrary mappings from $M$ into $A$. Suppose $f(m)n+g(n)m=0$ for all ${m,n\in M}$. If B is noncommutative, then $f=g=0$.
\end{lemma}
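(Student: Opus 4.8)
The plan is to read the hypothesis $f(m)n+g(n)m=0$ as a functional identity valued in the bimodule $M$ and to exploit the right $B$-action in order to manufacture a commutator in $B$, which the noncommutativity hypothesis keeps nonzero. Because $f$ and $g$ are only assumed to be maps, I will never add inputs together; I will substitute single bimodule elements and repeatedly use the associativity $a(xc)=(ax)c$ of the left $A$-action and right $B$-action on $M$, invoking loyalty only at the very end. In particular, additivity of $f$ and $g$ is never needed.

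First I would fix $m,n\in M$ and $b\in B$ and replace $n$ by $nb$ in the hypothesis, obtaining $f(m)(nb)+g(nb)m=0$. Rewriting $f(m)(nb)=(f(m)n)b$ and substituting $f(m)n=-g(n)m$ from the original identity turns the first term into $-g(n)(mb)$, which leaves the auxiliary relation
$$
g(nb)\,m=g(n)(mb),\qquad \forall\, m,n\in M,\ b\in B. \eqno(\ast)
$$

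Next I would iterate $(\ast)$ on a product $b_1b_2$. Using $n(b_1b_2)=(nb_1)b_2$ and applying $(\ast)$ first with $b_2$ and then with $b_1$, I compute
$$
g\big(n(b_1b_2)\big)m=g(nb_1)(mb_2)=g(n)\big(m(b_2b_1)\big),
$$
whereas applying $(\ast)$ directly to the product $b_1b_2$ gives $g(n(b_1b_2))m=g(n)(m(b_1b_2))$. Subtracting the two expressions and distributing the actions yields
$$
g(n)\big(m\,[b_1,b_2]\big)=0,\qquad \forall\, m,n\in M,\ b_1,b_2\in B,
$$
that is, $g(n)\,M\,[b_1,b_2]=0$ for every $n\in M$. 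Since $B$ is noncommutative, I may choose $b_1,b_2$ with $c:=[b_1,b_2]\neq 0$; then $g(n)Mc=0$ with $c\neq 0$, so loyalty of $M$ forces $g(n)=0$, and as $n$ is arbitrary, $g=0$. Feeding $g=0$ back into the hypothesis gives $f(m)n=0$ for all $m,n$, i.e. $f(m)M=0$; since loyal bimodules are faithful as left modules, this forces $f(m)=0$, whence $f=0$.

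The only genuinely delicate point is the manufacture of the commutator: one must check that iterating $(\ast)$ produces the order $b_2b_1$ while the direct application produces $b_1b_2$, so that their difference is exactly the commutator $[b_1,b_2]$ rather than collapsing to zero. Everything else is routine bookkeeping with the bimodule associativity, and the ``arbitrary map'' hypothesis causes no trouble precisely because additivity of $f$ and $g$ is never invoked.
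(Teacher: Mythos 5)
Your proof is correct: the substitution $n\mapsto nb$ yields the shift relation $g(nb)m=g(n)(mb)$, iterating it against a product $b_1b_2$ produces $g(n)\bigl(m[b_1,b_2]\bigr)=0$, and loyalty together with noncommutativity of $B$ kills $g$ and then $f$; additivity of $f,g$ is indeed never needed since only the bimodule actions are distributed. Note that the paper itself states this lemma without proof, citing Benkovi\v{c} and Eremita \cite[Lemma 2.3]{BenkovicEremita1}, and your argument is essentially the same as the one given there.
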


\begin{lemma}\label{xxsec3.2}
Let $\mathcal{G}=\mathcal{G}(A, M, N, B)$ be a generalized matrix algebra with a loyal $(A,B)$-bimodule $M$.
Let ${\alpha\in \pi_A(\mathcal{Z(\mathcal{G})})}$ and let $0\neq a\in A$.
If $\alpha a=0$, then $\alpha=0$.
\end{lemma}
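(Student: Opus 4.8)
The plan is to exploit the loyalty of $M$ together with the central relation that links $\alpha$ to its partner $\varphi(\alpha)$, rather than to argue inside $A$ alone. Since $\alpha\in\pi_A(\mathcal{Z(G)})$, the discussion preceding the lemma supplies a unique element $b:=\varphi(\alpha)\in\pi_B(\mathcal{Z(G)})$ with $\alpha\oplus b\in\mathcal{Z(G)}$, so that $\alpha m=mb$ for every $m\in M$ (the companion relation $n\alpha=bn$ on $N$ will not be needed). The idea is to convert the hypothesis $\alpha a=0$, which lives in $A$, into a product of the form $aMb$ on which loyalty can be brought to bear.

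First I would fix an arbitrary $m\in M$ and observe that $am$ again lies in $M$, so applying the central relation to $am$ gives $\alpha(am)=(am)\varphi(\alpha)=(am)b$. On the other hand, associativity of the left $A$-action yields $\alpha(am)=(\alpha a)m=0$. Comparing the two expressions and using the compatibility of the two module actions, we get $a(mb)=(am)b=0$ for every $m\in M$; that is, $aMb=0$. Now loyalty enters: since $aMb=0$ and $a\neq 0$ by hypothesis, the definition of a loyal bimodule forces $b=0$. Substituting this back into $\alpha m=mb$ gives $\alpha m=0$ for all $m\in M$, i.e.\ $\alpha M=0$, and since $M$ is faithful as a left $A$-module we conclude $\alpha=0$. (Equivalently, $b=\varphi(\alpha)=0=\varphi(0)$ together with the injectivity of the isomorphism $\varphi$ delivers the same conclusion.)

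The whole argument collapses to a short chain once the correct quantity is isolated, so I do not expect a genuine obstacle. The only step demanding care is the very first move: recognizing that one must multiply into $M$ in order to manufacture the product $aMb$, thereby activating loyalty, instead of staying in $A$, where $\alpha$ is merely a central zero divisor and offers no leverage. Everything after that is forced by the bimodule axioms, loyalty, and faithfulness.
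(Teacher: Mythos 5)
Your proof is correct and is essentially the argument the paper relies on (it defers to \cite{XiaoWei2}, where the same computation appears, going back to \cite{BenkovicEremita1}): convert $\alpha a=0$ into $aM\varphi(\alpha)=0$ via the central relation $\alpha m=m\varphi(\alpha)$, invoke loyalty to kill $\varphi(\alpha)$, and finish by faithfulness (or injectivity of $\varphi$). No gaps.
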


\begin{lemma}\label{xxsec3.3}
Let $\mathcal{G}=\mathcal{G}(A, M, N, B)$ be a generalized matrix algebra with a loyal $(A,B)$-bimodule $M$. 
Then the center $\mathcal{Z(G)}$ of $\mathcal{G}$ is a domain.
\end{lemma}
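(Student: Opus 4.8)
The plan is to verify directly that $\mathcal{Z(G)}$ is a nonzero commutative ring with no zero divisors. Commutativity is automatic, since $\mathcal{Z(G)}$ is by definition the center of the associative algebra $\mathcal{G}$. For the unital structure I would first observe that the matrix identity $1_A\oplus 1_B$ lies in $\mathcal{Z(G)}$ (the defining conditions $1_A m=m1_B$ and $n1_A=1_Bn$ hold trivially) and serves as its identity element. Moreover $\mathcal{Z(G)}$ is nonzero: because $M$ is loyal it is in particular nonzero, which forces $A\neq 0$ and hence $1_A\neq 0$ (if $1_A=0$ then $M=1_AM=0$, a contradiction), so $1_A\oplus 1_B\neq 0$.

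The heart of the argument is the absence of zero divisors, and here I would exploit the algebraic isomorphism $\varphi\colon \pi_A(\mathcal{Z(G)})\longrightarrow \pi_B(\mathcal{Z(G)})$ recorded above together with Lemma \ref{xxsec3.2}. Take two central elements $a\oplus b$ and $a'\oplus b'$ with $(a\oplus b)(a'\oplus b')=0$; since the product of diagonal matrices is again diagonal, this reads componentwise as $aa'=0$ in $A$ and $bb'=0$ in $B$. The strategy is to transfer the whole question to the $A$-corner: as $b=\varphi(a)$ and $b'=\varphi(a')$ are determined by $a$ and $a'$ respectively, it suffices to prove that $a=0$ or $a'=0$, for then applying $\varphi$ (with $\varphi(0)=0$) kills the matching $B$-component and forces the corresponding factor to vanish entirely.

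To finish I would split into cases on $a'$. If $a'=0$, then $b'=\varphi(a')=0$ and we are done immediately, so assume $0\neq a'\in A$. Then $aa'=0$ with $a\in \pi_A(\mathcal{Z(G)})$ and $a'\neq 0$ is precisely the hypothesis of Lemma \ref{xxsec3.2}, which yields $a=0$ and hence $b=\varphi(a)=0$, that is, $a\oplus b=0$. Thus one of the two factors vanishes in either case, proving that $\mathcal{Z(G)}$ is a domain. I do not anticipate a serious obstacle here; the only point requiring care is that Lemma \ref{xxsec3.2} is asymmetric (it controls a \emph{central} factor $\alpha$ against an \emph{arbitrary} nonzero element $a$), so one must be careful to place the central element in the role of $\alpha$ and the merely-nonzero element in the role of $a$, and then to invoke $\varphi$ to promote the $A$-corner conclusion to the full diagonal element.
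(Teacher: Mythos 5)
Your proof is correct. Note that the paper does not actually prove Lemma \ref{xxsec3.3}: it is listed among ``basic facts \dots which can be found in \cite{XiaoWei2},'' so there is no in-paper argument to compare against. Your reconstruction is essentially the standard one and is the intended route: central elements are diagonal, of the form $a\oplus\varphi(a)$, so a vanishing product reads $aa'=0$ in the $A$-corner; placing the central factor in the role of $\alpha$ in Lemma \ref{xxsec3.2} (you are right that this asymmetry is the one point needing care) forces $a=0$ or $a'=0$, and applying $\varphi$ promotes this to vanishing of the full diagonal element. The cited source argues the same way except that it invokes loyalty directly -- from $aa'=0$ and $a'm=mb'$ one gets $aMb'=0$, whence $a=0$ or $b'=0$ -- which is exactly the computation hidden inside Lemma \ref{xxsec3.2}, so the two arguments are equivalent. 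One pedantic remark: loyalty of $M$ by itself does not force $M\neq 0$ (a zero bimodule is vacuously loyal when $A=0$ or $B=0$); what guarantees $M\neq 0$, and hence $1_A\oplus 1_B\neq 0$, is the section's standing assumption that $M$ is faithful as a left module over the unital algebra $A$, which is the cleaner justification for the nonzero-ness of $\mathcal{Z(G)}$.
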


In this section we will establish sufficient conditions for each centralizing
trace of an arbitrary bilinear mapping on generalized matrix algebra
$\mathcal{G}=\mathcal{G}(A, M, N, B)$ to be proper.

\begin{theorem}\label{xxsec3.4}
Let $\mathcal{G}=\mathcal{G}(A, M, N, B)$  be a $2$-torsionfree generalized matrix algebra over a
commutative ring $\mathcal{R}$ and ${\mathfrak q}\colon
\mathcal{G}\times \mathcal{G}\longrightarrow \mathcal{G}$ be an
$\mathcal{R}$-bilinear mapping. If
\begin{enumerate}
\item[{\rm(1)}] every commuting linear mapping on $A$ or $B$ is proper;
\item[{\rm(2)}] $\pi_A(\mathcal{Z(G)})=\mathcal{Z}(A)\neq A$ and
$\pi_B(\mathcal{Z(G)})=\mathcal{Z}(B)\neq
B$;
\item[{\rm(3)}] $M$ is loyal,
\end{enumerate}
then every centralizing trace ${\mathfrak T}_{\mathfrak q}:
\mathcal{G}\longrightarrow \mathcal{G}$ of ${\mathfrak q}$ is
proper.
\end{theorem}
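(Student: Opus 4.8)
The plan is to run the standard Peirce-decomposition argument for generalized matrix algebras and to reduce the bilinear problem on $\mathcal{G}$ to linear commuting-map problems on $A$ and $B$, where hypothesis (1) can be applied. Write $e_1=\left[\smallmatrix 1_A & 0\\ 0 & 0\endsmallmatrix\right]$ and $e_2=\left[\smallmatrix 0 & 0\\ 0 & 1_B\endsmallmatrix\right]$ for the two standard idempotents, so that $\mathcal{G}=e_1\mathcal{G}e_1\oplus e_1\mathcal{G}e_2\oplus e_2\mathcal{G}e_1\oplus e_2\mathcal{G}e_2$ is identified with $A\oplus M\oplus N\oplus B$. For $G=\left[\smallmatrix a & m\\ n & b\endsmallmatrix\right]$ I would write the value of the trace as $\mathfrak{T}_{\mathfrak q}(G)=\left[\smallmatrix p(G) & s(G)\\ t(G) & r(G)\endsmallmatrix\right]$ and expand the centralizing condition $[\mathfrak{T}_{\mathfrak q}(G),G]\in\mathcal{Z(G)}$ into its four Peirce components. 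Since a central element of $\mathcal{G}$ is diagonal, this immediately yields two ``off-diagonal'' identities forcing the $(1,2)$- and $(2,1)$-entries $p(G)m+s(G)b-as(G)-mr(G)$ and $t(G)a+r(G)n-np(G)-bt(G)$ of the commutator to vanish, together with a ``diagonal'' requirement that $p(G)a-ap(G)+s(G)n-mt(G)$ and $t(G)m-ns(G)+r(G)b-br(G)$ assemble into an element of $\mathcal{Z(G)}$.

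Next I would linearize. Because $\mathfrak{T}_{\mathfrak q}$ is the trace of a bilinear map, replacing $G$ by $G+H$ and subtracting produces the associated symmetric biadditive identity; polarizing in this way (using $2$-torsionfreeness to divide by $2$) and then specializing one argument to the idempotents $e_1,e_2$ and to pure module elements of $M$ and $N$ lets me read off the Peirce components of $\mathfrak{q}$ one at a time. The off-diagonal components $s(G),t(G)$ are the first target: feeding diagonal and module inputs into the vanishing $(1,2)$- and $(2,1)$-identities and invoking the loyalty of $M$ through Lemma \ref{xxsec3.1} (here the noncommutativity of $A$ or $B$, guaranteed by $\mathcal{Z}(A)\neq A$ and $\mathcal{Z}(B)\neq B$ in (2), is essential), I expect to pin down $s$ and $t$ up to contributions coming from a single central coefficient $z$ and from maps landing in the module parts that can later be absorbed into the linear term $\mu(G)G$ and the central trace $\nu(G)$.

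With the off-diagonal parts controlled, I would turn to the diagonal components $p$ and $r$. Restricting attention to inputs supported on $A$ (respectively $B$) and using Lemma \ref{xxsec3.2} together with the fact that $\mathcal{Z(G)}$ is a domain (Lemma \ref{xxsec3.3}) to cancel the module cross-terms, the diagonal requirement should collapse, first, to the statement that the induced commutators are central, which by $\mathcal{Z}(A)\neq A$ and $\mathcal{Z}(B)\neq B$ upgrades the centralizing behaviour to commuting; and second, after the genuinely quadratic part $zG^2$ has been subtracted, to the statement that a suitable $A$-valued (respectively $B$-valued) \emph{linear} map built from $p$ (respectively $r$) is commuting. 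At this point hypothesis (1) applies and forces each such map to be proper, of the form $a\mapsto za+\eta(a)$ with $\eta$ central-valued. Finally I would reassemble the pieces: the quadratic coefficients extracted on the two diagonal blocks must agree and define a single $z\in\mathcal{Z(G)}$, which is where condition (2) and the algebra isomorphism $\varphi\colon\pi_A(\mathcal{Z(G)})\longrightarrow\pi_B(\mathcal{Z(G)})$ are used to match $z$ across the diagonal and to certify its centrality in $\mathcal{G}$; the linear and central-valued remainders then combine into $\mu(G)G+\nu(G)$, giving the proper form.

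The main obstacle I anticipate is precisely this diagonal reduction: disentangling the quadratic contribution $zG^2$ from the linear commuting behaviour on $A$ and $B$, so that only the weaker hypothesis (1) on commuting \emph{linear} maps — rather than a hypothesis on commuting traces of bilinear maps — is needed. Making this work requires carefully exploiting the loyalty of $M$ to trade the bilinear cross-terms mixing $M$ and $N$ against central elements, and then keeping scrupulous track of which remainder maps are central-valued. Checking the consistency of the single coefficient $z$ across both diagonal blocks through $\varphi$, and verifying that the leftover module-valued maps genuinely fit into $\mu$ and $\nu$, is the delicate bookkeeping on which the whole argument turns.
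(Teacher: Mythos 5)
Your plan is essentially the paper's own proof: the paper likewise decomposes $\mathfrak{T}_{\mathfrak q}$ into Peirce-component bilinear maps $f_{ij},g_{ij},h_{ij},k_{ij}$, splits the centralizing condition into vanishing off-diagonal parts and a central diagonal part, pins the components down by polarization and specialization of arguments (using $2$-torsionfreeness and the loyalty Lemmas \ref{xxsec3.1}--\ref{xxsec3.3}), applies hypothesis (1) only to linear slices such as $a_1\longmapsto f_{12}(a_1,a_2)$, recovers the quadratic diagonal terms $f_{11},k_{44}$ from the off-diagonal equations via faithfulness of $M$, matches the central coefficients through $\varphi$, and reassembles everything into $zx^2+\mu(x)x+\nu(x)$. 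The obstacle you single out --- extracting $zG^2$ so that only commuting \emph{linear} maps on $A$ and $B$ are ever needed --- is precisely what the paper's Lemmas \ref{xxsec3.8}--\ref{xxsec3.19} carry out in detail.
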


For convenience, let us write $A_1=A$, $A_2=M$, $A_3=N$ and $A_4=B$.
We denote the identity of $A_{1}$ by $1$ and the identity of $A_{4}$ by
$1^\prime$. Suppose that ${\mathfrak T}_{\mathfrak q}$ is the trace of the $\mathcal{R}$-bilinear mapping ${\mathfrak q}$. Then there exist
$\mathcal{R}$-bilinear mappings $f_{ij}: A_i\times A_j\rightarrow
A_1$, $g_{ij}: A_i\times A_j\rightarrow A_2$, $h_{ij}: A_i\times
A_j\rightarrow A_3$ and $k_{ij}: A_i\times A_j\rightarrow A_4$,
$1\leqslant i\leqslant j\leqslant 4$, such that
$$\begin{aligned}
{\mathfrak T}_{\mathfrak q}: \mathcal{G}& \longrightarrow \mathcal{G}\\
\left[
\begin{array}
[c]{cc}%
a_1 & a_2\\
a_3 & a_4\\
\end{array}
\right]& \longmapsto \left[
\begin{array}
[c]{cc}%
F(a_1,a_2,a_3,a_4) & G(a_1,a_2,a_3,a_4)\\
H(a_1,a_2,a_3,a_4) & K(a_1,a_2,a_3,a_4)\\
\end{array}
\right], \hspace{3pt} \forall \left[\begin{array}
[c]{cc}%
a_1 & a_2\\
a_3 & a_4\\
\end{array}\right]\in \mathcal{G},
\end{aligned}
$$
where
$$
F(a_1,a_2,a_3,a_4)=\sum_{1\leqslant i\leqslant j\leqslant
4}f_{ij}(a_i,a_j),
$$
$$ G(a_1,a_2,a_3,a_4)=\sum_{1\leqslant i\leqslant
j\leqslant 4}g_{ij}(a_i,a_j),
$$
$$
H(a_1,a_2,a_3,a_4)=\sum_{1\leqslant i\leqslant j\leqslant
4}h_{ij}(a_i,a_j),
$$
$$ K(a_1,a_2,a_3,a_4)=\sum_{1\leqslant i\leqslant
j\leqslant 4}k_{ij}(a_i,a_j).
$$
Since ${\mathfrak T}_{\mathfrak q}$ is centralizing, we have
$$\begin{aligned}
 & \left[\left[
\begin{array}
[c]{cc}%
F & G\\
H & K\\
\end{array}
\right],\left[
\begin{array}
[c]{cc}%
a_1 & a_2\\
a_3 & a_4\\
\end{array}
\right]\right]\\
=&\left[
\begin{array}
[c]{cc}%
Fa_1+Ga_3-a_1F-a_2H & Fa_2+Ga_4-a_1G-a_2K\\
Ha_1+Ka_3-a_3F-a_4H & Ha_2+Ka_4-a_3G-a_4K\\
\end{array}
\right]\in \mathcal{Z(G)}
\end{aligned} \eqno(3.1)
$$
for all $\left[
\begin{array}
[c]{cc}%
a_1 & a_2\\
a_3 & a_4\\
\end{array}
\right]\in \mathcal{G}$.

We notice that the proof of Theorem \ref{xxsec3.4} is similar with
that of \cite[Theorem 3.4]{FosnerWeiXiao}, but with much more complicated computational process.
Now we divide the proof of Theorem \ref{xxsec3.4}
to a series of lemmas for comfortable reading.

\begin{lemma}\label{xxsec3.5}
If $K\colon A_{4}\times A_{4}\longrightarrow A_{3}$ {\rm (}resp. $K\colon A_{1}\times A_{1}\longrightarrow A_{2}${\rm )}
is an $R$-bilinear mapping with $xK(x,x)=0$ {\rm (}resp. $xK(x,x)=0${\rm )} for all $x\in A_4$
{\rm (}resp. for all $x\in A_1${\rm )}, then $K(x,x)=0$. 
\end{lemma}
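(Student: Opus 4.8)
The plan is to prove the statement by polarizing the hypothesis and then specializing the resulting bilinear identity at the unit of $A_4$. I will treat only the first case $K\colon A_4\times A_4\to A_3$, since the second one is entirely symmetric: one simply interchanges the unit $1'$ of $A_4$ with the unit $1$ of $A_1$ and the left $A_4$-module $A_3$ with the left $A_1$-module $A_2$. Recall that the product $xK(x,x)$ appearing in the hypothesis is formed through the left $A_4$-module structure of $A_3$, and that $1'$ acts as the identity on $A_3$ from the left.

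First I would linearize the hypothesis. Replacing $x$ by $x+y$ in $xK(x,x)=0$ and subtracting the two pure relations $xK(x,x)=0$ and $yK(y,y)=0$ yields
\begin{equation*}
xK(x,y)+xK(y,x)+xK(y,y)+yK(x,x)+yK(x,y)+yK(y,x)=0
\end{equation*}
for all $x,y\in A_4$. Next I would replace $y$ by $-y$ and subtract the resulting relation from the one above; the terms of degree two in $y$ cancel while those of degree one double. Since $\mathcal{G}$ is $2$-torsionfree, so is the component $A_3$ in which these identities take values, and I may cancel the factor $2$, obtaining the homogeneous identity
\begin{equation*}
xK(x,y)+xK(y,x)+yK(x,x)=0,\qquad \forall x,y\in A_4, \tag{$\dagger$}
\end{equation*}
which is the key tool.

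The final step is to feed $1'$ into $(\dagger)$ twice. Putting $x=1'$ gives $K(1',y)+K(y,1')=-yK(1',1')$, and $K(1',1')=0$ already follows from the hypothesis evaluated at $x=1'$; hence $K(1',y)+K(y,1')=0$ for all $y\in A_4$. Putting $y=1'$ instead gives $K(x,x)=1'K(x,x)=-x\bigl(K(x,1')+K(1',x)\bigr)$, and the bracket vanishes by the previous sentence, so $K(x,x)=0$. The computation is routine throughout; the only delicate points are the degree separation that extracts $(\dagger)$, where $2$-torsionfreeness is indispensable, and the consistent bookkeeping of the one-sided module actions, in particular the identities $1'K(x,x)=K(x,x)$ (resp. $1\,K(x,x)=K(x,x)$) that let the unit disappear. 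I do not anticipate any essential obstacle beyond these.
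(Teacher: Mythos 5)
Your proof is correct and takes essentially the same route as the paper's: both arguments linearize the quadratic identity $xK(x,x)=0$, use $2$-torsionfreeness to separate degrees, and evaluate at the unit $1'$ to obtain first $K(1',1')=0$, then $K(1',x)+K(x,1')=0$, and finally $K(x,x)=0$. The only cosmetic difference is that you polarize with a general $y$ and then specialize, whereas the paper substitutes $x\pm 1'$ directly into the hypothesis, which amounts to the same computation.
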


\begin{proof}
Setting $x=1'$, we can obtain that $K(1',1')=0$.
Replacing $x$ by $x+1'$ in $K(x,x)x=0$ gives
$$
K(x,x)=-(1'+x)(K(1',x)+K(x,1')). \eqno(3.2)
$$
Substituting $x-1^\prime$ for $x$ in $K(x,x)x=0$,
we have
$$
K(x,x)=(1'-x)(K(1',x)+K(x,1')).  \eqno(3.3)
$$
Comparing the above two relations we get $(K(1',x)+K(x,1'))=0$. And hence $K(x,x)=0$.
\end{proof}

\begin{lemma}\label{xxsec3.6}{\rm(\cite[Lemma 3.5]{XiaoWei2})}
For any $a_1\in A_1, a_2\in A_2, a_3\in A_3\in A_3, a_4\in A_4$, we have 
$$
H(a_{1},a_{2},a_{3},a_{4})=h_{13}(a_{1},a_{3})+ h_{23}(a_{2},a_{3})
+ h_{33}(a_{3},a_{3})+ h_{34}(a_{3},a_{4}). \eqno(3.4)
$$
\end{lemma}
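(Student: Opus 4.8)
The plan is to read off the single matrix-entry identity that the centralizing hypothesis forces on the $N$-component $H$, and then to strip away the six unwanted summands one at a time. Since the excerpt records that $\mathcal{Z(G)}$ consists only of diagonal matrices $a\oplus b$, the commutator displayed in $(3.1)$, being central, must have vanishing off-diagonal entries. Reading off its $(2,1)$-entry gives the governing relation
$$
Ha_1+Ka_3-a_3F-a_4H=0 \eqno(\ast)
$$
for all $a_1\in A_1,\ a_2\in A_2,\ a_3\in A_3,\ a_4\in A_4$ (here $F,H,K$ abbreviate the traces above). The four claimed survivors $h_{13},h_{23},h_{33},h_{34}$ are precisely the summands of $H$ carrying the argument $a_3$, so the goal is to kill $h_{11},h_{12},h_{14},h_{22},h_{24},h_{44}$. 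These are exactly the terms remaining when $a_3=0$, so I would specialize $(\ast)$ to $a_3=0$, obtaining
$$
\widetilde H a_1-a_4\widetilde H=0,\qquad \widetilde H:=H(a_1,a_2,0,a_4). \eqno(\ast\ast)
$$

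I would then extract the summands from $(\ast\ast)$ by further specializing $a_2$ and $a_4$, using $2$-torsionfreeness, the identities $1\in A_1$ and $1'\in A_4$, and the unital module actions $n\cdot 1=n$, $1'\cdot n=n$. Setting $a_2=a_4=0$ reduces $(\ast\ast)$ to $h_{11}(a_1,a_1)a_1=0$, which gives $h_{11}=0$ by the right-handed analogue of Lemma \ref{xxsec3.5} (the same $x\mapsto x\pm 1$ substitution); dually $a_1=a_2=0$ yields $a_4h_{44}(a_4,a_4)=0$, so $h_{44}=0$ directly by Lemma \ref{xxsec3.5}. Putting $a_4=0$ and using $h_{11}=0$ leaves $[h_{12}(a_1,a_2)+h_{22}(a_2,a_2)]a_1=0$; comparing this with its value at $-a_2$ and invoking $2$-torsionfreeness splits it into $h_{22}(a_2,a_2)a_1=0$ and $h_{12}(a_1,a_2)a_1=0$, after which evaluating at $a_1=1$ (respectively the $a_1\mapsto a_1+1$ trick) forces $h_{22}=0$ and $h_{12}=0$. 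Symmetrically, $a_1=0$ together with $h_{22}=h_{44}=0$ gives $a_4h_{24}(a_2,a_4)=0$, whence $h_{24}=0$ by the left-handed version of the same argument (using $1'$).

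The last and most delicate term is $h_{14}$, which I expect to be the main obstacle: after the five preceding vanishings, $(\ast\ast)$ collapses to
$$
h_{14}(a_1,a_4)a_1-a_4h_{14}(a_1,a_4)=0,
$$
and here the two summands have different degrees in $a_1$, so no single specialization isolates $h_{14}$. I would linearize in $a_1$ (replace $a_1$ by $a_1+b_1$ and subtract the two instances) to get $h_{14}(a_1,a_4)b_1+h_{14}(b_1,a_4)a_1=0$. Taking $b_1=1$ yields $h_{14}(a_1,a_4)=-h_{14}(1,a_4)a_1$; feeding this back into the linearized identity produces $h_{14}(1,a_4)(a_1b_1+b_1a_1)=0$, and the choice $b_1=1$ with $2$-torsionfreeness forces $h_{14}(1,a_4)a_1=0$, hence $h_{14}(1,a_4)=0$ and finally $h_{14}=0$. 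With all six summands eliminated, $(3.4)$ follows. It is worth noting that this argument uses only $2$-torsionfreeness and the unital actions, not the loyalty of $M$, which enters elsewhere in the proof of Theorem \ref{xxsec3.4}.
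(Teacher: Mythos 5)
Your proof is correct, and it is essentially the paper's own approach: the paper gives no proof of this lemma (it simply cites \cite[Lemma 3.5]{XiaoWei2}), but your governing identity $(\ast)$ is exactly the relation the paper records later as $(3.20)$, and your term-by-term elimination via the specializations $a_i=0$, the sign substitution $a_2\mapsto -a_2$ with $2$-torsionfreeness, and the $x\mapsto x\pm 1$ (resp. $x\pm 1'$) trick of Lemma \ref{xxsec3.5} is precisely the technique used throughout Lemmas \ref{xxsec3.8} and \ref{xxsec3.11}. You also correctly supplied the one step the citation glosses over, namely that the centralizing (rather than commuting) hypothesis still forces $(\ast)$, since central elements of $\mathcal{G}$ are diagonal and hence the $(2,1)$-entry of the commutator in $(3.1)$ must vanish.
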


\begin{lemma}\label{xxsec3.7} {\rm(\cite[Lemma 3.6]{XiaoWei2})}
For any $a_1\in A_1, a_2\in A_2, a_3\in A_3\in A_3, a_4\in A_4$, we have
$$
G(a_{1},a_{2},a_{3},a_{4})=g_{12}(a_{1},a_{2})+ g_{22}(a_{2},a_{2})
+ g_{23}(a_{2},a_{3})+ g_{24}(a_{2},a_{4}). \eqno(3.5)
$$
\end{lemma}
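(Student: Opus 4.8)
The plan is to read off the vanishing of the ``wrong'' coefficients of $G$ directly from the centralizing hypothesis, in complete parallel with the companion statement Lemma \ref{xxsec3.6} for $H$ but with the two off-diagonal corners of $\mathcal{G}$ interchanged. Since $\mathcal{Z(G)}$ consists only of diagonal matrices, the membership relation (3.1) forces the $(1,2)$-entry of the commutator to be zero, which is the identity
$$
F(a_1,a_2,a_3,a_4)a_2 + G(a_1,a_2,a_3,a_4)a_4 - a_1 G(a_1,a_2,a_3,a_4) - a_2 K(a_1,a_2,a_3,a_4) = 0
$$
for all $a_i\in A_i$. First I would specialize $a_2=0$: this kills the $Fa_2$ and $a_2K$ terms and leaves $G(a_1,0,a_3,a_4)a_4 = a_1 G(a_1,0,a_3,a_4)$. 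Here $G(a_1,0,a_3,a_4)$ is exactly the sum of the six coefficients of $G$ \emph{not} carrying the index $2$, namely $g_{11},g_{13},g_{14},g_{33},g_{34},g_{44}$, so it suffices to prove that each of these six traces vanishes.

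The bulk of the work is then a sequence of specializations of the single relation $G(a_1,0,a_3,a_4)a_4 = a_1 G(a_1,0,a_3,a_4)$. Setting $a_1=0$ collapses the right-hand side to give $\bigl(g_{33}(a_3,a_3)+g_{34}(a_3,a_4)+g_{44}(a_4,a_4)\bigr)a_4=0$; specializing further to $a_3=0$ leaves $g_{44}(a_4,a_4)a_4=0$, whence $g_{44}=0$ by the same polarization argument as in Lemma \ref{xxsec3.5} (applied on the right), after which a polarization in $a_4$ and then in $a_3$, combined with $2$-torsionfreeness, forces $g_{34}=0$ and $g_{33}=0$. Symmetrically, setting $a_4=0$ gives $a_1\bigl(g_{11}(a_1,a_1)+g_{13}(a_1,a_3)+g_{33}(a_3,a_3)\bigr)=0$; using $g_{33}=0$ and then $a_3=0$ this becomes $a_1 g_{11}(a_1,a_1)=0$, so $g_{11}=0$ directly by Lemma \ref{xxsec3.5}, and the leftover $a_1 g_{13}(a_1,a_3)=0$ yields $g_{13}=0$ after the substitution $a_1\mapsto a_1+1$.

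The one genuinely delicate point, which I expect to be the main obstacle, is the mixed coefficient $g_{14}$: it survives neither one-sided specialization because it depends on both $a_1$ and $a_4$. Once the other five coefficients are known to vanish, the relation reduces to $g_{14}(a_1,a_4)a_4 = a_1 g_{14}(a_1,a_4)$, with the unknown appearing on both sides. The idea is to extract its shape by polarizing separately in $a_1$ (substituting $a_1\mapsto a_1+1$) and in $a_4$ (substituting $a_4\mapsto a_4+1'$); these two computations pin $g_{14}$ down to the bimodule form $g_{14}(a_1,a_4)=a_1\,c\,a_4$ with $c=g_{14}(1,1')\in M$. Feeding this back into the reduced relation and setting $a_1=1$ gives $c\,a_4^2=c\,a_4$, so a final polarization in $a_4$ together with $2$-torsionfreeness forces $c\,a_4=0$ and hence $c=0$, i.e.\ $g_{14}=0$. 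Having eliminated $g_{11},g_{13},g_{14},g_{33},g_{34},g_{44}$, only $g_{12},g_{22},g_{23},g_{24}$ survive, which is precisely (3.5). Alternatively, the whole argument is the mirror image of the proof of Lemma \ref{xxsec3.6}, obtained by interchanging $A\leftrightarrow B$, $M\leftrightarrow N$ and the two off-diagonal entries of (3.1).
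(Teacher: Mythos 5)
Your proof is correct. Note that the paper itself gives no argument for this lemma: it is quoted verbatim from \cite[Lemma 3.6]{XiaoWei2}, so there is no in-paper proof to compare against; what can be said is that your route uses exactly the ingredients the paper deploys elsewhere. The vanishing of the $(1,2)$-entry of $(3.1)$ is precisely the paper's identity $(3.31)$, and your eliminations of $g_{44}, g_{33}, g_{34}, g_{11}, g_{13}$ are the same polarization-with-identity arguments as in Lemma \ref{xxsec3.5}, with $2$-torsionfreeness entering exactly where you say (and, consistently with your mirror-symmetry remark, no faithfulness or loyalty of $M$ is needed anywhere). I checked the one step you flagged as delicate, and it does go through: substituting $a_1\mapsto a_1+1$ into $g_{14}(a_1,a_4)a_4=a_1g_{14}(a_1,a_4)$ and using the specialization $a_1=1$ gives $g_{14}(a_1,a_4)=-a_1g_{14}(1,a_4)$, while the symmetric substitution $a_4\mapsto a_4+1'$ gives $g_{14}(a_1,a_4)=-g_{14}(a_1,1')a_4$; combining the two yields your bimodule form $a_1ca_4$ with $c=g_{14}(1,1')$, and feeding it back kills $c$. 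In fact you can finish faster at that point: setting $a_1=1$ in $g_{14}(a_1,a_4)=-a_1g_{14}(1,a_4)$ already gives $2g_{14}(1,a_4)=0$, hence $g_{14}\equiv 0$ by $2$-torsionfreeness, with no need for the bimodule form or the final polarization in $a_4$.
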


\begin{lemma}\label{xxsec3.8}
With the notations as above, we have

\begin{enumerate}

\item[(1)]
$a_{1}\longmapsto f_{11}(a_{1},a_{1})$ is a commuting trace,\\
$a_{1}\longmapsto f_{12}(a_{1},a_{2})$ is a commuting linear mapping for each $a_{2}\in A_{2}$,\\
$a_{1}\longmapsto f_{13}(a_{1},a_{3})$ is a commuting linear mapping for each $a_{3}\in A_{3}$,\\
$a_{4}\longmapsto k_{24}(a_{2},a_{4})$ is a commuting linear mapping for each $a_{2}\in A_{2}$,\\
$a_{4}\longmapsto k_{34}(a_{3},a_{4})$ is a commuting linear mapping for each $a_{3}\in A_{3}$,\\
$a_{4}\longmapsto k_{44}(a_{4},a_{4})$ is a commuting trace.

\item[(2)]
$[f_{14}(a_{1},a_{4}),a_{1}]=\varphi^{-1}([k_{11}(a_{1},a_{1}),a_{4}])\in \mathcal{Z}(A_{1})$, \\
$[f_{24}(a_{2},a_{4}),a_{1}]=\varphi^{-1}([k_{12}(a_{1},a_{2}),a_{4}])\in \mathcal{Z}(A_{1})$, \\
$[f_{34}(a_{3},a_{4}),a_{1}]=\varphi^{-1}([k_{13}(a_{1},a_{3}),a_{4}])\in \mathcal{Z}(A_{1})$, \\
$[f_{44}(a_{4},a_{4}),a_{1}]=\varphi^{-1}([k_{14}(a_{1},a_{4}),a_{4}])\in \mathcal{Z}(A_{1})$.

\item[(3)]
$f_{22}(a_{2},a_{2})\in \mathcal{Z}(A_{1})$ and $f_{33}(a_{3},a_{3})\in \mathcal{Z}(A_{1})$, \\
$k_{22}(a_{2},a_{2})\in \mathcal{Z}(A_{4})$ and $k_{33}(a_{3},a_{3})\in \mathcal{Z}(A_{4})$.

\item[(4)]
$g_{24}(a_{2},a_{4})a_{3}-a_{2}h_{34}(a_{3},a_{4})=\varphi^{-1}([k_{23}(a_{2},a_{3}),a_{4}]
-a_{3}g_{24}(a_{2},a_{4})+h_{34}(a_{3},a_{4})a_{2})$,\\
$h_{13}(a_{1},a_{3})a_{2}-a_{3}g_{12}(a_{1},a_{2})=\varphi([f_{23}(a_{2},a_{3}),a_{1}]
+g_{12}(a_{1},a_{2})a_{3}-a_{2}h_{13}(a_{1},a_{3}))$.
\end{enumerate}
\end{lemma}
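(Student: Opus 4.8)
The plan is to read off every assertion as a specific multihomogeneous component of the matrix identity $(3.1)$. Membership in $\mathcal{Z(G)}$ forces the two off-diagonal entries of $(3.1)$ to vanish and the two diagonal entries to form a central element $\alpha\oplus\beta\in\mathcal{Z(G)}$; by the discussion preceding Lemma \ref{xxsec3.1} we have $\beta=\varphi(\alpha)$, and by hypothesis (2) both $\alpha\in\mathcal{Z}(A)$ and $\beta\in\mathcal{Z}(B)$. Thus I work only with the two diagonal identities
\begin{equation*}
Fa_{1}+Ga_{3}-a_{1}F-a_{2}H=\alpha,\qquad Ha_{2}+Ka_{4}-a_{3}G-a_{4}K=\beta=\varphi(\alpha),
\end{equation*}
abbreviated (I) and (IV). Feeding in the reduced shapes of $G$ and $H$ from Lemmas \ref{xxsec3.6} and \ref{xxsec3.7} (so that $G$ involves only $a_{2}$ and $H$ only $a_{3}$) keeps the expansion manageable.

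Next I would extract multihomogeneous pieces. Since $(3.1)$ is homogeneous of total degree $3$ in $(a_{1},a_{2},a_{3},a_{4})$, I first set the unwanted variables equal to $0$ to discard the irrelevant families $f_{ij},g_{ij},h_{ij},k_{ij}$, and then separate the surviving degrees in the remaining variables by the usual substitution $a_{i}\mapsto-a_{i}$ together with $2$-torsionfreeness. Each assertion is then a single such component, and (each multihomogeneous part of a central element being again central) the component of (I) is paired with the matching component of (IV) through $\beta=\varphi(\alpha)$. For instance, putting $a_{2}=a_{3}=a_{4}=0$ collapses (I) to $[f_{11}(a_{1},a_{1}),a_{1}]=\alpha$ and (IV) to $\beta=0$; keeping only $a_{1},a_{2}$, the bidegree $(2,1)$ part of (I) is $[f_{12}(a_{1},a_{2}),a_{1}]$ and the bidegree $(1,2)$ part is $[f_{22}(a_{2},a_{2}),a_{1}]$, and so on through the list. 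The relations in (2) arise by keeping $a_{1}$ together with one further variable, while the two identities in (4) are exactly the components of (I) and (IV) of multidegree $(0,1,1,1)$ and $(1,1,1,0)$ respectively.

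The mechanism converting these components into the stated conclusions splits into two cases. When the (IV)-component vanishes identically---which happens precisely when every surviving summand carries a factor killed by the chosen substitution, as in the cases feeding parts (1) and (3)---the isomorphism $\varphi$ forces the corresponding $\alpha$-component to be $0$; this upgrades ``centralizing'' to ``commuting'' and yields $[f_{11}(a_{1},a_{1}),a_{1}]=0$, the linear commuting statements, and the centrality $f_{22}(a_{2},a_{2}),f_{33}(a_{3},a_{3})\in\mathcal{Z}(A)$, together with their $B$-analogues obtained symmetrically by reading the commutator out of (IV) and pairing it against the vanishing component of (I). When instead the (IV)-component is a genuine central element assembled from the $k_{ij}$ (or, in (4), from $k_{ij},g_{ij},h_{ij}$), applying $\varphi^{-1}$ to it produces exactly the right-hand sides displayed in (2) and (4).

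The main labor, and the only real obstacle, is the bookkeeping: for each target identity one must determine precisely which of the families $f_{ij},g_{ij},h_{ij},k_{ij}$ survive a given substitution and contribute to the prescribed multidegree of $Fa_{1}+Ga_{3}-a_{1}F-a_{2}H$ or $Ha_{2}+Ka_{4}-a_{3}G-a_{4}K$, and one must verify that the degree-separation step is legitimate, which is where $2$-torsionfreeness enters. One should also record at each stage that the extracted $\alpha$- and $\beta$-components really lie in $\pi_{A}(\mathcal{Z(G)})=\mathcal{Z}(A)$ and $\pi_{B}(\mathcal{Z(G)})=\mathcal{Z}(B)$, so that the commuting, centrality, and $\varphi^{\pm1}$ assertions are literally the ones claimed.
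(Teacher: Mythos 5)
Your proposal is correct and is essentially the paper's own argument: the paper likewise works only with the diagonal identity $(3.6)$ (your pair (I)/(IV) coupled through $\varphi$), inserts the reduced forms of $G$ and $H$ from Lemmas 3.6 and 3.7, and isolates multihomogeneous components via zero substitutions and the sign flips $a_i\mapsto -a_i$ together with $2$-torsionfreeness, obtaining parts (1)--(4) as exactly the components you list (for instance, $(3.18)$ and $(3.19)$ are your multidegrees $(0,1,1,1)$ and $(1,1,1,0)$). The vanishing-component mechanism you describe for upgrading centralizing to commuting is precisely how the paper derives $[f_{11}(a_{1},a_{1}),a_{1}]=0$, the linear commuting statements, and part (3), while the nonvanishing components paired through $\varphi^{-1}$ give (2) and (4).
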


\begin{proof}
It follows from the equation $(3.1)$ that 
$$
[F,a_{1}]+Ga_{3}-a_{2}H=\varphi^{-1}([K,a_{4}]+Ha_{2}-a_{3}G)  \eqno(3.6)
$$
for all $a_{i}\in A_{i} (1\leq i\leq 4)$.
Taking $a_{2}=0, a_{3}=0$ in $(3.6)$ we obatin
$$
\begin{aligned}
&[f_{11}(a_{1},a_{1})+f_{14}(a_{1},a_{4})+f_{44}(a_{4},a_{4}),a_{1}] \\
&=\varphi^{-1}([k_{11}(a_{1},a_{1})+k_{14}(a_{1},a_{4})+k_{44}(a_{4},a_{4}),a_{4}])
\end{aligned} \eqno(3.7)
$$
for all $a_{1}\in A_{1}, a_{4}\in A_{4}$. Let us choose $a_{4}=0$. Thus
$[f_{11}(a_{1},a_{1}),a_{1}]=0$. Using similar computation, we get $[k_{44}(a_{4},a_{4}),a_{4}]=0$.
Replacing $a_{4}$ by $-a_{4}$ in $(3.7)$ yields that
$$
[f_{14}(a_{1},a_{4}),a_{1}]=\varphi^{-1}([k_{11}(a_{1},a_{1}),a_{4}]) \eqno(3.8)
$$
and
$$
[f_{44}(a_{4},a_{4}),a_{1}]=\varphi^{-1}([k_{14}(a_{1},a_{4}),a_{4}]) \eqno(3.9)
$$
for all $a_{1}\in A_{1}, a_{4}\in A_{4}$. Let us take $a_{1}=0, a_{4}=0$ in $(3.6)$. Then we arrive at that
$$
\begin{aligned}
&(g_{22}(a_2,a_2)+g_{23}(a_2,a_3))a_3-a_2(h_{23}(a_2,a_3)+h_{33}(a_3,a_3)) \\
&=\varphi^{-1}((h_{23}(a_2,a_3)+h_{33}(a_3,a_3))a_2-a_3(g_{22}(a_2,a_2)+g_{23}(a_2,a_3)))
\end{aligned}\eqno(3.10)
$$
for all $a_{1}\in A_{1}, a_{2}\in A_{2}, a_{3}\in A_{3}$. Replacing $a_{2}$ by $-a_{2}$ in $(3.10)$ we have
$$
g_{22}(a_2,a_2)a_3-a_2h_{23}(a_2,a_3)=\varphi^{-1}(h_{23}(a_2,a_3)a_2-a_3g_{22}(a_2,a_2)) \eqno(3.11)
$$
and
$$
g_{23}(a_2,a_3)a_3-a_2h_{33}(a_3,a_3)=\varphi^{-1}(h_{33}(a_3,a_3)a_2-a_3g_{23}(a_2,a_3)) \eqno(3.12)
$$
for all $a_{2}\in A_{2}, a_{3}\in A_{3}, a_{4}\in A_{4}$.

Combining $(3.6)$ with $(3.11)$ and $(3.12)$ we conclude that
$$
\begin{aligned}
&[f_{12}(a_{1},a_{2})+f_{13}(a_{1},a_{3})+f_{22}(a_{2},a_{2})+f_{23}(a_{2},a_{3}) \\
&+f_{24}(a_{2},a_{4})+f_{33}(a_{3},a_{3})+f_{34}(a_{3},a_{4}),a_{1}] \\
&+(g_{12}(a_1,a_2)+g_{24}(a_2,a_4))a_{3}-a_{2}(h_{13}(a_1,a_3)+h_{34}(a_3,a_4)) \\
=&\varphi^{-1}([k_{12}(a_{1},a_{2})+k_{13}(a_{1},a_{3})+k_{22}(a_{2},a_{2}) \\
&+k_{23}(a_{2},a_{3})+k_{24}(a_{2},a_{4})+k_{33}(a_{3},a_{3})+k_{34}(a_{3},a_{4}),a_{4}] \\
&-a_{3}(g_{12}(a_1,a_2)+g_{24}(a_2,a_4))+(h_{13}(a_1,a_3)+h_{34}(a_3,a_4))a_2)
\end{aligned}\eqno(3.13)
$$
for all $a_{1}\in A_{1}, a_{2}\in A_{2}, a_{3}\in A_{3}, a_{4}\in A_{4}$.
Substituting  $-a_{1}$ for $a_{1}$ in $(3.13)$ we get 
$$
\begin{aligned}
&[f_{12}(a_{1},a_{2})+f_{13}(a_{1},a_{3}),a_1]+g_{24}(a_2,a_4)a_{3}-a_{2}h_{34}(a_3,a_4) \\
=&\varphi^{-1}([k_{22}(a_{2},a_{2})+k_{23}(a_{2},a_{3})+k_{24}(a_{2},a_{4})
+k_{33}(a_{3},a_{3})+k_{34}(a_{3},a_{4}),a_{4}] \\
&-a_{3}g_{24}(a_2,a_4)+h_{34}(a_3,a_4)a_2)
\end{aligned}\eqno(3.14)
$$
and
$$
\begin{aligned}
&[f_{22}(a_{2},a_{2})+f_{23}(a_{2},a_{3})+f_{24}(a_{2},a_{4})
+f_{33}(a_{3},a_{3})+f_{34}(a_{3},a_{4}),a_{1}] \\
&+g_{12}(a_1,a_2)a_{3}-a_{2}h_{13}(a_1,a_3) \\
=&\varphi^{-1}([k_{12}(a_{1},a_{2})+k_{13}(a_{1},a_{3}),a_{4}]
-a_{3}g_{12}(a_1,a_2)+h_{13}(a_1,a_3)a_2)
\end{aligned}\eqno(3.15)
$$
for all $a_{1}\in A_{1}, a_{2}\in A_{2}, a_{3}\in A_{3}, a_{4}\in A_{4}$.
Let us take $a_{3}=0, a_{4}=0$; $a_{1}=0, a_{3}=0$; $a_{2}=0, a_{4}=0$;
$a_{1}=0, a_{2}=0$ in $(3.14)$ and $(3.15)$, respectively. Thus we obtain 
$$
\begin{aligned}
&[f_{12}(a_{1},a_{2}),a_1]=0,\quad [f_{22}(a_{2},a_{2}),a_1]=0, \\
&[k_{22}(a_{2},a_{2}),a_4]=0,\quad [k_{24}(a_{2},a_{4}),a_4]=0, \\
&[f_{13}(a_{1},a_{2}),a_3]=0,\quad [f_{33}(a_{3},a_{3}),a_1]=0, \\
&[k_{34}(a_{3},a_{4}),a_4]=0,\quad [k_{33}(a_{3},a_{3}),a_4]=0
\end{aligned}
$$
for all $a_{1}\in A_{1}, a_{2}\in A_{2}, a_{3}\in A_{3}, a_{4}\in A_{4}$.
Let us choose $a_{2}=0, a_{3}=0$ in $(3.15)$, respectively. We have
$$
[f_{34}(a_{3},a_{4}),a_{1}]=\varphi^{-1}([k_{13}(a_{1},a_{3}),a_{4}]) \eqno(3.16)
$$
and
$$
[f_{24}(a_{2},a_{4}),a_{1}]=\varphi^{-1}([k_{12}(a_{1},a_{2}),a_{4}]) \eqno(3.17)
$$
for all $a_{1}\in A_{1}, a_{2}\in A_{2}, a_{3}\in A_{3}, a_{4}\in A_{4}$.
Setting $a_{1}=0$ in $(3.14)$ gives
$$
\begin{aligned}
&g_{24}(a_{2},a_{4})a_3-a_2h_{34}(a_{3},a_{4}) \\
=&\varphi^{-1}([k_{23}(a_{2},a_{3}),a_4]-a_3g_{24}(a_{2},a_{4})+h_{34}(a_{3},a_{4})a_2)
\end{aligned} \eqno(3.18)
$$
for all $a_{2}\in A_{2}, a_{3}\in A_{3}, a_{4}\in A_{4}$. Putting $a_{4}=0$ in $(3.15)$ yields
$$
\begin{aligned}
&[f_{23}(a_{2},a_{3}),a_1]+g_{12}(a_{1},a_2)a_3-a_2h_{13}(a_1,a_3) \\
=&\varphi^{-1}(h_{13}(a_1,a_3)a_2-a_3g_{12}(a_1,a_2))
\end{aligned} \eqno(3.19)
$$
for all $a_{1}\in A_{1}, a_{2}\in A_{2}, a_{3}\in A_{3}$.
\end{proof}

\begin{lemma}{\rm (}\cite[Lemma 3.9]{XiaoWei2}{\rm )}\label{xxsec3.9}
We have $f_{22}(a_2,a_2) \oplus k_{22}(a_2,a_2)\in \mathcal{Z(G)}$ and
$f_{33}(a_3,a_3) \oplus k_{33}(a_3,a_3)\in \mathcal{Z(G)}$ for all 
$a_2\in A_2, a_3\in A_3$.
\end{lemma}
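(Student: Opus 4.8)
The plan is to verify directly that the diagonal element $f_{22}(a_2,a_2)\oplus k_{22}(a_2,a_2)$, and likewise $f_{33}(a_3,a_3)\oplus k_{33}(a_3,a_3)$, meets the two defining conditions of $\mathcal{Z(G)}$. By Lemma \ref{xxsec3.8}(3) the entries already lie in $\mathcal{Z}(A_1)$ and $\mathcal{Z}(A_4)$, so what remains is the cross compatibility $f_{22}(a_2,a_2)\,m=m\,k_{22}(a_2,a_2)$ for every $m\in A_2$ and $a_3\,f_{22}(a_2,a_2)=k_{22}(a_2,a_2)\,a_3$ for every $a_3\in A_3$ (and the analogues for the $(3,3)$--data). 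I would read all of these off from the vanishing of the two off--diagonal blocks of $(3.1)$ evaluated at the matrix $\left[\smallmatrix 0 & a_2\\ a_3 & 0\endsmallmatrix\right]$, isolating the homogeneous components in $a_2$ and $a_3$; since $\mathcal{G}$ is $2$--torsionfree, these components separate by the usual $\pm$ substitutions.

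The $(3,3)$--statement should fall out immediately. Putting $a_1=a_4=0$ and using Lemmas \ref{xxsec3.6} and \ref{xxsec3.7}, the $(1,2)$--block reads $Fa_2-a_2K=0$, and its part of degree $2$ in $a_3$ and degree $1$ in $a_2$ gives $f_{33}(a_3,a_3)\,m=m\,k_{33}(a_3,a_3)$ for all $m\in A_2$, the multiplier $m$ being the free $(1,2)$--entry, independent of $a_3$. Thus $f_{33}(a_3,a_3)\in\mathcal{Z}(A_1)$ has \emph{full} left compatibility with $k_{33}(a_3,a_3)$. Because $A_2$ is faithful as a right $A_4$--module, the central partner of $f_{33}(a_3,a_3)$ supplied by hypothesis $(2)$ is unique and must therefore equal $k_{33}(a_3,a_3)$, whence $f_{33}(a_3,a_3)\oplus k_{33}(a_3,a_3)\in\mathcal{Z(G)}$.

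For the $(2,2)$--data the same blocks yield only asymmetric information. The $(2,1)$--block produces, in degree $2$ in $a_2$, the \emph{full} right compatibility $k_{22}(a_2,a_2)\,a_3=a_3\,f_{22}(a_2,a_2)$ for all $a_3\in A_3$; but the $(1,2)$--block, in degree $3$ in $a_2$, gives only the tied identity $f_{22}(a_2,a_2)\,a_2=a_2\,k_{22}(a_2,a_2)$, i.e.\ left compatibility at the single vector $m=a_2$. Writing $\varphi$ for the isomorphism $\pi_{A}(\mathcal{Z(G)})\to\pi_B(\mathcal{Z(G)})$ and setting $c(a_2)=\varphi(f_{22}(a_2,a_2))-k_{22}(a_2,a_2)\in\mathcal{Z}(A_4)$, these translate into $c(a_2)\,a_3=0$ for all $a_3$ and $a_2\,c(a_2)=0$. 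The goal becomes $c(a_2)=0$, equivalently $A_2\,c(a_2)=0$ by right faithfulness.

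The hard part is exactly this upgrade from $a_2\,c(a_2)=0$ to $A_2\,c(a_2)=0$: in contrast with the $(3,3)$--case, the natural left multiplier of $f_{22}(a_2,a_2)$ is forced to be its own argument (the single $(1,2)$--slot), and the right compatibility cannot pin down the partner because $A_3$ is not assumed faithful. I would therefore polarize the cubic relation $a_2\,c(a_2)=0$; since the symmetrizations $f_{22}(x,y)+f_{22}(y,x)$ and $k_{22}(x,y)+k_{22}(y,x)$ are central (again Lemma \ref{xxsec3.8}(3)), $c(a_2)$ is the trace of a symmetric bilinear map $\widetilde C\colon A_2\times A_2\to\mathcal{Z}(A_4)$, and $2$--torsionfreeness delivers the symmetric identity $m\,\widetilde C(m',m'')+m'\,\widetilde C(m,m'')+m''\,\widetilde C(m,m')=0$. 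Transporting this to $A_1$ via $\varphi^{-1}$ and invoking the loyalty of $M$ together with the noncommutativity of $A$ and $B$ (both guaranteed by hypothesis $(2)$), I would run the Benkovi\v{c}--Eremita argument underlying Lemmas \ref{xxsec3.1} and \ref{xxsec3.2} — precisely as in the triangular case of \cite{FosnerWeiXiao} — to force $\widetilde C\equiv 0$, hence $c(a_2)=0$ and $f_{22}(a_2,a_2)\oplus k_{22}(a_2,a_2)\in\mathcal{Z(G)}$. I expect this loyalty step, not the block bookkeeping, to be the genuine obstacle.
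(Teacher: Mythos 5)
A preliminary remark: the paper never proves this lemma itself --- it is quoted verbatim from \cite[Lemma 3.9]{XiaoWei2} --- so your argument can only be checked against what the surrounding lemmas make available, not against an in-paper proof. Measured that way, your first two paragraphs are correct. For the $(3,3)$-pair: the component of degree one in $a_2$ and degree two in $a_3$ of the vanishing $(1,2)$-block does give $f_{33}(a_3,a_3)m=mk_{33}(a_3,a_3)$ with $m$ free; since $f_{33}(a_3,a_3)\in\mathcal{Z}(A_1)=\pi_A(\mathcal{Z(G)})$ by Lemma \ref{xxsec3.8}(3) and hypothesis (2) of Theorem \ref{xxsec3.4}, and the central partner is unique by right faithfulness of $M$, indeed $k_{33}(a_3,a_3)=\varphi(f_{33}(a_3,a_3))$. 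Your reduction of the $(2,2)$-pair is also correct: one only gets $c(a_2)a_3=0$ and the tied relation $a_2c(a_2)=0$ with $c(a_2)=\varphi(f_{22}(a_2,a_2))-k_{22}(a_2,a_2)\in\mathcal{Z}(A_4)$, and the mirror image of the $(3,3)$-argument is unavailable because $N$ is not assumed faithful (it may even be zero). One pedantic point: defining $\widetilde C$ with trace exactly $c$ requires $\frac12\in\mathcal{R}$, which Theorem \ref{xxsec3.4} does not assume; one should work with the symmetrization $D(x,y)$ satisfying $D(a_2,a_2)=2c(a_2)$, which changes nothing below.

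The gap is your closing step, exactly the one you flag as the genuine obstacle. Lemma \ref{xxsec3.1} applies to identities $f(m)n+g(n)m=0$ in which each coefficient depends on a \emph{single} module variable; in the polarized identity $m\widetilde C(m',m'')+m'\widetilde C(m,m'')+m''\widetilde C(m,m')=0$ every coefficient depends on two of the three variables, so the lemma does not apply as stated. Worse, the substitution mechanism underlying its proof (replace a variable by its right $B$-translate, compare with the translate of the whole identity, and harvest a commutator $[b,b']$ against loyalty) yields nothing here: precisely because the values of $\widetilde C$ are \emph{central}, every such substitution produces an identity that is automatically consistent rather than an obstruction. Noncommutativity of $A$ and $B$ is not the operative ingredient at this point. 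What actually closes the argument is a squaring trick that exploits $a_2c(a_2)=0$ multiplicatively and uses loyalty through Lemma \ref{xxsec3.3}: set $m=m'=a_2$ and $m''=y$ in your identity to get $2a_2\widetilde C(a_2,y)+y\,c(a_2)=0$; multiply on the right by the central element $c(a_2)$ and commute it past $\widetilde C(a_2,y)$ (both lie in $\mathcal{Z}(A_4)$), so that the first term becomes $2\bigl(a_2c(a_2)\bigr)\widetilde C(a_2,y)=0$. This leaves $y\,c(a_2)^2=0$ for all $y\in A_2$, hence $c(a_2)^2=0$ by right faithfulness of $M$; since $c(a_2)\in\pi_B(\mathcal{Z(G)})$ lifts to $\varphi^{-1}(c(a_2))\oplus c(a_2)\in\mathcal{Z(G)}$ whose square is zero, Lemma \ref{xxsec3.3} ($\mathcal{Z(G)}$ is a domain --- this is where loyalty really enters; alternatively apply Lemma \ref{xxsec3.2} to $\varphi^{-1}(c(a_2))$) forces $c(a_2)=0$. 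This two-line device is indeed the one used in the triangular setting of \cite{BenkovicEremita1} and \cite{FosnerWeiXiao}, but it is not the argument underlying Lemma \ref{xxsec3.1}; with it your proof is complete, and without it the decisive step remains unproved.
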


\begin{lemma}\label{xxsec3.10}
There exist linear mapping $\xi\colon A_2\longrightarrow {\mathcal Z}(A_{1})$ and bilinear mapping $\eta\colon A_1\times A_2\longrightarrow {\mathcal Z}(A_{1})$ such that
$f_{12}(a_{1},a_{2})=\xi(a_{2})a_{1}+\eta(a_{1},a_{2})$ for all $a_1\in A_1, a_2\in A_2$.
\end{lemma}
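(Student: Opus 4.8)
The plan is to read the desired decomposition directly off hypothesis~(1) of Theorem~\ref{xxsec3.4}, and then to \emph{upgrade} it from a pointwise statement to genuine (bi)linearity by proving a uniqueness lemma for the proper form. By Lemma~\ref{xxsec3.8}(1), for each fixed $a_2\in A_2$ the map $a_1\longmapsto f_{12}(a_1,a_2)$ is a commuting $\mathcal{R}$-linear mapping on $A_1=A$. Invoking hypothesis~(1), this commuting linear mapping is proper, so there exist an element $\xi(a_2)\in\mathcal{Z}(A_1)$ and an $\mathcal{R}$-linear mapping $\eta(\cdot,a_2)\colon A_1\longrightarrow\mathcal{Z}(A_1)$ with
$$
f_{12}(a_1,a_2)=\xi(a_2)a_1+\eta(a_1,a_2),\qquad\forall\,a_1\in A_1.
$$
At this stage $\xi$ and $\eta$ are defined only pointwise in the second variable, and what remains is to show that $\xi$ is $\mathcal{R}$-linear and that $\eta$ is $\mathcal{R}$-bilinear.

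The crux is a uniqueness statement for this proper decomposition, which is exactly where hypotheses~(2) and~(3) enter. Suppose $za_1+\rho(a_1)=z'a_1+\rho'(a_1)$ for all $a_1\in A_1$, where $z,z'\in\mathcal{Z}(A_1)$ and $\rho,\rho'\colon A_1\longrightarrow\mathcal{Z}(A_1)$ are $\mathcal{R}$-linear. Put $c=z-z'\in\mathcal{Z}(A_1)$; then $ca_1=\rho'(a_1)-\rho(a_1)\in\mathcal{Z}(A_1)$ for every $a_1$. Hence $ca_1$ commutes with every $b_1\in A_1$, and since $c$ is central this forces $c[a_1,b_1]=0$ for all $a_1,b_1\in A_1$. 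By hypothesis~(2) we have $\pi_A(\mathcal{Z(G)})=\mathcal{Z}(A_1)\neq A_1$, so $A_1$ is noncommutative and we may pick $a_0,b_0$ with $[a_0,b_0]\neq 0$. Applying Lemma~\ref{xxsec3.2} with $\alpha=c\in\pi_A(\mathcal{Z(G)})$ and the nonzero element $[a_0,b_0]$ yields $c=0$, whence $z=z'$ and $\rho=\rho'$. Thus the proper decomposition is unique.

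Finally, I would exploit this uniqueness together with the bilinearity of $f_{12}$. For $a_2,a_2'\in A_2$ and $r,s\in\mathcal{R}$, expanding $f_{12}(a_1,ra_2+sa_2')$ in two ways gives
$$
\xi(ra_2+sa_2')a_1+\eta(a_1,ra_2+sa_2')=\bigl(r\xi(a_2)+s\xi(a_2')\bigr)a_1+\bigl(r\eta(a_1,a_2)+s\eta(a_1,a_2')\bigr)
$$
for all $a_1$, where both sides are proper decompositions of one and the same commuting linear mapping. Uniqueness then forces $\xi(ra_2+sa_2')=r\xi(a_2)+s\xi(a_2')$ and $\eta(a_1,ra_2+sa_2')=r\eta(a_1,a_2)+s\eta(a_1,a_2')$. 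Hence $\xi$ is $\mathcal{R}$-linear, and $\eta$---already linear in $a_1$ by construction---is linear in $a_2$ as well, i.e.\ bilinear, which completes the argument. I expect the only genuine obstacle to be the uniqueness step: it is precisely here that the noncommutativity furnished by $\mathcal{Z}(A)\neq A$ and the loyalty of $M$ (through Lemma~\ref{xxsec3.2}) are indispensable, while the passage to (bi)linearity afterwards is routine bookkeeping.
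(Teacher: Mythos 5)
Your proposal is correct and follows essentially the same route as the paper: both obtain the pointwise decomposition from Lemma~\ref{xxsec3.8}(1) together with hypothesis~(1), and both derive linearity of $\xi$ (hence bilinearity of $\eta$) by observing that the discrepancy is a central element annihilating a commutator, which vanishes by noncommutativity of $A$ (hypothesis~(2)) and Lemma~\ref{xxsec3.2} (loyalty, hypothesis~(3)). The only difference is cosmetic: you package this mechanism as a uniqueness lemma for the proper decomposition, whereas the paper runs the same computation inline on $f_{12}(a_1,a_2+b_2)$.
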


\begin{proof}
Since $a_1\longmapsto f_{12}(a_{1},a_{2})$ is commuting linear mapping for each $a_2\in A_2$, there exist mappings $\xi\colon A_2\longrightarrow {\mathcal Z}(A_{1})$
and $\eta\colon A_1\times A_2\longrightarrow {\mathcal Z}(A_{1})$ such that
$$
f_{12}(a_{1},a_{2})=\xi(a_{2})a_{1}+\eta(a_{1},a_{2}),
$$
where $\eta$ is $R$-linear in the first argument for all $a_1\in A_1, a_2\in A_2$. Let us show that $\xi$ is $R$-linear and $\eta$ is $R$-bilinear.
It is easy to check that
$$
\begin{aligned}
f_{12}(a_{1},a_{2}+b_{2})&=\xi(a_2+b_2)a_1+\eta(a_1,a_2+b_2) \\
f_{12}(a_{1},a_{2})+f_{12}(a_{1},b_{2})&=\xi(a_2)a_1+\eta(a_1,a_2)+\xi(b_2)a_1+\eta(a_1,b_2)
\end{aligned}
$$
and
$$
\begin{aligned}
&(\xi(a_2+b_2)-\xi(a_2)-\xi(b_2))a_1 \\
&+\eta(a_1,a_2+b_2)-\eta(a_1,a_1)-\eta(a_1,b_2)=0
\end{aligned}
$$
for all $a_1\in  A_1, a_2, b_2\in  A_2$. Note that $\xi$ and $\eta$ map into ${\mathcal Z}(A_1)$. Thus $(\xi(a_2+b_2)-\xi(a_2)-\xi(b_2))[a_1,b_1]=0$ for all $a_1, b_1\in  A_{1}$, and $a_2, b_2\in  A_2$.
Since $A_1$ is non-commutative, $\xi(a_2+b_2)=\xi(a_2)+\xi(b_2)$.
That is, $\xi\colon A_2\longrightarrow {\mathcal Z}(A_1)$ is an $\mathcal{R}$-linear mapping.
Consequently, $\eta$ is $\mathcal{R}$-linear in the second argument and $\eta\colon A_1\times A_2\longrightarrow {\mathcal Z}(A_1)$ is a bilinear mapping.
\end{proof}

\begin{lemma}\label{xxsec3.11}
$f_{44}(a_4,a_4)\in {\mathcal Z}(A_1)$ and $k_{11}(a_1,a_1)\in {\mathcal Z}(A_4)$.
\end{lemma}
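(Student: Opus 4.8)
The plan is to reduce both assertions to a single observation: an element $d$ of $A$ (resp.\ of $B$) whose commutator $[d,\cdot\,]$ with \emph{every} element lands in the center $\mathcal{Z}(A)$ (resp.\ $\mathcal{Z}(B)$) is automatically central. Lemma~\ref{xxsec3.8}(2) already supplies the hypothesis of this observation for the two traces in question. Its fourth identity reads $[f_{44}(a_4,a_4),a_1]\in\mathcal{Z}(A_1)$ for all $a_1\in A_1$, $a_4\in A_4$, so for each fixed $a_4$ the element $d:=f_{44}(a_4,a_4)$ satisfies $[d,a_1]\in\mathcal{Z}(A)$ for all $a_1\in A$. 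Likewise, applying $\varphi$ to its first identity gives $[k_{11}(a_1,a_1),a_4]=\varphi([f_{14}(a_1,a_4),a_1])\in\pi_B(\mathcal{Z(G)})=\mathcal{Z}(B)$ for all $a_4$, since $[f_{14}(a_1,a_4),a_1]\in\mathcal{Z}(A_1)=\pi_A(\mathcal{Z(G)})$ lies in the domain of $\varphi$; hence $d':=k_{11}(a_1,a_1)$ satisfies $[d',a_4]\in\mathcal{Z}(B)$ for all $a_4\in B$.

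First I would establish the observation for $d\in A$ with $[d,a]\in\mathcal{Z}(A)$ for all $a\in A$. Expanding $[d,ab]$ by the Leibniz rule and using that $[d,a],[d,b]\in\mathcal{Z}(A)$ gives $[d,ab]=[d,a]b+a[d,b]$; as $[d,ab]$ is itself central, commuting this expression with an arbitrary $c\in A$ and pulling the central factors $[d,a]$ and $[d,b]$ out of the brackets produces $[d,a][b,c]+[a,c][d,b]=0$ for all $a,b,c\in A$. Specializing $b=c$ kills the first term (since $[c,c]=0$) and leaves $[a,c][d,c]=0$; recalling that $[d,c]$ is central, this is $[d,c][a,c]=0$ for all $a,c\in A$.

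The hard part will be upgrading the annihilation relation $[d,c][a,c]=0$ to $[d,c]=0$. Here I would split on whether $c$ is central: if $c\in\mathcal{Z}(A)$ then $[d,c]=0$ trivially, while if $c\notin\mathcal{Z}(A)$ I may choose $a$ with $0\neq[a,c]\in A$, and then $[d,c]\in\mathcal{Z}(A)=\pi_A(\mathcal{Z(G)})$ together with $[d,c][a,c]=0$ forces $[d,c]=0$ by Lemma~\ref{xxsec3.2}. Since $A$ is noncommutative by hypothesis~(2), the non-central case genuinely occurs; in all cases $[d,c]=0$, so $d\in\mathcal{Z}(A)$ and therefore $f_{44}(a_4,a_4)\in\mathcal{Z}(A_1)$. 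Note that this argument uses only the noncommutativity~(2) and the loyalty~(3) through Lemma~\ref{xxsec3.2}; neither the $2$-torsionfree hypothesis nor condition~(1) is needed.

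For $d'=k_{11}(a_1,a_1)\in B$ the computation is verbatim with $B$ in place of $A$, the only adjustment being the final annihilation step, as Lemma~\ref{xxsec3.2} is phrased for $A$. From $[d',c][a,c]=0$ with $z:=[d',c]\in\mathcal{Z}(B)$ and $0\neq[a,c]\in B$, I would invoke loyalty of $M$ directly: the relation $\varphi^{-1}(z)\oplus z\in\mathcal{Z(G)}$ gives $\varphi^{-1}(z)m=mz$ for all $m\in M$, so left-multiplying $z[a,c]=0$ by $m$ yields $\varphi^{-1}(z)\,m\,[a,c]=0$, that is $\varphi^{-1}(z)\,M\,[a,c]=0$. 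Loyalty forces $\varphi^{-1}(z)=0$ or $[a,c]=0$, and since $[a,c]\neq0$ we get $\varphi^{-1}(z)=0$, whence $z=0$ by injectivity of $\varphi^{-1}$. Running this over all $c\in B$ gives $d'\in\mathcal{Z}(B)$, i.e.\ $k_{11}(a_1,a_1)\in\mathcal{Z}(A_4)$, completing the proof.
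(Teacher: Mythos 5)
Your proof is correct, but it takes a genuinely different and much shorter route than the paper's. The paper proves this lemma by a long computation with the off-diagonal components of the centralizing identity $(3.1)$: it derives the block equations $(3.20)$ and $(3.31)$, extracts the family of identities $(3.24)$--$(3.38)$, performs the substitution tricks leading to $(3.40)$--$(3.51)$ (using Lemma 3.5 along the way), and finally combines $(3.45)$ with $(3.51)$ to conclude that $\bigl(-[f_{44}(a_4,a_4),a_1]a_1\bigr)\oplus a_4[k_{11}(a_1,a_1),a_4]\in\mathcal{Z(G)}$, from which the same annihilation-plus-Lemma-3.2 finish that you use yields the claim. You bypass all of that machinery: Lemma 3.8(2) already says that the inner derivations $x\mapsto[f_{44}(a_4,a_4),x]$ on $A$ and $y\mapsto[k_{11}(a_1,a_1),y]$ on $B$ take values in the respective centers, and your Leibniz computation ($[d,ab]=[d,a]b+a[d,b]$, commute with $c$, set $b=c$) is exactly the classical argument showing that a derivation with central values satisfies $[d,c][a,c]=0$; the hypothesis $\mathcal{Z}(A)=\pi_A(\mathcal{Z(G)})$ then lets Lemma 3.2 (or, on the $B$-side, loyalty via $\varphi^{-1}(z)M[a,c]=0$, which is a correct surrogate for the missing $B$-version of Lemma 3.2) kill $[d,c]$. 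This isolates the real algebraic principle --- an inner derivation mapping into a center with no zero divisors against nonzero elements must vanish --- and makes the dependence on the hypotheses transparent. What the paper's heavier route buys is not the lemma itself but the stockpile of identities $(3.24)$--$(3.51)$, which are cited by number in the subsequent Lemmas 3.12, 3.13, 3.18, 3.19 and in the endgame of Theorem 3.4; if your proof replaced the paper's, those identities would have to be established elsewhere. Two small remarks: your observation that noncommutativity makes the non-central case ``genuinely occur'' is superfluous, since your case split gives $[d,c]=0$ in both cases regardless; and while your own steps indeed avoid $2$-torsionfreeness, that hypothesis is consumed upstream in the proof of Lemma 3.8(2) (the $a_4\mapsto -a_4$ splitting of $(3.7)$), on which your argument rests.
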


\begin{proof}
It follows from the equation $(3.1)$ that
$$
Ha_{1}+Ka_{3}-a_{3}F-a_{4}H=0. \eqno(3.20)
$$
Let us take $a_{4}=0$ in $(3.20)$. Using Lemma \ref{xxsec3.9} we get
$$
\begin{aligned}
&(h_{13}(a_{1},a_{3})+h_{23}(a_{2},a_{3})+h_{33}(a_{3},a_{3}))a_1\\
+&(k_{11}(a_{1},a_{1})+k_{12}(a_{1},a_{1})+k_{13}(a_{1},a_{3})+k_{23}(a_{2},a_{3}))a_{3} \\
-&a_{3}(f_{11}(a_{1},a_{1})+f_{12}(a_{1},a_{1})+f_{13}(a_{1},a_{3})+f_{23}(a_{2},a_{3}))=0.
\end{aligned} \eqno(3.21)
$$
Replacing $a_1$ by $-a_1$ in $(3.21)$ we get
$$
\begin{aligned}
&h_{13}(a_{1},a_{3})a_1+(k_{11}(a_{1},a_{1})+k_{23}(a_{2},a_{3}))a_{3} \\
-&a_{3}(f_{11}(a_{1},a_{1})+f_{23}(a_{2},a_{3}))=0
\end{aligned} \eqno(3.22)
$$
and
$$
\begin{aligned}
&(h_{23}(a_{2},a_{3})+h_{33}(a_{3},a_{3}))a_1+(k_{12}(a_{1},a_{2})+k_{13}(a_{1},a_{3}))a_{3} \\
-&a_{3}(f_{12}(a_{1},a_{2})+f_{13}(a_{1},a_{3}))=0.
\end{aligned}  \eqno(3.23)
$$
Combining $(3.22)$ with $(3.23)$ yields
$$
h_{23}(a_{2},a_{3})a_1=a_3f_{12}(a_{1},a_{2})-k_{12}(a_{1},a_{2})a_3, \eqno(3.24)
$$
$$
h_{33}(a_{3},a_{3})a_1=a_3f_{13}(a_{1},a_{3})-k_{13}(a_{1},a_{3})a_3, \eqno(3.25)
$$
$$
h_{13}(a_{1},a_{3})a_1=a_3f_{11}(a_{1},a_{1})-k_{11}(a_{1},a_{1})a_3, \eqno(3.26)
$$
$$
a_3f_{23}(a_{2},a_{3})=k_{23}(a_{2},a_{3})a_3. \eqno(3.27)
$$
Taking $a_1=0$ in $(3.20)$ and using similar computational procedures, we obtain
$$
a_4h_{34}(a_{3},a_{4})=k_{44}(a_{4},a_{4})a_3-a_3f_{44}(a_{4},a_{4}), \eqno(3.28)
$$
$$
a_4h_{33}(a_{3},a_{3})=k_{34}(a_{3},a_{4})a_3-a_3f_{34}(a_{3},a_{4}),  \eqno(3.29)
$$
$$
a_4h_{23}(a_{2},a_{3})=k_{24}(a_{2},a_{4})a_3-a_3f_{24}(a_{2},a_{4}),  \eqno(3.30)
$$

In view of the relation $(3.1)$ we have
$$
Fa_{2}+Ga_{4}-a_{1}G-a_{2}K=0. \eqno(3.31)
$$
Considering $(3.31)$ and using similar computational procedures yield
$$
a_1g_{22}(a_{2},a_{2})=f_{12}(a_{1},a_{2})a_2-a_2k_{12}(a_{1},a_{2}), \eqno(3.32)
$$
$$
a_1g_{23}(a_{2},a_{3})=f_{13}(a_{1},a_{3})a_2-a_2k_{13}(a_{1},a_{3}), \eqno(3.33)
$$
$$
a_1g_{12}(a_{1},a_{2})=f_{11}(a_{1},a_{1})a_2-a_2k_{11}(a_{1},a_{1}), \eqno(3.34)
$$
$$
f_{23}(a_{2},a_{3})a_2=a_2k_{23}(a_{2},a_{3}), \eqno(3.35)
$$
$$
g_{24}(a_{2},a_{4})a_4=a_2k_{44}(a_{4},a_{4})-f_{44}(a_{4},a_{4})a_2, \eqno(3.36)
$$
$$
g_{23}(a_{2},a_{3})a_4=a_2k_{34}(a_{3},a_{4})-f_{34}(a_{3},a_{4})a_2, \eqno(3.37)
$$
$$
g_{22}(a_{2},a_{2})a_4=a_2k_{24}(a_{2},a_{4})-f_{24}(a_{2},a_{4})a_2. \eqno(3.38)
$$

The identities $(3.24)-(3.30)$ together with $(3.20)$ imply that
$$
\begin{aligned}
h_{34}(a_{3},a_{4})a_1+k_{14}(a_{1},a_{4})a_3
-a_3f_{14}(a_{1},a_{4})-a_4h_{13}(a_{1},a_{3})=0.
\end{aligned}  \eqno(3.39)
$$
Replacing $a_{3}$ by $a_{4}a_{3}$ in $(3.28)$ and substracting the left  multiplication of $(3.28)$ by $a_{4}$ give $a_{4}(h_{34}(a_{4}a_{3},a_{4})-a_{4}h_{34}(a_{3},a_{4}))=0$.
Let us take $K(x,y)=h_{34}(xa_{3},y)-xh_{34}(a_{3},y)$,
where $x,y\in A_{4}$. We can know that $K(x,y)\colon A_{4}\times A_{4}\longrightarrow A_{3}$ is an $\mathcal{R}$-bilinear mapping
and that $a_{4}K(a_{4},a_{4})=0$.
According to Lemma \ref{xxsec3.5}, we get
$$
h_{34}(a_{4}a_{3},a_{4})=a_{4}h_{34}(a_{3},a_{4}) \eqno(3.40)
$$
for all $a_{3}\in A_{3}, a_{4}\in A_{4}$.

Using $(3.26)$ and those similar computational procedures, we also have
$$
h_{13}(a_{1},a_{3}a_{1})=h_{13}(a_{1},a_{3})a_{1}. \eqno(3.41)
$$
Replacing $a_{3}$ by $a_{4}a_{3}$ in $(3.26)$ and substracting the left multiplcation of  $(3.26)$ by  $a_4$ gives 
$$
(h_{13}(a_{1},a_{4}a_{3})-a_{4}h_{13}(a_{1},a_{3}))a_{1}=-[k_{11}(a_{1},a_{1}),a_{4}]a_{3}\eqno(3.42)
$$
Substituting $a_{4}a_{3}$ for $a_{3}$ in $(3.39)$, substracting the left multiplication of $(3.39)$ by $a_{4}$ and using $(3.40)$ together we conclude
$$
[k_{14}(a_{1},a_{4}),a_{4}]a_{3}=a_{4}(h_{13}(a_{1},a_{4}a_{3})-a_{4}h_{13}(a_{1},a_{3})) \eqno(3.43)
$$
Combining the right multiplication of $(3.43)$ by $a_1$ with the left multiplication of $(3.42)$ by $a_4$ we arrive at
$$
[k_{14}(a_{1},a_{4}),a_{4}]a_{3}a_{1}=(-a_{4}[k_{11}(a_{1},a_{1}),a_{4}])a_3   \eqno(3.44)
$$
By $(3.9)$ and $(3.44)$ it follows that 
$$
a_{3}(-a_{1}[f_{44}(a_{4},a_{4}),a_{1}])=a_{4}[k_{11}(a_{1},a_{1}),a_{4}]a_{3}.  \eqno(3.45)
$$

In a similar way, the equations $(3.32)-(3.38)$ together with $(3.31)$ imply that
$$
g_{12}(a_1,a_2)a_4+f_{14}(a_{1},a_{4})a_2=a_1g_{24}(a_2,a_4)+a_2k_{14}(a_1,a_4) \eqno(3.46)
$$
Replacing $a_{2}$ by $a_{1}a_{2}$ in $(3.34)$ and then substracting the left multiplication of $(3.34)$ by $a_{1}$ we have
$$
g_{12}(a_1,a_1a_2)=a_1g_{12}(a_1,a_2)  \eqno(3.47)
$$
Substituting  $a_{2}a_{4}$ for $a_{2}$ in $(3.36)$ and substracting the right multiplication of $(3.36)$ by $a_{4}$, one get
$$
g_{24}(a_2a_4,a_4)=g_{24}(a_2,a_4)a_4  \eqno(3.48)
$$
Replacing $a_{2}$ by $a_{1}a_{2}$ in $(3.36)$ and substracting the left multiplication of $(3.36)$ by $a_{1}$, we obtain
$$
(g_{24}(a_1a_2,a_4)-a_1g_{24}(a_2,a_4))a_4=(-[f_{44}(a_4,a_4),a_1])a_2  \eqno(3.49)
$$
Substituting  $a_{1}a_{2}$ for $a_{2}$ in $(3.46)$, substracting the left multiplication of $(3.46)$ by $a_{1}$ and using $(3.47)$ together, we obtain
$$
[f_{14}(a_1,a_4),a_1]a_2=a_1(g_{24}(a_1a_2,a_4)-a_1g_{24}(a_2,a_4)). \eqno(3.50)
$$
Combining the left multiplication of $(3.49)$ by $a_1$ and right multiplication of $(3.50)$ by $a_4$, and using $(3.8)$ yields
$$
(-[f_{44}(a_{4},a_{4}),a_{1}])a_{1}a_{2}=a_{2}a_{4}[k_{11}(a_{1},a_{1}),a_{4}].  \eqno(3.51)
$$
Considering the center of generalized matrix algebras and combining $(3.45)$ with $(3.51)$ gives
$$
(-[f_{44}(a_{4},a_{4}),a_{1}]a_{1})\oplus a_{4}[k_{11}(a_{1},a_{1}),a_{4}]\in {\mathcal Z}(\mathcal{G}).
$$
Then $a_{1}[f_{44}(a_{4},a_{4}),a_{1}]\in {\mathcal Z}(A_{1})$ for all $a_{1}\in A_{1}$,
$a_{4}\in A_{4}$. Commuting with $b_{1}\in A_{1}$ gives $[f_{44}(a_{4},a_{4}),a_{1}][a_{1},b_{1}]=0$. If $a_1\notin \mathcal{Z}(A_1)$, then there exists $b_1\in A_1$ such that $[a_1,b_1]\neq 0$. In view of Lemma \ref{xxsec3.2}, we get $[f_{44}(a_4,a_4),a_1]=0$ for all $a_1\in A_1$. That is, $f_{44}(a_{4},a_{4})\in {\mathcal Z}(A_{1})$.
It follows from the relation $(3.9)$ that $[k_{14}(a_{1},a_{4}),a_{4}]=0$. This implies that $a_{4}\longmapsto k_{14}(a_{1},a_{4})$ is a commuting linear mapping for each $a_{1}\in A_{1}$. Similarly, one can show $[k_{11}(a_{1},a_{1}),a_{4}]=0$. That is, $k_{11}(a_{1},a_{1})\in {\mathcal Z}(A_{4})$. By the relation $(3.8)$ we obtain $[f_{14}(a_{1},a_{4}),a_{1}]=0$. 
That is, $a_{1}\longmapsto f_{14}(a_{1},a_{4})$ is a commuting linear mapping for each $a_{4}\in A_{4}$.
\end{proof}

\begin{lemma}\label{xxsec3.12}
$f_{34}(a_{3},a_4)\in \mathcal{Z}(A_1)$  and $k_{13}(a_1,a_3)\in \mathcal{Z}(A_4)$.
\end{lemma}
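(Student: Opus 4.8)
The plan is to run the argument of Lemma~\ref{xxsec3.11} one index inward, now with $f_{34}$ and $k_{13}$ in the role that $f_{44}$ and $k_{11}$ played there. The raw material is already available: the bottom-left entry of $(3.1)$ produced $(3.25)$ and $(3.29)$,
\[
h_{33}(a_{3},a_{3})a_1=a_3f_{13}(a_1,a_3)-k_{13}(a_1,a_3)a_3, \qquad a_4h_{33}(a_{3},a_{3})=k_{34}(a_3,a_4)a_3-a_3f_{34}(a_3,a_4),
\]
while the top-right entry produced the mirror pair $(3.33)$ and $(3.37)$. These four identities couple $f_{34}$ and $k_{13}$ through the two auxiliary traces $h_{33}$ and $g_{23}$, and through $f_{13}$ and $k_{34}$, which we already control: by Lemma~\ref{xxsec3.8}(1) the map $a_1\mapsto f_{13}(a_1,a_3)$ commutes and $a_4\mapsto k_{34}(a_3,a_4)$ commutes, exactly as $f_{11}$ and $k_{44}$ did in Lemma~\ref{xxsec3.11}.

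First I would clear $h_{33}$ and $g_{23}$ out of the picture. Eliminating $h_{33}$ between $(3.25)$ and $(3.29)$ (multiply $(3.25)$ on the left by $a_4$ and insert $(3.29)$), and eliminating $g_{23}$ between $(3.33)$ and $(3.37)$ in the same way, reduces everything to relations among $f_{34},k_{13},f_{13},k_{34}$ and the module products. To tame these I would establish the module-compatibility identities for $h_{33}$ and $g_{23}$ that play the part of $(3.40),(3.41),(3.47),(3.48)$: replacing $a_3$ by $a_4a_3$ (resp. by $a_3a_1$) and subtracting the appropriate one-sided multiple of the original identity produces, after Lemma~\ref{xxsec3.5}, the required commutation of the auxiliary map with the module action. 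Since $h_{33}$ is \emph{quadratic} in $a_3$, unlike the linear-in-$a_3$ maps $h_{13},h_{34},g_{12},g_{24}$ of Lemma~\ref{xxsec3.11}, this step is carried out on the polarization of $(3.25)$ and $(3.29)$ in the variable $a_3$, so that each occurrence of $a_3$ becomes linear and the substitutions apply cleanly.

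Feeding the compatibility identities back and invoking $[f_{13},a_1]=0$ and $[k_{34},a_4]=0$ to kill the intermediary terms, I expect to land on a pair of identities in the spirit of $(3.45)$ and $(3.51)$,
\[
a_3\bigl(-a_1[f_{34}(a_3,a_4),a_1]\bigr)=a_4[k_{13}(a_1,a_3),a_4]\,a_3, \qquad \bigl(-[f_{34}(a_3,a_4),a_1]\bigr)a_1a_2=a_2\,a_4[k_{13}(a_1,a_3),a_4].
\]
By the description of $\mathcal{Z(G)}$ recalled at the start of this section, these two relations say precisely that $\bigl(-[f_{34}(a_3,a_4),a_1]a_1\bigr)\oplus\bigl(a_4[k_{13}(a_1,a_3),a_4]\bigr)\in\mathcal{Z(G)}$, and hence $a_1[f_{34}(a_3,a_4),a_1]\in\mathcal{Z}(A_1)$. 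Now $(3.16)$ already gives $[f_{34}(a_3,a_4),a_1]\in\mathcal{Z}(A_1)=\pi_A(\mathcal{Z(G)})$; commuting $a_1[f_{34}(a_3,a_4),a_1]$ with an arbitrary $b_1\in A_1$ yields $[f_{34}(a_3,a_4),a_1]\,[a_1,b_1]=0$, and since hypothesis~(2) makes $A_1$ noncommutative we may choose $b_1$ with $[a_1,b_1]\neq0$, so Lemma~\ref{xxsec3.2} forces $[f_{34}(a_3,a_4),a_1]=0$, i.e. $f_{34}(a_3,a_4)\in\mathcal{Z}(A_1)$. Finally $(3.16)$ reads $[f_{34}(a_3,a_4),a_1]=\varphi^{-1}([k_{13}(a_1,a_3),a_4])$, so injectivity of the isomorphism $\varphi^{-1}$ turns this vanishing into $[k_{13}(a_1,a_3),a_4]=0$, that is $k_{13}(a_1,a_3)\in\mathcal{Z}(A_4)$.

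The main obstacle is bookkeeping rather than ideas: the quadratic dependence on $a_3$ forces a polarization step that was absent in Lemma~\ref{xxsec3.11}, and one must track carefully which module action sits on which side through the repeated ``substitute a product, then subtract a one-sided multiple'' maneuver, checking at each use of Lemma~\ref{xxsec3.5} that the auxiliary bilinear map really is annihilated by its diagonal argument. This is the heavier computation the authors warn of before the lemmas; no mechanism beyond that of Lemma~\ref{xxsec3.11} is required.
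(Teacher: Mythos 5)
There is a genuine gap, and it sits exactly at the pair of identities you ``expect to land on.'' Unlike their models $(3.45)$ and $(3.51)$, which are multihomogeneous because $f_{44}$ is quadratic in $a_4$ and $k_{11}$ is quadratic in $a_1$, your two target identities have mismatched degrees: each left-hand side is quadratic in $a_1$ and linear in $a_4$, while each right-hand side is linear in $a_1$ and quadratic in $a_4$. Since $\mathcal{G}$ is $2$-torsionfree, such an identity can hold for all $a_1$ only if both sides vanish separately (replace $a_1$ by $-a_1$ and compare), so these ``identities'' are the conclusion of the lemma in disguise; no amount of $(3.40)$--$(3.51)$-style elimination can reach them as an intermediate station. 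The structural reason the Lemma~\ref{xxsec3.11} template does not transfer is that the bridge it needed is absent here, and is not needed: $g_{23}$ occurs in exactly two homogeneous components of $(3.31)$, namely $(3.33)$ and $(3.37)$, and the partner maps there, $f_{13}$ and $k_{34}$, are \emph{already known} to be commuting in the relevant variable by Lemma~\ref{xxsec3.8}(1) --- precisely the knowledge that was unavailable for $f_{14}$, $k_{14}$ in Lemma~\ref{xxsec3.11} and forced the long detour through $(3.46)$--$(3.51)$ there. Consequently your own maneuver collapses: replacing $a_2$ by $a_1a_2$ in $(3.33)$ and subtracting $a_1$ times $(3.33)$ gives $a_1\bigl(g_{23}(a_1a_2,a_3)-a_1g_{23}(a_2,a_3)\bigr)=[f_{13}(a_1,a_3),a_1]a_2=0$, whence $g_{23}(a_1a_2,a_3)=a_1g_{23}(a_2,a_3)$ by the $x\mapsto x\pm1$ argument of Lemma~\ref{xxsec3.5}; feeding this into the defect identity obtained from $(3.37)$ by the same substitution (your analogue of $(3.49)$) yields $[f_{34}(a_3,a_4),a_1]a_2=0$ outright, and faithfulness of $M$ plus $(3.16)$ finish the lemma. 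Your entire endgame --- centrality of $a_1[f_{34}(a_3,a_4),a_1]$, commuting with $b_1$, Lemma~\ref{xxsec3.2}, noncommutativity of $A_1$ --- is never needed, and neither is the $h_{33}$/polarization half of the plan.

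There is also an independent error in the endgame itself: even granting your two identities, they do not ``say precisely'' that $\bigl(-[f_{34}(a_3,a_4),a_1]a_1\bigr)\oplus\bigl(a_4[k_{13}(a_1,a_3),a_4]\bigr)\in\mathcal{Z(G)}$. Membership in $\mathcal{Z(G)}$ requires $n\alpha=\beta n$ for \emph{every} $n\in N$ with $\alpha,\beta$ fixed, but in your first identity $a_3$ is not a free module variable: the same $a_3$ sits inside $f_{34}(a_3,a_4)$ and $k_{13}(a_1,a_3)$. Moreover nothing can ever be extracted from the $N$-side, since no faithfulness is assumed on $N$ (it may be zero); only the $M$-side relation, combined with faithfulness of $M$ exactly as in the paper's computation of $\mathcal{Z(G)}$ at the start of Section 3, could legitimately deliver $-[f_{34}(a_3,a_4),a_1]a_1\in\mathcal{Z}(A_1)$. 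For comparison, the paper's actual proof is a short version of the viable core of your plan: set $a_4=1'$ in $(3.16)$ to get $f_{34}(a_3,1')\in\mathcal{Z}(A_1)$, set $a_4=1'$ in $(3.37)$ to get $g_{23}(a_2,a_3)=a_2\omega(a_3)$ with $\omega(a_3)=k_{34}(a_3,1')-\varphi(f_{34}(a_3,1'))$, substitute this back into $(3.37)$, replace $a_2$ by $a_1a_2$ and subtract to obtain $[f_{34}(a_3,a_4),a_1]a_2=0$, then invoke loyalty of $M$ and $(3.16)$. It uses the same three ingredients as the corrected computation above --- the identity $(3.37)$, the $a_2\mapsto a_1a_2$ subtraction, and faithfulness of $M$ --- with unitality doing the work that you tried to assign to Lemma~\ref{xxsec3.11}'s machinery.
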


\begin{proof}
Let us set $a_4=1'$ in $(3.16)$. Then we get $f_{34}(a_3,1')\in \mathcal{Z}(A_{1})$. Taking $a_4=1'$ into $(3.37)$ leads to
$$
g_{23}(a_2,a_3)=a_2\omega(a_3), \eqno(3.52)
$$
where $\omega(a_3)=k_{34}(a_3, 1^\prime)-\varphi(f_{34}(a_3, 1^\prime))$ for all $a_2\in  A_2$, $a_3\in  A_3$.
It follows from $(3.37)$ and $(3.52)$ that
$$
a_2(\omega(a_{3})a_4-k_{34}(a_3,a_4))=-f_{34}(a_{3},a_4)a_{2}.\eqno(3.53)
$$
Replacing $a_2$ by $a_1a_2$ in $(3.53)$ and substracting the left multiplication of $(3.53)$ by $a_1$ yields $[f_{34}(a_{3},a_4),a_1]a_{2}=0$. Note that $M$ is loyal as an $(A, B)$-bimodule. Thus $[f_{34}(a_{3},a_4),a_1]=0$. According to $(3.16)$, we obtain $[k_{13}(a_1,a_3),a_4]=0$.
\end{proof}

\begin{lemma}\label{xxsec3.13}
$f_{24}(a_2,a_4)\in {\mathcal Z}(A_{1})$ and $k_{12}(a_{1},a_{2})\in {\mathcal Z}(A_{4})$.
\end{lemma}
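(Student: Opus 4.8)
The plan is to follow the strategy of Lemma \ref{xxsec3.11} (the $f_{44}/k_{11}$ case) rather than that of Lemma \ref{xxsec3.12}, because the map $(a_2,a_4)\mapsto f_{24}(a_2,a_4)$ is entangled with the module variable through the trace $g_{22}(a_2,a_2)$ and so resists the short loyalty substitution that settled $f_{34}$. First I would normalize at $a_4=1'$: setting $a_4=1'$ in $(3.17)$ and using $[k_{12}(a_1,a_2),1']=0$ gives $f_{24}(a_2,1')\in\mathcal{Z}(A_1)$. Since $a_4\mapsto k_{24}(a_2,a_4)$ is commuting (Lemma \ref{xxsec3.8}(1)), hypothesis $(1)$ makes it proper, say $k_{24}(a_2,a_4)=z(a_2)a_4+\lambda(a_2,a_4)$ with $z(a_2),\lambda(a_2,a_4)\in\mathcal{Z}(A_4)$. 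Evaluating $(3.38)$ at $a_4=1'$ and using $f_{24}(a_2,1')\in\mathcal{Z}(A_1)$ then factors the trace as $g_{22}(a_2,a_2)=a_2\omega(a_2)$, where $\omega(a_2)=k_{24}(a_2,1')-\varphi(f_{24}(a_2,1'))$ lies in $\mathcal{Z}(A_4)$ by hypothesis $(2)$.

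Feeding this factorization back into $(3.38)$ and moving the central factors across $M$ by means of $\varphi$ collapses $(3.38)$ to an identity of the shape
\[
\widetilde f(a_2,a_4)\,a_2=p(a_2)\,a_2a_4,\qquad p(a_2):=f_{24}(a_2,1')-\varphi^{-1}(\lambda(a_2,1'))\in\mathcal{Z}(A_1),
\]
where $\widetilde f(a_2,a_4):=f_{24}(a_2,a_4)-\varphi^{-1}(\lambda(a_2,a_4))$ differs from $f_{24}(a_2,a_4)$ by a central summand, so that $[\widetilde f(a_2,a_4),a_1]=[f_{24}(a_2,a_4),a_1]$. This is the exact analogue of the final $M$-side relation of Lemma \ref{xxsec3.11}, except that here the ``coefficient'' $\widetilde f(a_2,a_4)$ still carries the module label $a_2$. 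The hard part is precisely this entanglement: because $f_{24}(a_1a_2,a_4)\neq a_1f_{24}(a_2,a_4)$ in general, replacing $a_2$ by $a_1a_2$ and subtracting the left $a_1$-multiple no longer isolates $[f_{24}(a_2,a_4),a_1]$, as it did for $f_{34}$ in Lemma \ref{xxsec3.12}; the naive substitution is circular.

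To break the entanglement I would polarize. Linearizing the displayed identity in $a_2$ decouples the label from the loyalty variable: for all $a_2,b_2\in M$ one gets $\widetilde f(a_2,a_4)b_2+\widetilde f(b_2,a_4)a_2=p(a_2)b_2a_4+p(b_2)a_2a_4$, in which $\widetilde f(b_2,a_4)$ now carries a \emph{fixed} label $b_2$ while $a_2$ runs freely. Running the $M$-side chain of Lemma \ref{xxsec3.11} — the analogues of $(3.46)$–$(3.51)$, obtained from $(3.32)$ and $(3.38)$ together with the module-compatibility relations produced as in $(3.47)$–$(3.48)$ (via Lemma \ref{xxsec3.5} and substitution in the module argument) — should yield, after multiplication and cancellation of the polarized defects, a relation $\bigl(-[f_{24}(b_2,a_4),a_1]a_1\bigr)m=m\,d$ valid for \emph{every} $m\in M$, with $d\in A_4$ a fixed element built from $k_{12}$. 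Simultaneously the $N$-side chain from $(3.24)$ and $(3.30)$ (mirroring $(3.40)$–$(3.45)$, using $[k_{24}(a_2,a_4),a_4]=0$ and a substitution in $a_4$ to obtain $h_{23}(a_2,a_4a_3)=a_4h_{23}(a_2,a_3)$) supplies the matching $n\bigl(-[f_{24}(b_2,a_4),a_1]a_1\bigr)=d\,n$ for all $n\in N$. By the description of $\mathcal{Z(G)}$ these two facts together say that $\bigl(-[f_{24}(b_2,a_4),a_1]a_1\bigr)\oplus d\in\mathcal{Z(G)}$, whence $a_1[f_{24}(b_2,a_4),a_1]\in\mathcal{Z}(A_1)$ (when $N=0$ the $N$-side is vacuous and faithfulness of $M$ alone suffices).

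Finally I would close exactly as in Lemma \ref{xxsec3.11}. Since $[f_{24}(b_2,a_4),a_1]\in\mathcal{Z}(A_1)$ by Lemma \ref{xxsec3.8}(2), commuting $a_1[f_{24}(b_2,a_4),a_1]$ with an arbitrary $b_1\in A_1$ gives $[f_{24}(b_2,a_4),a_1]\,[a_1,b_1]=0$; choosing $a_1\notin\mathcal{Z}(A_1)$ so that $[a_1,b_1]\neq0$ and invoking Lemma \ref{xxsec3.2} forces $[f_{24}(b_2,a_4),a_1]=0$, i.e. $f_{24}(a_2,a_4)\in\mathcal{Z}(A_1)$. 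Then $(3.17)$ gives $\varphi^{-1}([k_{12}(a_1,a_2),a_4])=0$, and since $\varphi$ is injective we conclude $k_{12}(a_1,a_2)\in\mathcal{Z}(A_4)$, completing the proof.
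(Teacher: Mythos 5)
Your opening (everything up to and including the polarized identity) and your closing argument (from $a_1[f_{24}(b_2,a_4),a_1]\in\mathcal{Z}(A_1)$ to the conclusion via Lemma \ref{xxsec3.2} and $(3.17)$) are both correct, and your plan is in fact the mirror image of the paper's actual proof: the paper normalizes at $a_1=1$ rather than $a_4=1'$, gets $g_{22}(a_2,a_2)=\psi(a_2)a_2$ from $(3.32)$, uses the properness decomposition of $f_{12}$ from Lemma \ref{xxsec3.10} (hypothesis (1) spent on the $A$ side) where you decompose $k_{24}$ (hypothesis (1) spent on the $B$ side), arrives at the identity $a_1\varphi^{-1}(X(1,a_2))a_2=a_2X(a_1,a_2)$ --- the mirror of your $\widetilde f(a_2,a_4)a_2=p(a_2)a_2a_4$ --- and then proves $[k_{12}(a_1,a_2),a_4]=0$ first, recovering the centrality of $f_{24}$ from $(3.17)$ at the end. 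Two caveats: the additivity in $a_2$ of $z(a_2)$ and $\lambda(a_2,a_4)$, which your polarization tacitly uses, needs a uniqueness-of-decomposition argument as in Lemma \ref{xxsec3.10}; and since you consume hypothesis (1) on $B$ while the paper's Section 3 machinery consumes it on $A$, under a disjunctive reading of (1) your argument covers the complementary case rather than the same one.

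The genuine gap is your middle paragraph. The claim that running ``the $M$-side chain of Lemma \ref{xxsec3.11}'' and ``the $N$-side chain'' \emph{should yield} $\bigl(-[f_{24}(b_2,a_4),a_1]a_1\bigr)m=md$ and $n\bigl(-[f_{24}(b_2,a_4),a_1]a_1\bigr)=dn$ with a single fixed $d$ built from $k_{12}$ is never derived, and that is exactly where the work lies; moreover it is dubious as stated, because $(3.17)$ and hypothesis (2) give $[f_{24}(b_2,a_4),a_1]a_1m=a_1m[k_{12}(a_1,b_2),a_4]$, whose right-hand side carries the factor $a_1m$ rather than $m$, so the claimed shape is not available before the conclusion is already known; also, the chains $(3.40)$--$(3.51)$ concern $f_{14},k_{14},h_{13},h_{34},g_{12},g_{24}$ and do not transcribe mechanically to $f_{24},k_{12},h_{23},g_{22}$. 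Fortunately, neither the $N$-side nor membership in $\mathcal{Z(G)}$ is needed: in your polarized identity replace $b_2$ by $a_1b_2$ and subtract $a_1$ times the polarized identity (the $p(a_2)$-terms cancel since $p(a_2)\in\mathcal{Z}(A_1)$), then set $b_2=a_2$ and use the unpolarized identity to cancel, obtaining $\bigl([\widetilde f(a_2,a_4),a_1]+\widetilde f(a_1a_2,a_4)\bigr)a_2=p(a_1a_2)a_2a_4$; now left-multiply by $a_1$ and use the unpolarized identity at $a_1a_2$ to get $\bigl(a_1[\widetilde f(a_2,a_4),a_1]-[\widetilde f(a_1a_2,a_4),a_1]\bigr)a_2=0$. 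Linearizing this in $a_2$ and applying Lemma \ref{xxsec3.1} ($M$ loyal, $B$ noncommutative by hypothesis (2)) kills the coefficient, so $a_1[f_{24}(a_2,a_4),a_1]=[f_{24}(a_1a_2,a_4),a_1]\in\mathcal{Z}(A_1)$ by Lemma \ref{xxsec3.8}(2), and your final paragraph then finishes the proof verbatim. This completion is precisely the mirror of the paper's computation $(3.57)$--$(3.60)$ together with its auxiliary map $P(m)$, which your sketch replaced by an unexecuted analogy with Lemma \ref{xxsec3.11}.
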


\begin{proof}
Let us set $a_{1}=1$ in $(3.17)$. Then we have $k_{12}(1,a_{2})\in {\mathcal Z}(A_{4})$. Taking $a_{1}=1$ into $(3.32)$ gives
$$
g_{22}(a_2,a_2)=\psi(a_2)a_2, \eqno(3.54)
$$
where $\psi(a_2)=(f_{12}(1,a_2)-\varphi^{-1}(k_{12}(1,a_2)))$ for all $a_2\in  A_2$.
Combining $(3.32)$ with Lemma \ref{xxsec3.10} yields
$$
a_1(\psi(a_2)-\xi(a_2))a_2=a_{2}(\varphi(\eta(a_1,a_2))-k_{12}(a_1,a_2)).\eqno(3.55)
$$

Let us write $Y(a_{2})=\psi(a_2)-\xi(a_2)$, $X(a_1,a_2)=\varphi(\eta(a_1,a_2))-k_{12}(a_{1},a_{2})$.
Setting $a_{1}=1$ in $(3.55)$ leads to $(Y(a_2)-\varphi^{-1}(X(1,a_2)))a_2=0$ for $a_2\in A_2$. We assert that
$$
Y(a_2)=\varphi^{-1}(X(1,a_2)).\eqno(3.56)
$$
In fact, replacing $a_2$ by $m+n$ in $(Y(a_2)-\varphi^{-1}(X(1,a_2)))a_2=0$ we get
$$
(Y(n)-\varphi^{-1}(X(1,n)))m+(Y(m)-\varphi^{-1}(X(1,m)))n=0
$$
for all $m, n\in A_2$. In view of Lemma \ref{xxsec3.1}, we have $Y(n)=\varphi^{-1}(X(1,n))$, $Y(m)=\varphi^{-1}(X(1,m))$
for all $m, n\in A_2$. So the assertion holds. Thus $(3.55)$ can be rewritten as
$$
a_1\varphi^{-1}(X(1,a_2))a_{2}=a_2X(a_{1},a_{2})\eqno(3.57)
$$
for all $a_1\in A_1$, $a_2\in A_2$. Substituting $m+n$ for $a_2$ in $(3.57)$, we obtain
$$
a_1\varphi^{-1}(X(1,n))m+a_1\varphi^{-1}(X(1,m))n=mX(a_1,n)+nX(a_1,m)  \eqno(3.58)
$$
for all $a_{1}\in A_{1}, m,n\in A_2$. Replacing $n$ by $na_4$ in $(3.58)$ and substracting the right multiplication of $(3.58)$ by $a_4$, we arrive at
$$
\begin{aligned}
&a_1\varphi^{-1}(X(1,na_4))m-a_{1}\varphi^{-1}(X(1,n))ma_4 \\
&=n[a_4,X(a_1,m)]+mX(a_1,na_4)-mX(a_1,n)a_4
\end{aligned} \eqno(3.59)
$$
for all $a_{1}\in A_{1}$, $a_4\in A_4$, $m,n\in A_2$. Let us set $m=n$ in $(3.59)$ and using $(3.57)$ together, we have
$$
a_1\varphi^{-1}(X(1,ma_{4}))m=mX(a_1,ma_4)+m[a_4,X(a_1,m)]  \eqno(3.60)
$$
for all $a_{1}\in A_{1}, a_4\in A_4, m\in A_2$. Right multipling by $a_4$ in $(3.60)$ and using $(3.57)$, we get $m[X(a_1,ma_4),a_4]=m[X(a_1,m),a_4]a_4$. That is, 
$$
m([X(a_1,ma_4),a_4]-[X(a_1,m),a_4]a_4)=0
$$
for all $a_{1}\in A_{1}$, $a_4\in A_4$, $m\in A_2$. Let us write $P(m)=[X(a_1,ma_4),a_4]-[X(a_1,m),a_4]a_4$,
where $P\colon A_2\longrightarrow A_4$ is an $R$-linear mapping for each $a_{1}\in A_{1}, a_4\in A_4$. Then $mP(m)=0$. Linearizing the relation $mP(m)=0$ leads to $mP(n)+nP(m)=0$ for all $m,n\in A_2$.
In view of Lemma \ref{xxsec3.1}, we obtain $P(m)=0$. That is, 
$$
[X(a_1,ma_4),a_4]=[X(a_1,m),a_4]a_4
$$
for all $a_{1}\in A_{1}, a_4\in A_4, m\in A_2$. Let us choose $b_4\in A_4$ such that $[a_4,b_4]\neq 0$. By lemma \ref{xxsec3.8} we know that $[X(a_1,ma_4),a_4]=[a_4,k_{12}(a_1,ma_4)]\in \mathcal{Z}(A_4)$. Commuting $[X(a_1,ma_4),a_4]=[X(a_1,m),a_4]a_4$ with $b_4$, we have $[X(a_1,m),a_4][a_4,b_4]=0$.
Thus Lemma 3.2 implies that $[X(a_1,m),a_4]=[a_4,k_{12}(a_1,m)]=0$. This shows that $k_{12}(a_1,a_2)\in {\mathcal Z}(A_4)$. In view of $(3.17)$ we know that $[f_{24}(a_2,a_4),a_1]=0$. That is, $f_{24}(a_2,a_4)\in {\mathcal Z}(A_1)$.
\end{proof}

For the proof of Theorem \ref{xxsec3.4} we need to know the form of $f_{12}(a_1,a_2), k_{24}(a_2,a_4),$ $f_{13}(a_1,a_3), k_{34}(a_3,a_4)$
and $f_{14}(a_1,a_4)$.

\begin{lemma}{\rm (\cite[Lemma 3.10]{XiaoWei2})}\label{xxsec3.14}
$f_{12}(a_1,a_2)=\alpha(a_2)a_1+\varphi^{-1}(k_{12}(a_1,a_2))$ and
$k_{24}(a_2,a_4)=\varphi(\alpha(a_2))a_4+\varphi(f_{24}(a_2,a_4))$,
where $\alpha(a_3)=f_{12}(1,a_2)-\varphi^{-1}(k_{12}(1,a_2))$.
\end{lemma}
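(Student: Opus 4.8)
The plan is to extract both identities from the structural relations $(3.32)$ and $(3.38)$, using only the centrality facts already secured in Lemmas \ref{xxsec3.10}--\ref{xxsec3.13}. First I would record the two ingredients available at this stage: by $(3.54)$ we have $g_{22}(a_2,a_2)=\alpha(a_2)a_2$, since the auxiliary map $\psi$ there is by definition our $\alpha$; and $\alpha(a_2)=\xi(a_2)+\varphi^{-1}(X(1,a_2))$ lies in $\mathcal{Z}(A_1)$ because both summands do (the first by Lemma \ref{xxsec3.10}, the second because $X$ takes values in $\pi_B(\mathcal{Z(G)})$ by Lemma \ref{xxsec3.13}). Centrality of $\alpha(a_2)$ is what will let me transport it across products and switch it between the two sides of the bimodule via $\alpha(a_2)a_2=a_2\varphi(\alpha(a_2))$.

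For the first identity I would begin with $(3.32)$, that is $a_1g_{22}(a_2,a_2)=f_{12}(a_1,a_2)a_2-a_2k_{12}(a_1,a_2)$. Since $k_{12}(a_1,a_2)\in\mathcal{Z}(A_4)=\pi_B(\mathcal{Z(G)})$ by Lemma \ref{xxsec3.13}, I replace $a_2k_{12}(a_1,a_2)$ by $\varphi^{-1}(k_{12}(a_1,a_2))a_2$, and I rewrite the left side as $\alpha(a_2)a_1a_2$ using centrality of $\alpha(a_2)$. Moving everything to one side gives $\bigl(\alpha(a_2)a_1-f_{12}(a_1,a_2)+\varphi^{-1}(k_{12}(a_1,a_2))\bigr)a_2=0$ for all $a_1,a_2$. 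Writing $\Delta(a_1,a_2)\in A_1$ for the bracketed element, this is a trace-type relation in $a_2$; linearizing $\Delta(a_1,a_2)a_2=0$ yields $\Delta(a_1,m)n+\Delta(a_1,n)m=0$ for all $m,n\in A_2$. As $B$ is noncommutative by hypothesis (2), Lemma \ref{xxsec3.1} forces $\Delta(a_1,\cdot)=0$, which is exactly $f_{12}(a_1,a_2)=\alpha(a_2)a_1+\varphi^{-1}(k_{12}(a_1,a_2))$.

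The second identity I would obtain by the mirror computation, now starting from $(3.38)$, namely $g_{22}(a_2,a_2)a_4=a_2k_{24}(a_2,a_4)-f_{24}(a_2,a_4)a_2$. Using $f_{24}(a_2,a_4)\in\mathcal{Z}(A_1)$ from Lemma \ref{xxsec3.13} I rewrite $f_{24}(a_2,a_4)a_2=a_2\varphi(f_{24}(a_2,a_4))$, and the left side becomes $a_2\varphi(\alpha(a_2))a_4$. This produces $a_2\bigl(\varphi(\alpha(a_2))a_4-k_{24}(a_2,a_4)+\varphi(f_{24}(a_2,a_4))\bigr)=0$, an annihilation on the right of the bimodule. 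Linearizing in $a_2$ and applying the left--right dual of Lemma \ref{xxsec3.1} (available because $A$ is noncommutative by hypothesis (2)) annihilates the bracketed element of $B$ and yields the asserted formula for $k_{24}(a_2,a_4)$.

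I expect the only real obstacle to be bookkeeping rather than anything conceptual: one must verify that $\alpha(a_2)$ is genuinely central before sliding it across the module actions, and one must have at hand the right-module analogue of Lemma \ref{xxsec3.1}. The former follows from the decomposition $\alpha=\xi+\varphi^{-1}(X(1,\cdot))$ into central pieces noted above, while the latter holds by the symmetry of the loyalty hypothesis on $M$ together with noncommutativity of $A$; beyond $2$-torsionfreeness no further assumption is needed.
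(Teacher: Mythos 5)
Your proof is correct. The paper itself supplies no argument for this lemma---it is imported from \cite[Lemma 3.10]{XiaoWei2}---but your reconstruction is precisely the standard one and runs on the same fuel as the paper's own proofs of Lemma \ref{xxsec3.13} and Lemma \ref{xxsec3.18}: the relations $(3.32)$ and $(3.38)$, the identification $\psi=\alpha$ from $(3.54)$, the centrality facts $k_{12}(a_1,a_2)\in\mathcal{Z}(A_4)$ and $f_{24}(a_2,a_4)\in\mathcal{Z}(A_1)$ of Lemma \ref{xxsec3.13} together with hypothesis (2) to license the $\varphi$-transport, and the linearize-then-apply-loyalty step via Lemma \ref{xxsec3.1}, whose left--right dual (needed for the $k_{24}$ half, with $A$ noncommutative) the paper itself already invokes without comment in the proof of Lemma \ref{xxsec3.13}.
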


\begin{lemma}{\rm (\cite[Lemma 3.11]{XiaoWei2})}\label{xxsec3.15}
$f_{13}(a_1,a_3)=\tau(a_3)a_1+\varphi^{-1}(k_{13}(a_1,a_3))$ and
$k_{34}(a_3,a_4)=\varphi(\tau(a_3))a_4+\varphi(f_{34}(a_3,a_4))$,
where $\tau(a_3)=f_{13}(1,a_3)-\varphi^{-1}(k_{13}(1,a_3))$.
\end{lemma}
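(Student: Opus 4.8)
The plan is to mirror the argument used for the parallel statement in Lemma \ref{xxsec3.14}, exploiting the two component identities $(3.33)$ and $(3.37)$, which relate $f_{13}$, $k_{13}$, $g_{23}$ on the one side and $g_{23}$, $k_{34}$, $f_{34}$ on the other. The first preparatory step is to confirm that $\tau(a_3)$ is central. By Lemma \ref{xxsec3.8}(1) the map $a_1\longmapsto f_{13}(a_1,a_3)$ is commuting and linear on $A_1$ for each fixed $a_3$, so by hypothesis (1) of Theorem \ref{xxsec3.4} it is proper; evaluating the proper form at $a_1=1$ shows $f_{13}(1,a_3)\in\mathcal{Z}(A_1)$. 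Since $k_{13}(1,a_3)\in\mathcal{Z}(A_4)$ by Lemma \ref{xxsec3.12}, it follows that $\tau(a_3)=f_{13}(1,a_3)-\varphi^{-1}(k_{13}(1,a_3))\in\mathcal{Z}(A_1)$.

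The key auxiliary formula is $g_{23}(a_2,a_3)=\tau(a_3)a_2$. I would obtain it by setting $a_1=1$ in $(3.33)$, which gives $g_{23}(a_2,a_3)=f_{13}(1,a_3)a_2-a_2k_{13}(1,a_3)$; transporting the central element $k_{13}(1,a_3)\in\mathcal{Z}(A_4)$ across the bimodule via $a_2k_{13}(1,a_3)=\varphi^{-1}(k_{13}(1,a_3))a_2$ and collecting terms produces exactly $\tau(a_3)a_2$.

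For the first asserted identity I substitute this formula back into the full relation $(3.33)$. Because $\tau(a_3)\in\mathcal{Z}(A_1)$ commutes past $a_1$ and $k_{13}(a_1,a_3)\in\mathcal{Z}(A_4)$ may be transported by $a_2k_{13}(a_1,a_3)=\varphi^{-1}(k_{13}(a_1,a_3))a_2$, the equation rearranges to $\bigl(\tau(a_3)a_1+\varphi^{-1}(k_{13}(a_1,a_3))-f_{13}(a_1,a_3)\bigr)a_2=0$ for every $a_2\in A_2=M$; loyalty, hence left faithfulness, of $M$ forces the bracket to vanish, giving $f_{13}(a_1,a_3)=\tau(a_3)a_1+\varphi^{-1}(k_{13}(a_1,a_3))$. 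For the second identity I feed $g_{23}(a_2,a_3)=\tau(a_3)a_2$ into $(3.37)$; using $\tau(a_3)a_2=a_2\varphi(\tau(a_3))$ and, from Lemma \ref{xxsec3.12}, $f_{34}(a_3,a_4)a_2=a_2\varphi(f_{34}(a_3,a_4))$, the identity becomes $a_2\bigl(\varphi(\tau(a_3))a_4+\varphi(f_{34}(a_3,a_4))-k_{34}(a_3,a_4)\bigr)=0$, and right faithfulness of $M$ yields $k_{34}(a_3,a_4)=\varphi(\tau(a_3))a_4+\varphi(f_{34}(a_3,a_4))$.

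I expect no serious obstacle once Lemmas \ref{xxsec3.8}, \ref{xxsec3.11} and \ref{xxsec3.12} are available: the whole argument is careful bookkeeping of central elements being shuttled across $M$ by $\varphi$ and $\varphi^{-1}$, followed by a faithfulness cancellation. The one point demanding genuine care is verifying that $\tau(a_3)$ is central \emph{before} moving it through $a_2$, which is precisely why the commuting/proper structure of $a_1\longmapsto f_{13}(a_1,a_3)$ must be invoked at the outset; without centrality of $\tau(a_3)$ the transport identity $\tau(a_3)a_2=a_2\varphi(\tau(a_3))$, and hence the final cancellation, would be unavailable.
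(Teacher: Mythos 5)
Your proof is correct, and it fills a gap the paper deliberately leaves open: Lemma \ref{xxsec3.15} is stated here without proof, being quoted from \cite[Lemma 3.11]{XiaoWei2}, so there is no in-paper argument to compare against. Your reconstruction is exactly in the style of the arguments the paper does carry out for the neighbouring lemmas: set one variable equal to the identity in a component identity to extract a coefficient map, transport central elements across $M$ via $\varphi$, and cancel by faithfulness. Compare the derivation of $(3.52)$ inside the proof of Lemma \ref{xxsec3.12}, which obtains $g_{23}(a_2,a_3)=a_2\omega(a_3)$ from $(3.37)$ with $a_4=1^\prime$ --- the mirror image of your $g_{23}(a_2,a_3)=\tau(a_3)a_2$ obtained from $(3.33)$ with $a_1=1$ --- and the proof of Lemma \ref{xxsec3.18}, which runs the same scheme for $g_{12}$ and $g_{24}$. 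All the ingredients you invoke are legitimately available at this stage: $(3.33)$, $(3.37)$, Lemma \ref{xxsec3.8}(1), Lemma \ref{xxsec3.12}, hypothesis (2) of Theorem \ref{xxsec3.4} (needed, and implicitly used by you, to identify $\mathcal{Z}(A_1)$ with $\pi_A(\mathcal{Z(G)})$ and $\mathcal{Z}(A_4)$ with $\pi_B(\mathcal{Z(G)})$, so that $\varphi$, $\varphi^{-1}$ and the transport identities $zm=m\varphi(z)$, $mw=\varphi^{-1}(w)m$ apply to the elements in question), and faithfulness of $M$ on both sides, which follows from loyalty and is also a standing assumption of Section \ref{xxsec3}.

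One small improvement: you do not need hypothesis (1) of Theorem \ref{xxsec3.4} (properness) to see that $f_{13}(1,a_3)\in\mathcal{Z}(A_1)$, and it is slightly safer not to use it, since that hypothesis is phrased as a disjunction (``every commuting linear mapping on $A$ \emph{or} $B$ is proper'') and therefore need not be available for $A$ specifically. For any commuting linear map $\mathfrak{f}$ on a unital algebra, linearizing $[\mathfrak{f}(a),a]=0$ gives $[\mathfrak{f}(a),b]+[\mathfrak{f}(b),a]=0$, and taking $b=1$ yields $[\mathfrak{f}(1),a]=0$ for all $a$; applied to the commuting map $a_1\longmapsto f_{13}(a_1,a_3)$ of Lemma \ref{xxsec3.8}(1), this gives $f_{13}(1,a_3)\in\mathcal{Z}(A_1)$ unconditionally, after which the centrality of $\tau(a_3)$ follows exactly as you argue.
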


\begin{lemma}{\rm (\cite[Lemma 3.12]{XiaoWei2})} \label{xxsec3.16}
There exist linear mapping $\gamma\colon A_4\longrightarrow {\mathcal Z}(A_{1})$
and bilinear mapping $\delta\colon A_1\times A_4\rightarrow {\mathcal Z}(A_1)$ such that
$f_{14}(a_{1},a_{4})=\gamma(a_4)a_1+\delta(a_1,a_4)$, for all $a_1\in A_1, a_4\in A_4$.
\end{lemma}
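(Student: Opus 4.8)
The plan is to mirror the argument of Lemma \ref{xxsec3.10}, now applied to the map $f_{14}$. The essential input has already been prepared at the end of Lemma \ref{xxsec3.11}, where it was shown that $a_1\longmapsto f_{14}(a_1,a_4)$ is a commuting linear mapping on $A_1$ for each fixed $a_4\in A_4$. Since hypothesis $(1)$ of Theorem \ref{xxsec3.4} guarantees that every commuting linear mapping on $A=A_1$ is proper, for each $a_4$ there is a central element $\gamma(a_4)\in\mathcal{Z}(A_1)$ and a map $\delta(\,\cdot\,,a_4)\colon A_1\to\mathcal{Z}(A_1)$, linear in its first slot, with $f_{14}(a_1,a_4)=\gamma(a_4)a_1+\delta(a_1,a_4)$. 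At this stage $\gamma$ and $\delta$ are defined only pointwise in $a_4$, so the remaining task is to promote them to an $\mathcal{R}$-linear map $\gamma\colon A_4\to\mathcal{Z}(A_1)$ and an $\mathcal{R}$-bilinear map $\delta\colon A_1\times A_4\to\mathcal{Z}(A_1)$, respectively.

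First I would exploit the fact that $f_{14}\colon A_1\times A_4\to A_1$ is itself $\mathcal{R}$-bilinear, being one of the defining component maps of the bilinear trace $\mathfrak{T}_{\mathfrak q}$. Expanding $f_{14}(a_1,a_4+b_4)$ in two ways and subtracting gives
\[
\bigl(\gamma(a_4+b_4)-\gamma(a_4)-\gamma(b_4)\bigr)a_1
+\bigl(\delta(a_1,a_4+b_4)-\delta(a_1,a_4)-\delta(a_1,b_4)\bigr)=0
\]
for all $a_1\in A_1$, $a_4,b_4\in A_4$, and an identical computation with $ra_4$ produces the corresponding homogeneity defect. Because every term here lies in $\mathcal{Z}(A_1)$, commuting the whole identity with an arbitrary $b_1\in A_1$ annihilates the second summand and leaves
\[
\bigl(\gamma(a_4+b_4)-\gamma(a_4)-\gamma(b_4)\bigr)[a_1,b_1]=0.
\]

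To finish I would invoke hypothesis $(2)$: since $\mathcal{Z}(A)\neq A$, the algebra $A=A_1$ is noncommutative, so we may choose $a_1,b_1$ with $[a_1,b_1]\neq 0$; moreover $\gamma$ takes values in $\mathcal{Z}(A)=\pi_A(\mathcal{Z(G)})$, again by hypothesis $(2)$, so the additivity defect $\gamma(a_4+b_4)-\gamma(a_4)-\gamma(b_4)$ is an element of $\pi_A(\mathcal{Z(G)})$ killed by the nonzero element $[a_1,b_1]$. Lemma \ref{xxsec3.2} then forces this defect to vanish, and the homogeneity defect vanishes the same way, so $\gamma$ is $\mathcal{R}$-linear. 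Feeding this back into the two displayed identities shows that $\delta$ is additive and homogeneous in its second argument as well, hence $\mathcal{R}$-bilinear. I do not expect a genuine obstacle here, as the scheme is exactly that of Lemma \ref{xxsec3.10}; the only point demanding care is checking that the additivity defect of $\gamma$ really lands in $\pi_A(\mathcal{Z(G)})$ so that the domain property of Lemma \ref{xxsec3.2} can be applied, which is precisely what hypothesis $(2)$ provides.
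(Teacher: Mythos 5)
Your proof is correct and is essentially the argument the paper intends: the lemma itself is quoted from \cite[Lemma 3.12]{XiaoWei2} without a reproduced proof, but your scheme---the commuting property of $a_1\longmapsto f_{14}(a_1,a_4)$ established at the end of the proof of Lemma \ref{xxsec3.11}, properness from hypothesis (1) of Theorem \ref{xxsec3.4}, and then $\mathcal{R}$-linearity of $\gamma$ and bilinearity of $\delta$ via the noncommutativity of $A_1$ together with Lemma \ref{xxsec3.2}---is exactly the template the paper uses to prove the analogous Lemma \ref{xxsec3.10}. If anything, you are slightly more careful than the paper, which at the corresponding step of Lemma \ref{xxsec3.10} appeals only to noncommutativity, whereas the cancellation of the additivity defect against a nonzero commutator genuinely needs the domain-type property of $\pi_A(\mathcal{Z(G)})$ supplied by Lemma \ref{xxsec3.2} and the loyalty of $M$, as you note.
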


\begin{lemma}{\rm (\cite[Lemma 3.13]{XiaoWei2})}\label{xxsec3.17}
$k_{14}(a_1, a_4)=\gamma^\prime(a_1)a_4+\varphi(\delta(a_1,a_4))$,
where $\gamma^\prime(a_1)=k_{14}(a_1,1^\prime)-\varphi(\delta(a_1,1^\prime))$
for all $a_1\in A_1$, $a_4\in A_4$.
\end{lemma}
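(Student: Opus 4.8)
The plan is to first put $k_{14}$ into its generic proper form and then reduce the statement to a single identity in a central-valued discrepancy. By Lemma \ref{xxsec3.11} we have $[k_{14}(a_1,a_4),a_4]=0$, so for each fixed $a_1\in A_1$ the map $a_4\longmapsto k_{14}(a_1,a_4)$ is commuting and $\mathcal{R}$-linear on $A_4$; condition (1) makes it proper, giving $k_{14}(a_1,a_4)=\lambda(a_1)a_4+\mu(a_1,a_4)$ with $\lambda(a_1)\in\mathcal{Z}(A_4)$ and $\mu(a_1,\cdot)\colon A_4\to\mathcal{Z}(A_4)$ $\mathcal{R}$-linear. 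Setting $a_4=1^\prime$ gives $\lambda(a_1)+\mu(a_1,1^\prime)=k_{14}(a_1,1^\prime)$. Writing $\Delta(a_1,a_4):=\mu(a_1,a_4)-\varphi(\delta(a_1,a_4))\in\mathcal{Z}(A_4)$, where $\delta$ is the central map attached to $f_{14}$ in Lemma \ref{xxsec3.16}, a direct rearrangement shows that the assertion is equivalent to the single identity $\Delta(a_1,a_4)=\Delta(a_1,1^\prime)a_4$, the $a_4$-linear discrepancy being absorbed into $\gamma^\prime(a_1)=k_{14}(a_1,1^\prime)-\varphi(\delta(a_1,1^\prime))$. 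So it suffices to establish this identity.

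The link between $f_{14}$ and $k_{14}$ is the relation $(3.46)$. I would substitute the form $f_{14}(a_1,a_4)=\gamma(a_4)a_1+\delta(a_1,a_4)$ from Lemma \ref{xxsec3.16} together with the proper form of $k_{14}$, transporting every central factor across $M$ by means of $zm=m\varphi(z)$ for $z\in\pi_A(\mathcal{Z(G)})$ and $mb=\varphi^{-1}(b)m$ for $b\in\pi_B(\mathcal{Z(G)})$, which are legitimate because condition (2) identifies $\pi_A(\mathcal{Z(G)})=\mathcal{Z}(A_1)$ and $\pi_B(\mathcal{Z(G)})=\mathcal{Z}(A_4)$. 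To dispose of the two unknown $M$-valued maps $g_{12},g_{24}$ and of the leading coefficients $\gamma,\gamma^\prime$, I would specialize $(3.46)$ at $a_1=1$ to solve for $g_{24}(a_2,a_4)$ and feed the result back into $(3.46)$: the $g_{24}$-term disappears, and because $\gamma(a_4)$ is central in $A_1$ the $\gamma$-contributions cancel as well, leaving an identity of the shape $(W-V)a_4=a_2\Delta(a_1,a_4)-a_1a_2\Delta(1,a_4)$, where the element $W-V\in M$ collects the leftover $g_{12}$-difference and $\gamma^\prime$-terms and is independent of $a_4$. Specializing now at $a_4=1^\prime$ evaluates $W-V$ in terms of $\Delta(\cdot,1^\prime)$, and reinserting it collapses everything to
$$
a_2\,\Theta(a_1,a_4)=a_1a_2\,\Theta(1,a_4),\qquad \Theta(a_1,a_4):=\Delta(a_1,1^\prime)a_4-\Delta(a_1,a_4)\in\mathcal{Z}(A_4).
$$

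It then remains to show $\Theta\equiv 0$. Using $a_2\Theta(a_1,a_4)=\varphi^{-1}(\Theta(a_1,a_4))a_2$ and $a_1a_2\Theta(1,a_4)=a_1\varphi^{-1}(\Theta(1,a_4))a_2$, the faithfulness of $M$ as a left $A_1$-module (a consequence of loyalty, condition (3)) strips off $a_2$ and yields $\varphi^{-1}(\Theta(a_1,a_4))=a_1\varphi^{-1}(\Theta(1,a_4))$ in $A_1$ for every $a_1$. The left-hand side is central while the right-hand side is $a_1$ times a fixed central element; since $A_1$ is noncommutative by condition (2), choosing $a_1,b_1$ with $[a_1,b_1]\neq0$ and invoking Lemma \ref{xxsec3.2} forces $\varphi^{-1}(\Theta(1,a_4))=0$, hence $\Theta(1,a_4)=0$, and then $\Theta(a_1,a_4)=0$. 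This is exactly $\Delta(a_1,a_4)=\Delta(a_1,1^\prime)a_4$, which rearranges into the claimed expression for $k_{14}$.

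The step I expect to be the main obstacle is the elimination of $g_{12},g_{24},\gamma,\gamma^\prime$ from $(3.46)$: since $g_{12}$ and $g_{24}$ carry no closed form, the only viable route is the double specialization (namely $a_1=1$ to remove $g_{24}$, then $a_4=1^\prime$ to recapture the residual $a_4$-free combination $W-V$), combined with the crucial cancellation of the $\gamma$-terms afforded by the centrality of $\gamma(a_4)$ in $A_1$. Once the identity for $\Theta$ is reached, the conclusion $\Theta\equiv 0$ is a routine application of loyalty and Lemma \ref{xxsec3.2}.
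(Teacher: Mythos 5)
Your global strategy is the right one, and indeed it is essentially the only viable one given that $N$ may be zero: the lemma (which this paper imports from \cite[Lemma 3.13]{XiaoWei2} without reproducing the proof) must be extracted from relation $(3.46)$, and your double specialization ($a_1=1$ to eliminate $g_{24}$, then $a_4=1^\prime$ to recapture the $a_4$-free remainder) correctly collapses everything to the identity $a_2\Theta(a_1,a_4)=a_1a_2\Theta(1,a_4)$ with $\Theta(a_1,a_4)=\Delta(a_1,1^\prime)a_4-\Delta(a_1,a_4)$; I checked this computation and it is sound, including the cancellation of the $\gamma$-terms and the absorption of the $\lambda$-terms into $W-V$. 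The genuine gap is the annotation $\Theta(a_1,a_4)\in\mathcal{Z}(A_4)$ and the transport step $a_2\Theta(a_1,a_4)=\varphi^{-1}(\Theta(a_1,a_4))a_2$ built on it. Even granting that $\Delta$ is $\mathcal{Z}(A_4)$-valued, the term $\Delta(a_1,1^\prime)a_4$ is a central element times an arbitrary $a_4$, which is not central, so $\Theta$ is not known to lie in $\pi_B(\mathcal{Z(G)})$ and the rule $mb=\varphi^{-1}(b)m$ is simply unavailable for it. Worse, this is not innocent bookkeeping: if $\Theta(a_1,a_4)$ were central for all $a_4$, then $\Delta(a_1,1^\prime)[a_4,b_4]=[\Theta(a_1,a_4)+\Delta(a_1,a_4),b_4]=0$ for all $a_4,b_4$, and since $\mathcal{Z}(B)\neq B$ the $B$-side analogue of Lemma \ref{xxsec3.2} would already force $\Delta(a_1,1^\prime)=0$ --- a nontrivial piece of the very conclusion you are proving. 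So as written the last step assumes what needs to be shown.

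The repair is short and needs no centrality at all. From $a_2\Theta(a_1,a_4)=a_1a_2\Theta(1,a_4)$, replace $a_2$ by $b_1a_2$ and subtract $b_1$ times the original identity: this gives $[a_1,b_1]\,a_2\,\Theta(1,a_4)=0$, i.e.\ $[a_1,b_1]M\Theta(1,a_4)=0$. Since condition (2) gives $\mathcal{Z}(A)\neq A$, choose $a_1,b_1$ with $[a_1,b_1]\neq 0$; loyalty of $M$ (condition (3)) then yields $\Theta(1,a_4)=0$ for all $a_4$. The identity becomes $M\Theta(a_1,a_4)=0$, and faithfulness of $M$ as a right $B$-module (a consequence of loyalty, as noted in Section \ref{xxsec3}) gives $\Theta\equiv 0$, which is exactly $\Delta(a_1,a_4)=\Delta(a_1,1^\prime)a_4$, i.e.\ the lemma. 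Note also that with this repair your first step becomes superfluous, which is fortunate, because its appeal to properness of commuting maps on $B$ is itself shaky: condition (1) of Theorem \ref{xxsec3.4} is a disjunction (``on $A$ \emph{or} $B$''), every other argument in the paper invokes only the $A$-side (Lemmas \ref{xxsec3.10} and \ref{xxsec3.16}), and the hypotheses are not symmetric in $A$ and $B$ since only $M$ is assumed loyal. Running your computation directly with $\Delta(a_1,a_4):=k_{14}(a_1,a_4)-\varphi(\delta(a_1,a_4))$ produces the same $\Theta$ and the same collapsed identity, with no decomposition $k_{14}=\lambda a_4+\mu$ needed.
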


\begin{lemma}\label{xxsec3.18}
$g_{24}(a_2,a_4)=a_2(\varepsilon'a_4+\varphi(\gamma(a_4)))$ and
$g_{12}(a_1,a_2)=(\varepsilon a_1+\varphi^{-1}(\gamma'(a_1)))a_2$,
where $\varepsilon'=\kappa-\gamma'(1)$,
$\kappa=\varphi(f_{11}(1,1))-k_{11}(1,1)$,
$\varepsilon=\theta-\gamma(1)$,
$\theta=\varphi^{-1}(k_{11}(1,1))-f_{44}(1,1)$.
\end{lemma}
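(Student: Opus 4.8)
The plan is to pin down the two mixed components $g_{12}\colon A_1\times A_2\to A_2$ and $g_{24}\colon A_2\times A_4\to A_2$ by feeding the ``diagonal'' relations into the coupling relation $(3.46)$, and then to promote the resulting leading coefficients to central elements. I would first record the two corner evaluations. Putting $a_1=1$ in $(3.34)$ and using $k_{11}(1,1)\in\mathcal{Z}(A_4)$ (Lemma \ref{xxsec3.11}) together with $a_2k_{11}(1,1)=\varphi^{-1}(k_{11}(1,1))a_2$ gives
\[
g_{12}(1,a_2)=c\,a_2,\qquad c:=f_{11}(1,1)-\varphi^{-1}(k_{11}(1,1)),
\]
and dually $a_4=1'$ in $(3.36)$ with $f_{44}(1',1')\in\mathcal{Z}(A_1)$ gives $g_{24}(a_2,1')=a_2c'$ with $c':=k_{44}(1',1')-\varphi(f_{44}(1',1'))$.

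Next I would specialize $(3.46)$. Taking $a_1=1$, inserting $g_{12}(1,a_2)=c\,a_2$ and the shapes $f_{14}(1,a_4)=\gamma(a_4)+\delta(1,a_4)$ (Lemma \ref{xxsec3.16}) and $k_{14}(1,a_4)=\gamma'(1)a_4+\varphi(\delta(1,a_4))$ (Lemma \ref{xxsec3.17}), and moving central factors across $M$, the $\delta(1,a_4)$--terms cancel, leaving
\[
g_{24}(a_2,a_4)=\bigl(c-\varphi^{-1}(\gamma'(1))\bigr)a_2a_4+a_2\varphi(\gamma(a_4)).
\]
The mirror computation, $a_4=1'$ in $(3.46)$ with $g_{24}(a_2,1')=a_2c'$, produces
\[
g_{12}(a_1,a_2)=a_1a_2c'-\gamma(1')a_1a_2+\varphi^{-1}(\gamma'(a_1))a_2.
\]
Here hypotheses (1) and (2) are essential: they force $\gamma(a_4)\in\mathcal{Z}(A_1)$ and $\gamma'(a_1)\in\mathcal{Z}(A_4)=\pi_B(\mathcal{Z(G)})$, so that $\varphi$ and $\varphi^{-1}$ apply to every coefficient that appears.

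The heart of the matter, and where I expect the main obstacle, is to show that $c$ and $c'$ are central; only then do the two displays take the asserted form. For $c$ I would first deduce from $(3.49)$ and $f_{44}(a_4,a_4)\in\mathcal{Z}(A_1)$ that $\bigl(g_{24}(a_1a_2,a_4)-a_1g_{24}(a_2,a_4)\bigr)a_4=0$, and then cancel the right factor $a_4$ by the replace--$a_4$--by--$a_4\pm 1'$ device of Lemma \ref{xxsec3.5} (this is precisely where $2$-torsionfreeness is used), obtaining the module linearity $g_{24}(a_1a_2,a_4)=a_1g_{24}(a_2,a_4)$. Substituting the formula for $g_{24}$ and cancelling the faithful factor $a_2$ at $a_4=1'$ yields $[c,a_1]=0$ for all $a_1$, that is $c\in\mathcal{Z}(A_1)$. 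Symmetrically, from $(3.34)$ one gets the right $B$-linearity $g_{12}(a_1,a_2a_4)=g_{12}(a_1,a_2)a_4$ essentially for free (using $g_{12}(1,\cdot)=c(\cdot)$ and the unit, no torsion needed); plugging in the formula for $g_{12}$ and cancelling $a_2$ as a faithful right $B$-module gives $[a_4,c']=0$, i.e.\ $c'\in\mathcal{Z}(A_4)$. The delicate point to watch is that the parallel identities on the $N$-side are unavailable, since $N$ is not assumed faithful, so every cancellation must be routed through the loyal bimodule $M$.

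Once $c,c'$ are central the proof closes immediately. Since $c\in\mathcal{Z}(A_1)$ we have $\varphi(c)=\varphi(f_{11}(1,1))-k_{11}(1,1)=\kappa$, hence $c-\varphi^{-1}(\gamma'(1))=\varphi^{-1}(\kappa-\gamma'(1))=\varphi^{-1}(\varepsilon')$, and pushing this central factor across $M$ converts the $g_{24}$ display into $a_2(\varepsilon'a_4+\varphi(\gamma(a_4)))$. Likewise $\varphi^{-1}(c')$ is the constant $\theta$ read off from the $a_4=1'$ specialization, so $\varphi^{-1}(c')-\gamma(1')=\varepsilon$ and the $g_{12}$ display becomes $(\varepsilon a_1+\varphi^{-1}(\gamma'(a_1)))a_2$, completing the proof.
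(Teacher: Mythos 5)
Your proposal is correct, and its computational skeleton is exactly the paper's: evaluate $(3.34)$ at $a_1=1$ and $(3.36)$ at $a_4=1'$ to get the corner values $g_{12}(1,a_2)$ and $g_{24}(a_2,1')$, feed these into $(3.46)$ specialized at $a_1=1$ and at $a_4=1'$, and identify the remaining terms via Lemmas \ref{xxsec3.16} and \ref{xxsec3.17}. Where you add something genuine is the centrality of the constants $c=f_{11}(1,1)-\varphi^{-1}(k_{11}(1,1))$ and $c'=k_{44}(1',1')-\varphi(f_{44}(1',1'))$: the paper simply writes $\kappa=\varphi(f_{11}(1,1))-k_{11}(1,1)$ and $\theta=\varphi^{-1}(k_{44}(1',1'))-f_{44}(1',1')$ and slides these factors across $a_2$, which tacitly presupposes $f_{11}(1,1)\in\pi_A(\mathcal{Z(G)})$ and $k_{44}(1',1')\in\pi_B(\mathcal{Z(G)})$ without justification (otherwise $\varphi$ and $\varphi^{-1}$ are not even defined on them). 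Your closure of this gap --- deriving $g_{24}(a_1a_2,a_4)=a_1g_{24}(a_2,a_4)$ from $(3.49)$ and $f_{44}(a_4,a_4)\in\mathcal{Z}(A_1)$ (Lemma \ref{xxsec3.11}), and $g_{12}(a_1,a_2a_4)=g_{12}(a_1,a_2)a_4$ from $(3.34)$ and $k_{11}(a_1,a_1)\in\mathcal{Z}(A_4)$, then cancelling through the faithful sides of $M$ --- is valid, but longer than necessary: since $a_1\mapsto f_{11}(a_1,a_1)$ and $a_4\mapsto k_{44}(a_4,a_4)$ are commuting traces (Lemma \ref{xxsec3.8}(1)), linearizing $[f_{11}(a_1,a_1),a_1]=0$, setting $a_1=1$ and using $2$-torsionfreeness gives $[f_{11}(1,1),b_1]=0$ for all $b_1\in A_1$ in one line, and likewise $[k_{44}(1',1'),b_4]=0$; no module-linearity or loyalty is needed for this step. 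Two minor corrections to your write-up: the unital cancellation of the right factor $a_4$ (substitute $a_4+1'$ and use $L(1')=0$) does not in fact use $2$-torsionfreeness, contrary to your aside; and your $\theta=\varphi^{-1}(k_{44}(1',1'))-f_{44}(1',1')$ agrees with the paper's proof rather than with the lemma's statement, which confirms that the expression $\theta=\varphi^{-1}(k_{11}(1,1))-f_{44}(1,1)$ displayed in the statement is a typo.
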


\begin{proof}
By $(3.34)$ we know that $g_{12}(1,a_2)=f_{11}(1,1)a_2-a_2k_{11}(1,1)$
for all $a_2\in A_2$. Let us set $a_1=1$ in $(3.46)$. We get
$$
g_{24}(a_2,a_4)=a_2(\kappa a_4+\varphi(f_{14}(1,a_4))-k_{14}(1,a_4))
$$
for all $a_2\in A_2$, $a_4\in A_4$, where $\kappa=\varphi(f_{11}(1,1))-k_{11}(1,1)$.
Similarly, using $(3.36)$ and $(3.46)$ we have
$$
g_{12}(a_1,a_2)=(\theta a_1+\varphi^{-1}(k_{14}(a_1,1^\prime))-f_{14}(a_1,1^\prime))a_2
$$
for all $a_1\in A_1$, $a_2\in A_2$, where $\theta=\varphi^{-1}(k_{44}(1^\prime, 1^\prime))-f_{44}(1^\prime, 1^\prime)$.
By the Lemma \ref{xxsec3.16} and Lemma \ref{xxsec3.17} one can conclude the following relations
$$
g_{24}(a_2,a_4)=a_2(\varepsilon'a_4+\varphi(\gamma(a_4))) ,
\quad g_{12}(a_1,a_2)=(\varepsilon a_1+\varphi^{-1}(\gamma'(a_1)))a_2, 
$$
where $\varepsilon=\theta-\gamma(1)$, and $\varepsilon'=\kappa-{\gamma^\prime}^{-1}(1)$.
\end{proof}

\begin{lemma}\label{xxsec3.19}
$h_{13}(a_1,a_3)=a_3\varepsilon a_1+\gamma'(a_1)a_3$ and
$h_{34}(a_3,a_4)=(\varepsilon'a_4+\varphi(\gamma(a_4)))a_3$,
where $\varepsilon'=\kappa-\gamma'(1)$,
$\varepsilon=\theta-\gamma(1)$.
\end{lemma}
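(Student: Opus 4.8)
The plan is to derive both identities by a single symmetric argument, extracting each off-diagonal coefficient $h$ from the companion relations that already pin down the diagonal coefficients $f_{11}$ and $k_{44}$. First I would feed the formulas for $g_{12}$ and $g_{24}$ supplied by Lemma~\ref{xxsec3.18} into the module equations $(3.34)$ and $(3.36)$. Since $\varepsilon,\varphi^{-1}(\gamma'(a_1))\in\mathcal{Z}(A_1)$ and $\varepsilon',\varphi(\gamma(a_4))\in\mathcal{Z}(A_4)$ (by Lemmas~\ref{xxsec3.11}, \ref{xxsec3.16}, \ref{xxsec3.17}), these central scalars commute through the products; using the translation rules $am=m\varphi(a)$ and $mb=\varphi^{-1}(b)m$ to move the mixed terms $a_2k_{11}(a_1,a_1)$ and $f_{44}(a_4,a_4)a_2$ onto the module side, and then cancelling the faithful factor $a_2\in A_2=M$, I expect to obtain the closed forms
\[
f_{11}(a_1,a_1)=\varepsilon a_1^2+\varphi^{-1}(\gamma'(a_1))a_1+\varphi^{-1}(k_{11}(a_1,a_1)),
\]
\[
k_{44}(a_4,a_4)=\varepsilon' a_4^2+\varphi(\gamma(a_4))a_4+\varphi(f_{44}(a_4,a_4)).
\]

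Next I would substitute these into $(3.26)$, namely $h_{13}(a_1,a_3)a_1=a_3f_{11}(a_1,a_1)-k_{11}(a_1,a_1)a_3$. Expanding $a_3f_{11}(a_1,a_1)$ and using the right-module rule $na=\varphi(a)n$ to rewrite $a_3\varphi^{-1}(k_{11}(a_1,a_1))=k_{11}(a_1,a_1)a_3$ and $a_3\varphi^{-1}(\gamma'(a_1))=\gamma'(a_1)a_3$, the $k_{11}$-terms cancel and the equation collapses to
\[
\bigl[h_{13}(a_1,a_3)-a_3\varepsilon a_1-\gamma'(a_1)a_3\bigr]a_1=0
\]
for all $a_1\in A_1$. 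Because $h_{13}$ is bilinear and $\gamma'$ is $\mathcal{R}$-linear (Lemma~\ref{xxsec3.17}), the bracketed expression $D(a_1)$ is $\mathcal{R}$-linear in $a_1$; putting $a_1=1$ gives $D(1)=0$, and linearizing $D(a_1)a_1=0$ to $D(a_1)b_1+D(b_1)a_1=0$ and then setting $b_1=1$ forces $D(a_1)=0$. This yields $h_{13}(a_1,a_3)=a_3\varepsilon a_1+\gamma'(a_1)a_3$.

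The second identity is entirely parallel, except that the surviving factor sits on the left. Substituting the formula for $k_{44}$ into $(3.28)$ and using $bn=n\varphi^{-1}(b)$ to absorb the $\varphi(f_{44}(a_4,a_4))$-term against $-a_3f_{44}(a_4,a_4)$, I expect to reach
\[
a_4\bigl[h_{34}(a_3,a_4)-(\varepsilon'a_4+\varphi(\gamma(a_4)))a_3\bigr]=0
\]
for all $a_4\in A_4$. The bracket $E(a_4)$ is again $\mathcal{R}$-linear (using linearity of $\gamma$ from Lemma~\ref{xxsec3.16}); evaluating at $a_4=1'$ gives $E(1')=0$, and the symmetric linearization $a_4E(b_4)+b_4E(a_4)=0$ with $b_4=1'$ gives $E(a_4)=0$, that is, $h_{34}(a_3,a_4)=(\varepsilon'a_4+\varphi(\gamma(a_4)))a_3$.

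The algebra itself is routine; the step that requires genuine care is the systematic use of the four compatibility identities $am=m\varphi(a)$, $mb=\varphi^{-1}(b)m$, $na=\varphi(a)n$, $bn=n\varphi^{-1}(b)$ to transport the central coefficients $\varepsilon,\varepsilon',\gamma,\gamma',k_{11},f_{44}$ across $M$ and $N$ consistently, since a single misplaced $\varphi$ would spoil the cancellations that make the $k_{11}$- and $f_{44}$-terms disappear. The only structural inputs beyond this bookkeeping are the loyalty (hence faithfulness) of $M$, which legitimizes cancelling $a_2$, and the $\mathcal{R}$-linearity of $\gamma$ and $\gamma'$, which legitimizes the final linearization that strips off the residual factor $a_1$ (respectively $a_4$).
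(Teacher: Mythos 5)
Your proof is correct, but it follows a genuinely different route from the paper's. The paper disposes of Lemma \ref{xxsec3.19} with the single remark that it is ``similar to that of Lemma \ref{xxsec3.18}'': the intended argument specializes the linking identity $(3.39)$ (the $H$-analogue of $(3.46)$) at $a_1=1$ and $a_4=1^\prime$, combines it with $(3.26)$ and $(3.28)$ evaluated at the identity elements, and then invokes the explicit forms of $f_{14}$ and $k_{14}$ from Lemmas \ref{xxsec3.16}--\ref{xxsec3.17} to recognize the coefficients $\varepsilon, \varepsilon^\prime, \gamma, \gamma^\prime$. You instead bypass $(3.39)$ altogether: you feed the conclusions of Lemma \ref{xxsec3.18} into $(3.34)$ and $(3.36)$ and cancel the faithful factor $a_2$ to obtain the closed forms of $f_{11}(a_1,a_1)$ and $k_{44}(a_4,a_4)$ --- these are precisely equations $(3.67)$ and $(3.68)$, which the paper only derives later, inside the proof of Theorem \ref{xxsec3.4} --- and then substitute them into $(3.26)$ and $(3.28)$, stripping the residual factor $a_1$ (resp.\ $a_4$) by linearization and evaluation at the identity. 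All the ingredients you use (Lemmas \ref{xxsec3.11}, \ref{xxsec3.16}, \ref{xxsec3.17}, \ref{xxsec3.18}, the $\varphi$-translation rules, faithfulness of $M$) are available before Lemma \ref{xxsec3.19}, and you never use Lemma \ref{xxsec3.19} itself in deriving $(3.67)$/$(3.68)$, so there is no circularity. What each approach buys: the paper's route keeps the lemma structurally parallel to Lemma \ref{xxsec3.18} and postpones any use of faithfulness of $M$; your route is more economical globally, since $(3.67)$ and $(3.68)$ are needed for Theorem \ref{xxsec3.4} anyway and would then come for free, and your final cancellation step (linear in the stripped variable, killed by setting $b_1=1$ or $b_4=1^\prime$ after linearizing) is simpler than the quadratic situation that Lemma \ref{xxsec3.5} was designed for. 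One cosmetic remark: your statement and the paper's both write $\gamma(1)$ in the definition of $\varepsilon$, but since $\gamma$ is defined on $A_4$ this should be read as $\gamma(1^\prime)$, exactly as your computation in fact uses it.
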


\begin{proof}
The proof is similar to that of Lemma \ref{xxsec3.18}.
\end{proof}

{\noindent}{\bf Proof of Theorem 3.4.}
In view of Lemma \ref{xxsec3.18} and Lemma \ref{xxsec3.19}, we obtain 
$$
g_{24}(a_2,a_4)a_3=a_2h_{34}(a_3,a_4).
$$
It follows from the identity $(3.18)$ that
$$
[k_{23}(a_{2},a_{3}),a_{4}]=a_{3}g_{24}(a_{2},a_{4})-h_{34}(a_{3},a_{4})a_{2}.
$$
Combining Lemma \ref{xxsec3.18} with Lemma \ref{xxsec3.19} yields
$$
k_{23}(a_{2},a_{3})-\varepsilon^\prime a_3a_2\in {\mathcal Z}(A_4). \eqno(3.65)
$$
By $(3.19)$ and those similar computational procedures, we arrive at
$$
f_{23}(a_{2},a_{3})-\varepsilon a_2a_3\in {\mathcal Z}(A_1). \eqno(3.66)
$$
Taking $a_1=1$ and $a_4=1^\prime$ into $(3.46)$ and using Lemmas \ref{xxsec3.16}-\ref{xxsec3.18},
we conclude that $\varepsilon a_2=a_2\varepsilon^\prime$ for all $a_2\in A_2$.
Note that $\varepsilon \in {\mathcal Z}(A_1)=\pi_B(\mathcal{Z(G)})$ and $ \varepsilon^\prime\in {\mathcal Z}(A_4)=\pi_B(\mathcal{Z(G)})$. 
Therefore $\varepsilon \oplus \varepsilon^\prime\in \mathcal{Z(G)}$.

By $(3.34)$ and Lemma \ref{xxsec3.18} we have
$$
(f_{11}(a_1,a_1)-\varepsilon a^{2}_1-\varphi^{-1}(\gamma^\prime (a_1))a_1-\varphi^{-1}(k_{11}(a_1,a_1)))a_2=0
$$
for all $a_1\in A_1$, $a_2\in A_2$. Since $A_2=M$ is faithful as a left $A$-module,
$$
f_{11}(a_1,a_1)=\varepsilon a^{2}_1+\varphi^{-1}(\gamma^\prime (a_1))a_1+\varphi^{-1}(k_{11}(a_1,a_1)) \eqno(3.67)
$$
for all $a_1\in A_1$. Similarly,
$$
k_{44}(a_4,a_4)=\varepsilon^\prime  a^{2}_1+\varphi(\gamma(a_4))a_4+\varphi(f_{44}(a_4,a_4)) \eqno(3.68)
$$
for all $a_4\in A_4$.

Finally, let us set $z=\varepsilon\oplus \varepsilon^\prime$ and define the mapping $\mu\colon\mathcal{G}\longrightarrow \mathcal{Z(G)}$ by
$$
\begin{aligned}
&\left[\begin{array}{cc}
a_1 & a_2\\
a_3 & a_4 \end{array}\right]\longmapsto  \\
&\left[\begin{array}{cc}
\varphi^{-1}(\gamma^\prime (a_1))+\gamma(a_4)+\alpha(a_2)+\tau(a_3) & 0\\
0 & \gamma^{'}(a_1)+\varphi(\gamma(a_4)+\alpha(a_2)+\tau(a_3)) \end{array}\right].
\end{aligned}
$$
According to all conclusions derived above, we see that
$$
\begin{aligned}
 \nu(x)\colon &= \mathfrak{T_q}(x)-zx^2-\mu(x)x \\
&\equiv \left[\begin{array}{cc}
f_{23}(a_{2},a_{3})-\varepsilon a_{2}a_{3} & 0\\
0 & k_{23}(a_{2},a_{3})-\varepsilon^\prime a_{3}a_{2} \end{array} \right]\hspace{5pt} ({\rm mod} \mathcal{Z(G)}),
\end{aligned}
$$
where $x=\left[\begin{array}{cc}
a_1 & a_2\\
a_3 & a_4 \end{array}\right]\in \mathcal{G}$.
By $(3.65)$ and $(3.66)$ and the fact $\mathfrak{T_q}$ is centralizing, it follows that
$$
\left[\left[\begin{array}{cc}
f_{23}(a_{2},a_{3})-\varepsilon a_2a_3 & 0\\
0 & k_{23}(a_{2},a_{3})-\varepsilon^\prime  a_3a_2 \end{array}\right],\left[\begin{array}{cc}
a_1 & a_2\\
a_3 & a_4 \end{array}\right]\right]=0.
$$
This implies that $f_{23}(a_2,a_3)-\varepsilon a_2a_3\in {\mathcal
Z}(A_1)=\pi_A(\mathcal{Z(G)})$ and $k_{23}(a_2,a_3)-\varepsilon'
a_3a_2\in {\mathcal Z}(A_4)=\pi_B(\mathcal{Z(G)})$. Moreover, it shows that
$$
(f_{23}(a_{2},a_{3})-\varepsilon a_2a_3)a_2=a_2(k_{23}(a_{2},a_{3})-\varepsilon^\prime a_3a_2)
$$
and
$$
a_3(f_{23}(a_{2},a_{3})-\varepsilon a_2a_3)=(k_{23}(a_{2},a_{3})-\varepsilon^\prime a_3a_2)a_3
$$
for all $a_{2}\in A_2,a_{3}\in A_3$. For convenience, let us write
$\mathfrak{f}(a_{2},a_{3})=f_{23}(a_{2},a_{3})-\varepsilon a_2a_3$, and
$\mathfrak{k}(a_{2},a_{3})=k_{23}(a_{2},a_{3})-\varepsilon^\prime a_3a_2$. Thus
$$
(\mathfrak{f}(a_{2},a_{3})-\varphi^{-1}(\mathfrak{k}(a_{2},a_{3})))a_2=0
$$
for all $a_{2}\in A_2,a_{3}\in A_3$. A linearization of the last relation gives
$$
(\mathfrak{f}(a_{2},a_{3})-\varphi^{-1}(\mathfrak{k}(a_{2},a_{3})))b_2
+(\mathfrak{f}(b_{2},a_{3})-\varphi^{-1}(\mathfrak{k}(b_{2},a_{3})))a_2=0
$$
for all $a_{2}, b_2\in A_2,a_{3}\in A_3$. Note that the hypothesis $A_2=M$ is loyal as an $(A,B)$-bimodule.
It follows from Lemma \ref{xxsec3.1} that
$\mathfrak{f}(a_{2},a_{3})=\varphi^{-1}(\mathfrak{k}(a_{2},a_{3}))$ for all $a_{2}\in A_2,a_{3}\in A_3$.
Hence $\nu$ maps $\mathcal{G}$ into $\mathcal{Z(G)}$. Therefore $\mathfrak{T_q}(x)=zx^2+\mu(x)x+\nu(x)$. This implies that the centralizing trace $\mathfrak{T_q}(x)$ is proper. In the other words, 
centralizing trace of every bilinear mapping in $\mathcal{G}$ is commuting. The proof of Theorem $3.4$ is completed.
\vspace{2mm}

As a direct consequence of Theorem \ref{xxsec3.4} we get

\begin{corollary}{\rm (\cite[Theorem 3.4]{XiaoWei2})}\label{xxsec3.21}
Let $\mathcal{G}=\mathcal{G}(A, M, N, B)$  be a $2$-torsionfree generalized matrix algebra over a
commutative ring $\mathcal{R}$ and ${\mathfrak q}\colon
\mathcal{G}\times \mathcal{G}\longrightarrow \mathcal{G}$ be an
$\mathcal{R}$-bilinear mapping. If
\begin{enumerate}
\item[{\rm(1)}] every commuting linear mapping on $A$ or $B$ is proper;
\item[{\rm(2)}] $\pi_A(\mathcal{Z(G)})=\mathcal{Z}(A)\neq A$ and
$\pi_B(\mathcal{Z(G)})=\mathcal{Z}(B)\neq
B$;
\item[{\rm(3)}] $M$ is loyal,
\end{enumerate}
then every commuting trace ${\mathfrak T}_{\mathfrak q}:
\mathcal{G}\longrightarrow \mathcal{G}$ of ${\mathfrak q}$ is
proper.
\end{corollary}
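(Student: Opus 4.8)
The plan is to observe that this corollary is an immediate specialization of Theorem \ref{xxsec3.4}, requiring no new computation. First I would recall the precise difference between the two hypotheses on the trace ${\mathfrak T}_{\mathfrak q}$: a \emph{commuting} trace satisfies $[{\mathfrak T}_{\mathfrak q}(G), G]=0$ for every $G\in \mathcal{G}$, whereas a \emph{centralizing} trace satisfies only the weaker condition $[{\mathfrak T}_{\mathfrak q}(G), G]\in \mathcal{Z(G)}$ for every $G\in \mathcal{G}$. Since $0\in \mathcal{Z(G)}$, the commuting condition trivially implies the centralizing one, so every commuting trace of ${\mathfrak q}$ is in particular a centralizing trace of ${\mathfrak q}$.

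Next I would note that the three hypotheses (1)--(3) of this corollary coincide verbatim with those of Theorem \ref{xxsec3.4}, and that the notion of \emph{proper} is defined identically in both statements (namely, for the bilinear case $n=2$, admitting the form ${\mathfrak T}_{\mathfrak q}(x)=zx^2+\mu(x)x+\nu(x)$ with $z\in \mathcal{Z(G)}$, $\mu$ linear into $\mathcal{Z(G)}$, and $\nu$ a trace of a bilinear map into $\mathcal{Z(G)}$). Hence I would simply invoke Theorem \ref{xxsec3.4}: under these conditions every centralizing trace is proper, and therefore the commuting trace ${\mathfrak T}_{\mathfrak q}$, being a centralizing trace, is proper. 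This closes the argument in a single logical step.

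There is essentially no obstacle here, precisely because all of the delicate computational work has already been discharged in Lemmas \ref{xxsec3.5}--\ref{xxsec3.19} and the proof of Theorem \ref{xxsec3.4}; the only thing to verify is the trivial inclusion of the class of commuting traces into the class of centralizing traces. The one point worth emphasizing is that the implication is, in a sense, an equivalence: the final line of the proof of Theorem \ref{xxsec3.4} establishes that under these hypotheses every centralizing trace is in fact already commuting. Thus the commuting hypothesis in this corollary is not merely sufficient but is automatically forced by the structural conclusion, confirming that no information is lost in passing between the centralizing and commuting settings.
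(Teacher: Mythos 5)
Your proposal is correct and matches the paper's treatment exactly: the paper states Corollary \ref{xxsec3.21} as a direct consequence of Theorem \ref{xxsec3.4}, relying on precisely the observation that a commuting trace (satisfying $[{\mathfrak T}_{\mathfrak q}(G),G]=0$) is in particular centralizing, with the hypotheses (1)--(3) identical in both statements. Your closing remark that the implication is essentially an equivalence is also consistent with the paper, whose proof of Theorem \ref{xxsec3.4} ends by noting that under these hypotheses every centralizing trace is in fact commuting.
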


Following the proof of Theorem $4.3$ and making a slight modification one can show that

\begin{proposition}\label{xxsec3.22}
Let $\mathcal{G}=\mathcal{G}(A, M, N, B)$ be a $2$-torsionfree
generalized matrix algebra over a
commutative ring $\mathcal{R}$, where $B$ is a noncommutative
algebra over $\mathcal{R}$ and both $\mathcal{G}$ and $B$ are
central over $\mathcal{R}$. Suppose that ${\mathfrak q}\colon
\mathcal{G}\times \mathcal{G}\longrightarrow \mathcal{G}$ is an
$\mathcal{R}$-bilinear mapping. If
\begin{enumerate}
\item[{\rm(1)}] every commuting linear mapping of $B$ is proper,
\item[{\rm(2)}] for any $r\in\mathcal{R}$ and $m\in M$, $rm=0$
implies that $r=0$ or $m=0$,
\item[{\rm(3)}] there exist $m_0\in M$ and $b_0\in B$ such that
$m_0b_0$ and $m_0$ are $\mathcal{R}$-linearly independent,
\end{enumerate}
then every centralizing trace $\mathfrak{T_q}\colon
\mathcal{G}\longrightarrow \mathcal{G}$ of $\mathfrak{q}$ is proper.
\end{proposition}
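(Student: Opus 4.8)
The plan is to run the argument of Theorem \ref{xxsec3.4} essentially verbatim, keeping careful track of how the three hypotheses of Proposition \ref{xxsec3.22} take over the roles played there by loyalty and by the center conditions. First I would put $A_1=A$, $A_2=M$, $A_3=N$, $A_4=B$, decompose $\mathfrak{T}_{\mathfrak q}$ into its bilinear components $f_{ij},g_{ij},h_{ij},k_{ij}$, and record the centralizing identity $(3.1)$, exactly as before. The decisive simplification is that, since both $\mathcal{G}$ and $B$ are central over $\mathcal{R}$, we have $\mathcal{Z(G)}=\mathcal{R}\,1_{\mathcal{G}}$ and $\mathcal{Z}(B)=\mathcal{R}\,1'$; hence $\pi_A(\mathcal{Z(G)})=\mathcal{R}1$, $\pi_B(\mathcal{Z(G)})=\mathcal{R}1'$, and the algebra isomorphism $\varphi$ collapses to the identity on $\mathcal{R}$. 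The standing hypothesis of the section, that $M$ is faithful as a left $A$-module and as a right $B$-module, remains in force and will carry most of the weight that loyalty carried in Theorem \ref{xxsec3.4}.

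With these identifications, the purely computational lemmas---those resting only on $2$-torsionfreeness and on the faithfulness of $M$, such as Lemmas \ref{xxsec3.5}--\ref{xxsec3.9}, \ref{xxsec3.11} and \ref{xxsec3.12} (where, e.g., one passes from $[f_{34}(a_3,a_4),a_1]a_2=0$ to $[f_{34}(a_3,a_4),a_1]=0$ by left faithfulness)---transfer with no change. The genuine divergence is that Proposition \ref{xxsec3.22} assumes properness of commuting linear mappings only on $B$. I would therefore settle the $B$-valued components first: apply condition (1) to the commuting linear mappings $a_4\mapsto k_{24}(a_2,a_4)$, $a_4\mapsto k_{34}(a_3,a_4)$, $a_4\mapsto k_{14}(a_1,a_4)$ arising in Lemma \ref{xxsec3.8}, and recover the $A$-side counterparts \emph{for free} through the linking relations of Lemmas \ref{xxsec3.14}--\ref{xxsec3.17} together with the identities $(3.67)$, $(3.68)$ and $\varphi=\mathrm{id}$. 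In particular the conclusion of Lemma \ref{xxsec3.10}, originally obtained from properness on $A$, is here rerouted: $k_{12}(a_1,a_2)$ lies in $\mathcal{Z}(B)=\mathcal{R}$, so $f_{12}(a_1,a_2)=\alpha(a_2)a_1+\varphi^{-1}(k_{12}(a_1,a_2))$ acquires its commuting-linear form without any hypothesis on $A$. Reorganizing the dependencies so that the $B$-side is pinned down before the $A$-side is part of the required modification.

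The substantive change replaces the appeals to Lemma \ref{xxsec3.1} inside Lemma \ref{xxsec3.13} and in the final assembly, since those require $M$ to be loyal. In the central setting the coefficients that Lemma \ref{xxsec3.1} was used to annihilate are forced to be scalars: a $B$-central quantity lies in $\mathcal{Z}(B)=\mathcal{R}$, and the matching $A$-central quantity lies in $\mathcal{Z(G)}=\mathcal{R}$ once its commutator with every matrix is shown to vanish. A relation of the shape $r_n m+r_m n=0$ with $r_m,r_n\in\mathcal{R}$ is then handled by setting $m=n$, using $2$-torsionfreeness to get $r_n n=0$, and invoking condition (2) to conclude $r_n=0$ for every $n$, with linearity disposing of the value at $0$. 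Where the argument must instead detect the right action of $B$ on $M$ modulo scalars---precisely the place where loyalty and the noncommutativity of $B$ were previously doing the separating---condition (3) supplies $m_0\in M$ and $b_0\in B$ with $m_0b_0,m_0$ linearly independent, letting the two summands be split apart; the noncommutativity of $B$ also furnishes, as in Lemma \ref{xxsec3.13}, a $b_4$ with $[a_4,b_4]\neq 0$.

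Finally I would assemble the result as in Theorem \ref{xxsec3.4}: set $z=\varepsilon\oplus\varepsilon'$, build the central map $\mu$ from the linear pieces $\gamma,\gamma',\alpha,\tau$, and check that $\nu(x)=\mathfrak{T}_{\mathfrak q}(x)-zx^2-\mu(x)x$ maps $\mathcal{G}$ into $\mathcal{Z(G)}$, which by faithfulness reduces to the scalar identity $\mathfrak{f}(a_2,a_3)=\varphi^{-1}(\mathfrak{k}(a_2,a_3))$. I expect the main obstacle to be exactly the replacement just described: faithfully trading the single clean loyalty hypothesis for the combination of centrality, condition (2), condition (3) and $2$-torsionfreeness. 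The delicate point is that $A$ need not be central even though $\mathcal{G}$ is, so at each former application of Lemma \ref{xxsec3.1} one must first argue that the relevant $A$-valued coefficients genuinely reduce to scalars before condition (2) or (3) can be brought to bear; getting this reduction right is where the ``slight modification'' hides most of its work.
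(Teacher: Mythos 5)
Your overall strategy coincides with the paper's, which for Proposition \ref{xxsec3.22} offers nothing beyond the remark that one should follow the proof of Theorem \ref{xxsec3.4} with a slight modification; and several of your identifications are exactly right (centrality forces $\mathcal{Z(G)}=\mathcal{R}1_{\mathcal{G}}$ and $\mathcal{Z}(B)=\mathcal{R}1'$, so $\pi_B(\mathcal{Z(G)})=\mathcal{Z}(B)\neq B$ holds automatically and $\varphi$ is the identity on scalars, while conditions (2) and (3) must stand in for loyalty). But the plan has a concrete defect at the start: Lemma \ref{xxsec3.11} does \emph{not} rest only on $2$-torsionfreeness and faithfulness, and it does not ``transfer with no change.'' Its proof ends by invoking Lemma \ref{xxsec3.2}, whose hypothesis is loyalty of $M$; the same is true of the final step of Lemma \ref{xxsec3.13}. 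These steps do survive, but only after one re-proves the conclusion of Lemma \ref{xxsec3.2} under the new hypotheses: if $\alpha=r1\in\pi_A(\mathcal{Z(G)})$ and $\alpha a=0$ with $a\neq 0$, choose $m$ with $am\neq 0$ (left faithfulness) and deduce $r=0$ from $r(am)=0$ by condition (2). That substitution is precisely a modification, and your classification of which lemmas need one is wrong at exactly this point.

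The more serious gap is that your rerouting ``settle the $B$-side first and recover the $A$-side for free through Lemmas \ref{xxsec3.14}--\ref{xxsec3.17}'' is circular as described. The statement $k_{12}(a_1,a_2)\in\mathcal{Z}(B)$, which you take as the entry point, is Lemma \ref{xxsec3.13}, and its proof in the paper uses Lemma \ref{xxsec3.10}, i.e.\ properness of commuting maps on $A$ --- the very hypothesis you no longer have; moreover Lemmas \ref{xxsec3.14}--\ref{xxsec3.17} are imported from \cite{XiaoWei2}, where they are established under the full hypotheses of Theorem \ref{xxsec3.4}, so they are not ``for free'' either. What is needed, and missing from your proposal, is a genuinely new proof of the centrality statements. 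One that works: by $(3.17)$ and centrality of $\mathcal{G}$ one has $[k_{12}(a_1,a_2),a_4]=u(a_4)1'$ with $u(a_4)\in\mathcal{R}$; the identity $[k,a_4b_4]=[k,a_4]b_4+a_4[k,b_4]$ gives $u(a_4)b_4+u(b_4)a_4\in\mathcal{R}1'$, hence $u(a_4)[b_4,c_4]+u(b_4)[a_4,c_4]=0$ for all $b_4,c_4$; setting $b_4=a_4$ and using $2$-torsionfreeness, condition (2) and right faithfulness of $M$ forces $u\equiv 0$, with no appeal to loyalty or to properness on $A$. An argument of this kind must also be supplied for the point you flag only at the very end: coefficients such as $\alpha(a_2)=f_{12}(1,a_2)-\varphi^{-1}(k_{12}(1,a_2))$ a priori lie only in $\mathcal{Z}(A)$, which can be strictly larger than $\pi_A(\mathcal{Z(G)})=\mathcal{R}1$ (take $A$ the diagonal matrices inside $\mathcal{G}=M_4(\mathcal{R})$-type examples), and unless they are shown to be scalars the map $\mu$ in the final assembly is not $\mathcal{Z(G)}$-valued and properness fails. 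Identifying this as ``where the slight modification hides most of its work'' is accurate, but leaving it unresolved leaves the proof unfinished.
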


In particular,we also have

\begin{corollary}\label{xxsec3.23}
Let $\mathcal{R}$ be a $2$-torsionfree commutative domain and
${\mathcal M}_n(\mathcal{R})$ be the full matrix algebra over
$\mathcal{R}$. Suppose that ${\mathfrak q}\colon  {\mathcal
M}_n(\mathcal{R})\times {\mathcal M}_n(\mathcal{R})\longrightarrow
{\mathcal M}_n(\mathcal{R})$ is an $\mathcal{R}$-bilinear mapping.
Then every centralizing trace ${\mathfrak T}_{\mathfrak q}\colon
{\mathcal M}_n(\mathcal{R})\longrightarrow {\mathcal
M}_n(\mathcal{R})$ of $\mathfrak{q}$ is proper.
\end{corollary}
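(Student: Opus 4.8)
The plan is to obtain this as a direct application of Proposition \ref{xxsec3.22}, after realizing $\mathcal{M}_n(\mathcal{R})$ as a generalized matrix algebra with a noncommutative corner. Fixing the matrix unit $e_{11}$ as a nontrivial idempotent, I would write $\mathcal{M}_n(\mathcal{R})=\mathcal{G}(A,M,N,B)$ with $A=e_{11}\mathcal{M}_n(\mathcal{R})e_{11}\cong\mathcal{R}$, $B=(1-e_{11})\mathcal{M}_n(\mathcal{R})(1-e_{11})\cong\mathcal{M}_{n-1}(\mathcal{R})$, and $M$, $N$ the off-diagonal blocks of $1\times(n-1)$ rows and $(n-1)\times 1$ columns over $\mathcal{R}$. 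For $n\geq 3$ the corner $B$ is noncommutative, which is precisely the setting Proposition \ref{xxsec3.22} was designed to handle.

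First I would check the three numbered hypotheses. Condition (1), that every commuting linear mapping of $B=\mathcal{M}_{n-1}(\mathcal{R})$ is proper, I would invoke from the classical description of commuting linear maps on a full matrix algebra over a domain (equivalently, Bre\v{s}ar's theorem on commuting maps of prime rings, since $\mathcal{M}_{n-1}(\mathcal{R})$ is prime); this statement is logically independent of the bilinear trace conclusion we are after, so there is no circularity. Condition (2) is immediate: as $\mathcal{R}$ is a domain and $M$ consists of row vectors over $\mathcal{R}$, a relation $rm=0$ with $m\neq 0$ forces $r=0$ upon inspecting a nonzero entry of $m$. Condition (3) is witnessed by the row $m_0=(1,0,\dots,0)$ and the matrix unit $b_0=E_{12}\in B$ (available since $n-1\geq 2$), for which $m_0b_0=(0,1,0,\dots,0)$ is visibly $\mathcal{R}$-independent from $m_0$. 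The standing assumptions of the proposition are equally routine: $\mathcal{M}_n(\mathcal{R})$ is $2$-torsionfree because $\mathcal{R}$ is, and the standard computation of the center of a matrix algebra over a commutative ring gives $\mathcal{Z}(\mathcal{M}_n(\mathcal{R}))=\mathcal{R}I_n$ and $\mathcal{Z}(B)=\mathcal{R}I_{n-1}$, so both $\mathcal{G}$ and $B$ are central over $\mathcal{R}$. With all hypotheses verified, Proposition \ref{xxsec3.22} gives properness of every centralizing trace for $n\geq 3$.

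The hard part, and the only genuine obstacle, is the low-dimensional range $n\leq 2$, where the block decomposition cannot produce a noncommutative corner: for $n=2$ one has $M\cong\mathcal{R}$ of rank one, so $m_0b_0$ and $m_0$ are necessarily $\mathcal{R}$-dependent and condition (3) fails outright. The case $n=1$ is trivial, since $\mathcal{M}_1(\mathcal{R})=\mathcal{R}$ is commutative with center $\mathcal{R}$, so any trace is proper by taking the quadratic and linear coefficients to be zero and absorbing the whole map into the central term $\nu$. For $n=2$ I would argue separately, exploiting that $\mathcal{M}_2(\mathcal{R})$ is prime (as $\mathcal{R}$ is a domain) and appealing to the theory of centralizing traces on prime rings, or else carrying out a direct computation against the four matrix units $E_{ij}$; I expect this $2\times 2$ case to require the most care, as it lies exactly outside the reach of Proposition \ref{xxsec3.22}.
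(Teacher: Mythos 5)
For $n\geq 3$ your argument is precisely the paper's own proof: the paper gives no argument for Corollary \ref{xxsec3.23} beyond the words ``In particular,'' i.e.\ it is meant to be read off from Proposition \ref{xxsec3.22} via exactly the decomposition you use, $\mathcal{M}_n(\mathcal{R})=\mathcal{G}(A,M,N,B)$ with $A=e_{11}\mathcal{M}_n(\mathcal{R})e_{11}\cong\mathcal{R}$ and $B\cong\mathcal{M}_{n-1}(\mathcal{R})$ noncommutative. Your verification of the hypotheses (properness of commuting linear maps on the prime algebra $B$, the torsion condition on the row-vector bimodule $M$, the witnesses $m_0=(1,0,\dots,0)$ and $b_0=E_{12}$, and centrality of $\mathcal{G}$ and $B$ over $\mathcal{R}$) is correct and is in fact more careful than anything written in the paper; the only micro-caveat is that Bre\v{s}ar's prime-ring theorem a priori produces coefficients in the extended centroid (here the quotient field of $\mathcal{R}$) rather than in $\mathcal{Z}(B)=\mathcal{R}I_{n-1}$, so one routine extra step is needed to see the coefficients actually lie in $\mathcal{R}$.

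The genuine gap is $n=2$, and here your write-up stops short of a proof --- although, to your credit, you are the one who noticed the problem, whereas the paper silently ignores it: since every nontrivial idempotent of $\mathcal{M}_2(\mathcal{R})$ has rank one, every generalized matrix decomposition of $\mathcal{M}_2(\mathcal{R})$ has both diagonal corners isomorphic to the commutative algebra $\mathcal{R}$, so neither Theorem \ref{xxsec3.4} nor Proposition \ref{xxsec3.22} can ever be applied, and the paper's ``in particular'' is simply unjustified for $n=2$ (this matters downstream, because Corollary \ref{xxsec3.26} invokes Corollary \ref{xxsec3.23} in every finite dimension, including $2$). However, your proposed repair does not work as stated: ``appealing to the theory of centralizing traces on prime rings'' means Bre\v{s}ar's theorem on traces of biadditive maps, and that theorem explicitly excludes prime rings satisfying the standard identity $S_4$ --- which $\mathcal{M}_2(\mathcal{R})$ satisfies. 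The $2\times 2$ case is exactly the low-degree situation that the prime-ring machinery cannot reach, so if the corollary is to hold for $n=2$ it needs a separate, hands-on argument (e.g.\ passing to the quotient field, using $2$-torsionfreeness to reduce centralizing to commuting, writing the traceless part of ${\mathfrak T}_{\mathfrak q}(x)$ as a multiple of the traceless part of $x$, and exploiting coprimality of the coordinate linear forms to see the multiplier is a polynomial). Neither you nor the paper supplies such an argument; in short, your proof coincides with the paper's for $n\geq 3$ and is correct there, while the $n=2$ fringe is an open gap common to both, with your suggested prime-ring route being a step that would actually fail.
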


\begin{corollary}\label{xxsec3.24}
Let $\mathcal{R}$ be a $2$-torsionfree commutative domain, $V$ be an
$\mathcal{R}$-linear space and $B(\mathcal{R}, V, \gamma)$ be the
inflated algebra of $\mathcal{R}$ along $V$. Suppose that
${\mathfrak q}\colon  B(\mathcal{R}, V, \gamma)$ $\times
B(\mathcal{R}, V, \gamma)\longrightarrow B(\mathcal{R}, V, \gamma)$
is an $\mathcal{R}$-bilinear mapping. Then every centralizing trace
$\mathfrak{T_q}\colon B(\mathcal{R}, V,
\gamma)\longrightarrow B(\mathcal{R}, V, \gamma)$ of $\mathfrak{q}$
is proper.
\end{corollary}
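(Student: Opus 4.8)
\noindent\textbf{A proof strategy for Corollary \ref{xxsec3.24}.} The plan is to present the inflated algebra $B(\mathcal{R}, V, \gamma)$ as a generalized matrix algebra in the sense of Section \ref{xxsec2} and then to extract the conclusion from the machinery already developed, rather than repeating the long computation of Theorem \ref{xxsec3.4}. Concretely, $B(\mathcal{R}, V, \gamma)$ is realized as $\mathcal{G}(\mathcal{R}, V, V, \mathcal{R})$, whose two diagonal corners are $\mathcal{R}$, whose two off-diagonal bimodules are copies of $V$, and whose Morita pairings $V \otimes_{\mathcal{R}} V \to \mathcal{R}$ are both furnished by the bilinear form $\gamma$. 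The first step is to harvest the hypotheses that come for free from $\mathcal{R}$ being a $2$-torsionfree commutative domain: $\mathcal{G}$ is at once $2$-torsionfree, and for $r, s \in \mathcal{R}$ the identity $rvs = rs\,v$ together with $V \neq 0$ shows that $rVs = 0$ forces $rs = 0$, hence $r = 0$ or $s = 0$. Thus $V$ is loyal as an $(\mathcal{R}, \mathcal{R})$-bimodule, and the no--zero--divisor hypothesis (2) of Proposition \ref{xxsec3.22} holds automatically.

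The delicate point, and the place where I expect the real obstacle to sit, is that both corners of this realization are the \emph{commutative} ring $\mathcal{R}$. This means $\pi_A(\mathcal{Z(G)}) = \mathcal{Z}(A) = A$, so hypothesis (2) of Theorem \ref{xxsec3.4} is violated, and likewise the requirement of Proposition \ref{xxsec3.22} that $B$ be noncommutative fails; neither result can be quoted as a black box. I would therefore not invoke the theorems directly but rather revisit their proof, which is organized into the chain of Lemmas \ref{xxsec3.5}--\ref{xxsec3.19}, and isolate exactly where the noncommutativity of a corner was used. On inspection, most of that argument relies only on the loyalty of $V$ and on $\mathcal{R}$ being a domain, both of which remain in force here, while commutativity of $\mathcal{R}$ actually simplifies matters: every commuting linear map of $\mathcal{R}$ is trivially proper, and every component $f_{ij}, g_{ij}, h_{ij}, k_{ij}$ takes values in $\mathcal{R} = \mathcal{Z}(\mathcal{R})$, which collapses many of the centrality statements.

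Accordingly, the core of the proof is to rerun the centralizing-trace computation for $\mathcal{G}(\mathcal{R}, V, V, \mathcal{R})$, checking that each conclusion of Lemmas \ref{xxsec3.8}--\ref{xxsec3.19} survives, with every appeal to ``$A$ (or $B$) noncommutative'' replaced by the domain property of $\mathcal{R}$. Once the computation is pushed through, the decomposition $\mathfrak{T_q}(x) = zx^2 + \mu(x)x + \nu(x)$ assembled at the end of the proof of Theorem \ref{xxsec3.4} delivers the properness of every centralizing trace, which is exactly the assertion of the corollary.

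The single step I expect to be genuinely problematic is the final loyalty argument forcing $\nu$ to be central. In Theorem \ref{xxsec3.4} this used loyalty of $V$ to pass from a linearized identity to $\mathfrak{f}(a_2, a_3) = \varphi^{-1}(\mathfrak{k}(a_2, a_3))$ via Lemma \ref{xxsec3.1}, and that lemma explicitly assumes $B$ noncommutative. Since here $B = \mathcal{R}$ is commutative, this is the one spot where Lemma \ref{xxsec3.1} is unavailable and a genuinely different argument must be supplied, exploiting instead that $\mathcal{R}$ is a domain and $V$ is free; supplying that replacement is where the real work of the corollary lies.
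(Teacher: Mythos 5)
Your argument breaks down at the very first step: the identification of $B(\mathcal{R},V,\gamma)$ with $\mathcal{G}(\mathcal{R},V,V,\mathcal{R})$. The inflated algebra of $\mathcal{R}$ along $V$ (in the Koenig--Xi sense, which is the sense used in \cite{LiWei1, XiaoWei1, XiaoWei2}, where these algebras are exhibited as generalized matrix algebras) has underlying module $(V\otimes_{\mathcal{R}}V)\oplus\mathcal{R}$ with multiplication induced by $(u\otimes v)(u'\otimes v')=\gamma(v,u')\,u\otimes v'$; it is not the Morita-context algebra with two corners $\mathcal{R}$ and two off-diagonal copies of $V$. Worse, the object you propose does not even exist for a nonzero form once $V$ has rank at least $2$: the compatibility conditions in the Morita diagrams read $\gamma(m,n)m'=\gamma(n,m')m$ for all $m,m',n\in V$, and taking $m,m'$ to be distinct basis elements forces $\gamma\equiv 0$. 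Consequently your central structural claim --- that both corners are commutative, so that neither Theorem \ref{xxsec3.4} nor Proposition \ref{xxsec3.22} can be quoted and the whole chain of Lemmas \ref{xxsec3.5}--\ref{xxsec3.19} must be redone --- is an artifact of a wrong realization, not a feature of the problem.

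The intended route is exactly the opposite of what you expect, and it needs no rerunning of the computation. Choosing $v_0\in V$ with $\gamma(v_0,v_0)=1$, the element $e=v_0\otimes v_0$ is a nontrivial idempotent of $B(\mathcal{R},V,\gamma)$, and the Peirce decomposition with respect to $e$ and $f=1-e$ presents $B(\mathcal{R},V,\gamma)$ as a generalized matrix algebra whose $(1,1)$-corner $eBe$ is isomorphic to $\mathcal{R}$ and whose $(2,2)$-corner $fBf$ is again an inflated algebra, along $\{w\in V:\gamma(w,v_0)=0\}$, hence noncommutative. This asymmetric situation --- one commutative corner, one noncommutative corner --- is precisely what Proposition \ref{xxsec3.22} was formulated for: condition (2) holds because $\mathcal{R}$ is a domain and the relevant modules are free, condition (3) is witnessed by explicit elements, and condition (1), the properness of commuting linear maps on $fBf$, is the known commuting-map result for inflated algebras from \cite{XiaoWei1}. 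So the corollary is a black-box application of Proposition \ref{xxsec3.22}. Finally, even on its own terms your proposal is not a proof: you explicitly leave unsupplied the substitute for Lemma \ref{xxsec3.1} in the final loyalty step (``where the real work of the corollary lies''), and the intermediate steps are equally problematic, since Lemmas \ref{xxsec3.10}, \ref{xxsec3.11} and \ref{xxsec3.13} each begin by choosing a pair of noncommuting elements in one of the corners, which do not exist in the algebra you set up. A strategy that ends by postponing the decisive argument is a gap, not a proof.
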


Let us see the centralizing trace of bilinear mappings of several unital algebras with nontrivial idempotents.

\begin{corollary}\label{xxsec3.25}
Let $\mathcal{A}$ be a $2$-torsionfree unital prime algebra over a
commutative ring $\mathcal{R}$. Suppose that $\mathcal{A}$ contains
a nontrivial idempotent $e$ and that $f=1-e$. If
$e\mathcal{Z(A)}e=\mathcal{Z}(e\mathcal{A}e)\neq e\mathcal{A}e$ and
$f\mathcal{Z(A)}f=\mathcal{Z}(f\mathcal{A}f)\neq f\mathcal{A}f$,
then every centralizing trace of an arbitrary bilinear mappings on
$\mathcal{A}$ is proper.
\end{corollary}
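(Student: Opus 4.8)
The plan is to realize $\mathcal{A}$ as a generalized matrix algebra via its two-sided Peirce decomposition relative to $e$, and then to verify that the three hypotheses of Theorem \ref{xxsec3.4} are met. Writing $f=1-e$, one has
$$\mathcal{A}=e\mathcal{A}e\oplus e\mathcal{A}f\oplus f\mathcal{A}e\oplus f\mathcal{A}f,$$
and setting $A=e\mathcal{A}e$, $M=e\mathcal{A}f$, $N=f\mathcal{A}e$, $B=f\mathcal{A}f$, with all bimodule actions and the pairings $\Phi_{MN},\Psi_{NM}$ induced by the multiplication of $\mathcal{A}$, one obtains an $\mathcal{R}$-algebra isomorphism $\mathcal{A}\cong\mathcal{G}(A,M,N,B)$. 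Since $\mathcal{A}$ is $2$-torsionfree, so is $\mathcal{G}$, and it remains only to check conditions (1)--(3) on this realization.

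For condition (3) I would first note that for $a=eae\in A$ and $b=fbf\in B$ the trailing $e$ in $eae$ and the leading $f$ in $fbf$ give $aMb=(eae)(e\mathcal{A}f)(fbf)=(eae)\mathcal{A}(fbf)$. Hence $aMb=0$ forces $(eae)\mathcal{A}(fbf)=0$, and primeness of $\mathcal{A}$ yields $a=0$ or $b=0$; thus $M$ is loyal, and in particular faithful as a left $A$-module and as a right $B$-module, so the standing assumptions of Section \ref{xxsec3} hold. For condition (2), under the above isomorphism the center $\mathcal{Z(G)}$ corresponds to $\mathcal{Z(A)}$, while $\pi_A$ and $\pi_B$ correspond to the compressions $z\mapsto eze$ and $z\mapsto fzf$. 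Consequently $\pi_A(\mathcal{Z(G)})=e\mathcal{Z(A)}e$ and $\mathcal{Z}(A)=\mathcal{Z}(e\mathcal{A}e)$, so the stated hypothesis $e\mathcal{Z(A)}e=\mathcal{Z}(e\mathcal{A}e)\neq e\mathcal{A}e$ is precisely $\pi_A(\mathcal{Z(G)})=\mathcal{Z}(A)\neq A$, and symmetrically for $B$; the strict inequalities moreover force $A$ and $B$ to be noncommutative, as required by the lemmas underlying Theorem \ref{xxsec3.4}.

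For condition (1) I would use that a corner of a prime ring is again prime, via $(exe)\mathcal{A}(eye)=(exe)(e\mathcal{A}e)(eye)$ by the same compression trick, so that $A=e\mathcal{A}e$ and $B=f\mathcal{A}f$ are $2$-torsionfree prime algebras; then I would invoke the classical structure theory for commuting linear maps on prime rings to conclude each such map is proper. With (1)--(3) in hand, Theorem \ref{xxsec3.4} applies verbatim to $\mathcal{G}\cong\mathcal{A}$, giving that every centralizing trace of a bilinear map on $\mathcal{A}$ is proper.

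The step I expect to be the real content, rather than a formal Peirce computation, is condition (1): passing from "commuting linear maps on the corners are proper" to the \emph{center}-based notion of properness used throughout this paper, since the prime-ring theory a priori produces coefficients in the extended centroid of $e\mathcal{A}e$ rather than in $\mathcal{Z}(e\mathcal{A}e)$. I anticipate that the hypotheses equating $e\mathcal{Z(A)}e$ with $\mathcal{Z}(e\mathcal{A}e)$ (and the $f$-analogue) are exactly what reconcile this discrepancy and keep the proper form inside the center; carefully establishing that identification, as opposed to the largely routine verifications of (2) and (3), is where I would concentrate the argument.
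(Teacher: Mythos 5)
Your proposal is, in structure, exactly the paper's proof: the paper likewise passes to the Peirce decomposition $\mathcal{A}\cong\mathcal{G}(e\mathcal{A}e,\, e\mathcal{A}f,\, f\mathcal{A}e,\, f\mathcal{A}f)$, proves loyalty of $e\mathcal{A}f$ by the same compression-plus-primeness argument, obtains condition (1) of Theorem \ref{xxsec3.4} from primeness of the corners together with Bre\v{s}ar's theorem on commuting additive mappings of prime rings \cite[Theorem 3.2]{Bresar2}, and then applies Theorem \ref{xxsec3.4}. Your explicit verification of condition (2) --- identifying $\mathcal{Z(G)}$ with $\mathcal{Z(A)}$, the projections $\pi_A,\pi_B$ with the compressions by $e$ and $f$, so that the corollary's hypotheses become verbatim $\pi_A(\mathcal{Z(G)})=\mathcal{Z}(A)\neq A$ and its $B$-analogue --- is left implicit in the paper, but it is precisely what those hypotheses are for, and your treatment of it is correct and slightly more careful than the original.

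The one place you depart from the paper is your final paragraph, and there your expectation is misplaced in both directions. First, the paper performs no reconciliation between the extended centroid and the center: its entire treatment of condition (1) is the citation of \cite[Theorem 3.2]{Bresar2}, so the subtlety you flag (that theorem yields $\lambda$ and the central map with values in the extended centroid, whereas ``proper'' in this paper, as consumed for instance in Lemma \ref{xxsec3.10}, demands coefficients in $\mathcal{Z}(e\mathcal{A}e)$) is genuine but is simply elided in the published proof; the authors confront it only in Corollary \ref{xxsec3.26}, where central closedness of $\mathcal{B}(X)$ is available. Second, the mechanism you anticipate cannot close that gap: the hypothesis $e\mathcal{Z(A)}e=\mathcal{Z}(e\mathcal{A}e)\neq e\mathcal{A}e$ is wholly used up in verifying condition (2), and it imposes no constraint on the extended centroid of the prime corner $e\mathcal{A}e$, which can remain strictly larger than $\mathcal{Z}(e\mathcal{A}e)$ even when that hypothesis holds (take $\mathcal{A}=M_4(D)$ with $D$ the ring of integer quaternions and $e$ of rank two: both center equalities hold, yet the corner $M_2(D)$ has center $\mathbb{Z}e$ and extended centroid $\mathbb{Q}$). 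So if one insists on center-valued properness of commuting maps on the corners, an additional argument is required --- for example assuming $\mathcal{A}$ centrally closed, or exploiting unitality of the corners to force the extended-centroid scalar into the center --- and neither your proposal nor the paper's own proof supplies it; on this point you and the paper are in the same position, except that you noticed the issue and then attributed its resolution to hypotheses that do a different job.
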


\begin{proof}
Let us write $\mathcal{A}$ as a natural generalized matrix algebra
$\left[
\begin{array}
[c]{cc}%
eAe & eAf\\
fAe & fAf\\
\end{array}
\right]$. It is clear that $e\mathcal{A}e$ and $f\mathcal{A}f$ are
prime algebras. By \cite[Theorem 3.2]{Bresar2} it follows that each
commuting additive mapping on $e\mathcal{A}e$ and $f\mathcal{A}f$ is
proper. On the other hand, if $(eae)e\mathcal{A}f(fbf)=0$ holds for
all $a,b\in \mathcal{A}$, then the primeness of $\mathcal{A}$
implies that $eae=0$ or $fbf=0$. This shows $e\mathcal{A}f$ is a
loyal $(e\mathcal{A}e, f\mathcal{A}f)$-bimodule. Applying Theorem
\ref{xxsec3.4} yields that each centralizing trace of an arbitrary
bilinear mappings on $\mathcal{A}$ is proper.
\end{proof}

\begin{corollary}\label{xxsec3.26}
Let $X$ be a Banach space over the real or complex field
$\mathbb{F}$, $\mathcal{B}(X)$ be the algebra of all bounded linear
operators on $X$. Then every ccentralizing trace of an arbitrary
bilinear mapping on $\mathcal{B}(X)$ is proper.
\end{corollary}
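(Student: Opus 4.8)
The plan is to view $\mathcal{A}:=\mathcal{B}(X)$ as a unital prime algebra over $\mathbb{F}$ and to reduce the statement to the two situations already settled in this section: the full matrix algebra case (Corollary \ref{xxsec3.23}) and the prime algebra with a nontrivial idempotent case (Corollary \ref{xxsec3.25}), both of which rest on Theorem \ref{xxsec3.4}. First I would record the structural facts we shall use. The algebra $\mathcal{A}$ is unital with identity $I$; it is prime, since for $S,T\in\mathcal{B}(X)$ the relation $S\,\mathcal{B}(X)\,T=0$, tested against rank-one operators $x\otimes\phi$, gives $(Sx)\otimes(T^{*}\phi)=0$ for all $x\in X$ and $\phi\in X^{*}$, forcing $S=0$ or $T=0$; its center is $\mathcal{Z(A)}=\mathbb{F}I$; and it is $2$-torsionfree because $\mathbb{F}$ has characteristic $0$. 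These are exactly the ambient hypotheses demanded by Corollaries \ref{xxsec3.23} and \ref{xxsec3.25}.

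If $\dim X=n<\infty$, then $\mathcal{B}(X)\cong\mathcal{M}_{n}(\mathbb{F})$, and since $\mathbb{F}$ (real or complex) is a $2$-torsionfree commutative domain, Corollary \ref{xxsec3.23} applies verbatim and shows that every centralizing trace of an arbitrary bilinear mapping on $\mathcal{B}(X)$ is proper. The degenerate value $n=1$ is absorbed here: then $\mathcal{B}(X)=\mathbb{F}$ is commutative and any trace $\mathfrak{T}_{\mathfrak q}(a)=\mathfrak{q}(a,a)=\mathfrak{q}(1,1)a^{2}$ is visibly proper. So it only remains to treat $\dim X=\infty$.

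For the infinite-dimensional case I would manufacture a suitable idempotent without assuming anything about the decomposability of $X$. Choose a two-dimensional subspace $X_{1}\subset X$; being finite-dimensional it is complemented, so $X=X_{1}\oplus X_{2}$ as a topological direct sum with $\dim X_{2}=\infty$. Let $e\in\mathcal{B}(X)$ be the bounded projection onto $X_{1}$ along $X_{2}$ and put $f=I-e$. Then $e\mathcal{A}e\cong\mathcal{B}(X_{1})\cong\mathcal{M}_{2}(\mathbb{F})$ and $f\mathcal{A}f\cong\mathcal{B}(X_{2})$; both are prime, and since $\dim X_{1}=2$ and $\dim X_{2}\geq2$, both are non-commutative. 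Moreover $e\mathcal{Z(A)}e=\mathbb{F}e$, which under the identification $e\mathcal{A}e\cong\mathcal{B}(X_{1})$ corresponds to $\mathbb{F}I_{X_{1}}=\mathcal{Z}(\mathcal{B}(X_{1}))$; hence $e\mathcal{Z(A)}e=\mathcal{Z}(e\mathcal{A}e)\neq e\mathcal{A}e$, and symmetrically $f\mathcal{Z(A)}f=\mathcal{Z}(f\mathcal{A}f)\neq f\mathcal{A}f$. All hypotheses of Corollary \ref{xxsec3.25} are now in force, and that corollary delivers the conclusion.

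The substantive content is thus supplied entirely by Theorem \ref{xxsec3.4} and its corollaries; what remains is only the routine verification of the hypotheses of Corollary \ref{xxsec3.25}. The single point requiring care is the non-commutativity of \emph{both} compressions $e\mathcal{A}e$ and $f\mathcal{A}f$, which forces one to work with a decomposition $X=X_{1}\oplus X_{2}$ whose summands each have dimension at least $2$. This is always arrangeable once $\dim X$ is infinite, by splitting off a finite-dimensional piece as above, but it is impossible for $\mathcal{M}_{2}(\mathbb{F})$ and $\mathcal{M}_{3}(\mathbb{F})$, which is precisely why the finite-dimensional case must be routed separately through the full-matrix-algebra result of Corollary \ref{xxsec3.23}.
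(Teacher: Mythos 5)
Your proposal is correct and follows essentially the same route as the paper: the paper's proof is exactly the two-case reduction you give, citing Corollary \ref{xxsec3.23} when $\dim X=n<\infty$ and Corollary \ref{xxsec3.25} when $\dim X=\infty$. The only difference is that the paper leaves the verification of the hypotheses of Corollary \ref{xxsec3.25} implicit, whereas you construct the idempotent explicitly by splitting off a complemented two-dimensional subspace --- which is in fact the robust choice, since an infinite-dimensional Banach space need not decompose into two infinite-dimensional complemented subspaces.
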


\begin{proof}
Note that $\mathcal{B}(X)$ is a centrally closed prime algebra. If
$X$ is infinite dimensional, the result follows from Corollary
\ref{xxsec3.25}. If $X$ is of dimension $n$, then $\mathcal{B}(X)=
{\mathcal M}_n(\mathbb{F})$. In this case the result follows from
Corollary \ref{xxsec3.23}.
\end{proof}

\vspace{2mm}

\section{Lie Triple Isomorphisms on Generalized Matrix Algebras}
\label{xxsec4}

In this section we shall use the main result in Section \ref{xxsec3}
(Theorem \ref{xxsec3.4}) to describe the form of an arbitrary Lie
triple isomorphism of a certain class of generalized matrix algebras
(Theorem \ref{xxsec4.4}). As applications of Theorem \ref{xxsec4.4},
we characterize Lie isomorphisms of certain generalized matrix
algebras. The involved algebras include upper triangular matrix
algebras, nest algebras, full matrix algebras, inflated algebras,
prime algebras with nontrivial idempotents.

\begin{lemma}\label{xxsec4.1}
Let $\mathcal{G}=\mathcal{G}(A, M, N, B)$ be a $2$-torsionfree generalized matrix algebra over a
commutative ring $\mathcal{R}$. Then $\mathcal{G}$ does not contain nonzero central Jordan ideals.
\end{lemma}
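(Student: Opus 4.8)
The plan is to argue by contradiction. Recall that $J$ being a Jordan ideal means $z\circ G:=zG+Gz\in J$ for all $z\in J$ and all $G\in\mathcal{G}$, and that $J$ being central means $J\subseteq\mathcal{Z(G)}$. So suppose $J$ is a nonzero central Jordan ideal of $\mathcal{G}$ and pick $0\neq z\in J$. Since $J\subseteq\mathcal{Z(G)}$, we may write $z=a\oplus b$ with $a\in A$, $b\in B$ satisfying $am=mb$ and $na=bn$ for all $m\in M$, $n\in N$. The idea is to feed strictly off-diagonal matrices into the Jordan-ideal relation and then invoke the faithfulness of $M$ together with $2$-torsionfreeness to force $z=0$.

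First I would record the effect of the Jordan-ideal condition. For every $G\in\mathcal{G}$ the element $z\circ G=zG+Gz$ lies in $J$; but $z$ is central, so $zG=Gz$ and hence $z\circ G=2zG\in J\subseteq\mathcal{Z(G)}$. Thus $2zG$ is a central, hence diagonal, matrix for every $G\in\mathcal{G}$.

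Next I would test this against the strictly upper-triangular element $G=\left[\begin{smallmatrix}0 & m\\ 0 & 0\end{smallmatrix}\right]$ with $m\in M$. A direct computation gives $2zG=\left[\begin{smallmatrix}0 & 2am\\ 0 & 0\end{smallmatrix}\right]$, and since $\mathcal{Z(G)}$ consists only of diagonal matrices, the off-diagonal entry must vanish: $2am=0$ for all $m\in M$. As $\mathcal{G}$ is $2$-torsionfree this yields $am=0$ for all $m$, and the faithfulness of $M$ as a left $A$-module forces $a=0$. Feeding $a=0$ back into the centrality relation $am=mb$ gives $mb=0$ for all $m\in M$, and the faithfulness of $M$ as a right $B$-module forces $b=0$. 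Hence $z=a\oplus b=0$, contradicting the choice of $z$, so $J=0$.

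The computation itself is routine; the only points that require care are the collapse $z\circ G=2zG$, which genuinely uses that $z$ is central, and the observation that $\mathcal{Z(G)}$ is purely diagonal, so that the off-diagonal term is actually detected by the centrality requirement. It is worth noting that the argument uses only the standing hypothesis that $M$ is faithful on both sides and never touches $N$, which is consistent with the fact that no condition is imposed on $N$ in this section.
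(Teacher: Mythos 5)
Your proof is correct and follows essentially the same route as the paper: both test the Jordan product of a central element $z=a\oplus b$ against a strictly upper-triangular matrix $\left[\begin{smallmatrix}0 & m\\ 0 & 0\end{smallmatrix}\right]$, use centrality to collapse the product to $2am$, and then invoke $2$-torsionfreeness plus faithfulness of $M$ to force $z=0$. The only cosmetic difference is that the paper writes the central element as $\alpha\oplus\varphi(\alpha)$, so the vanishing of the $B$-component is automatic once $\alpha=0$, whereas you derive $b=0$ explicitly from $mb=am=0$ and right-faithfulness; these are the same argument.
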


\begin{proof}
Let $\mathcal{I}$ be a central Jordan ideal of $\mathcal{G}=\mathcal{G}(A, M, N, B)$.
Suppose $\alpha\oplus \varphi(\alpha)\in \mathcal{I}$. Thus
$$
\left[\begin{array}{cc}
\alpha & 0\\
   0   & \varphi(\alpha)\end{array}\right]\circ \left[\begin{array}{cc}
0 & m\\
 0 & 0\end{array}\right]=\left[\begin{array}{cc}
0 & \alpha m+m\varphi(\alpha)\\
 0 & 0\end{array}\right]
$$
for all $m\in \mathcal{G}$. Thus yields $2\alpha M=0$
and hence $\alpha=0=\alpha\oplus \varphi(\alpha)$.
\end{proof}

\begin{lemma}\cite[Lemma 4.1]{XiaoWei2}\label{xxsec4.2}
Let $\mathcal{G}=\mathcal{G}(A, M, N, B)$ be a $2$-torsionfree generalized matrix algebra over a
commutative ring $\mathcal{R}$. Then $\mathcal{G}$ satisfies the
polynomial identity $[[x^2,y],[x,y]]$ if and only if both $A$ and
$B$ are commutative.
\end{lemma}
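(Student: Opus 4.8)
The plan is to first replace the stated identity by a more tractable one. A single application of the Leibniz rule $[x^2,y]=x[x,y]+[x,y]x$ gives, in any associative ring,
$$
[[x^2,y],[x,y]]=[x,[x,y]^2],
$$
so $\mathcal{G}$ satisfies the polynomial identity $[[x^2,y],[x,y]]$ if and only if $[x,[x,y]^2]=0$ for all $x,y\in\mathcal{G}$. I would run the whole argument on the right-hand side, writing $u=[x,y]$ throughout. For the implication ``$A,B$ commutative $\Rightarrow$ identity'', write $x=\left[\begin{smallmatrix} a & m\\ n & b\end{smallmatrix}\right]$ and $y=\left[\begin{smallmatrix} a' & m'\\ n' & b'\end{smallmatrix}\right]$; commutativity of $A$ and $B$ annihilates the corner commutators $[a,a']$ and $[b,b']$, so that $u=\left[\begin{smallmatrix} mn'-m'n & \mu\\ \nu & nm'-n'm\end{smallmatrix}\right]$ for suitable $\mu\in M,\ \nu\in N$. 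The assertion $[x,u^2]=0$ is then a direct verification carried out block by block, the cross terms cancelling only after repeated use of the associativity (balance) relations of the Morita context encoded in the commuting diagrams of Section~\ref{xxsec2}, namely $(mn')m''=m(n'm'')$ and $n(mn')=(nm)n'$ and their mirror images. In the degenerate triangular case $N=0$ this is immediate, since then $u$ is strictly upper triangular and $u^2=0$; in the full $2\times2$ matrix case it is the familiar fact that a commutator is trace-free, so $u^2$ is a scalar and commutes with everything.

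For the converse I would feed the identity carefully chosen test elements and exploit faithfulness. Taking $x=\left[\begin{smallmatrix} a & 0\\ 0 & 0\end{smallmatrix}\right]$ and $y=\left[\begin{smallmatrix} a' & m\\ 0 & 0\end{smallmatrix}\right]$, one finds $u=[x,y]=\left[\begin{smallmatrix} [a,a'] & am\\ 0 & 0\end{smallmatrix}\right]$, and the $(1,2)$-block of $[x,u^2]$ equals $\bigl(a[a,a']a\bigr)m$. Since this must vanish for every $m\in M$ and $M$ is faithful as a left $A$-module, I obtain $a[a,a']a=0$ for all $a,a'\in A$. Replacing $a$ by $a+1$ gives $a[a,a']+[a,a']a+[a,a']=0$, and replacing $a$ by $a+1$ once more and subtracting yields $2[a,a']=0$; as $\mathcal{G}$ (hence its corner $A$) is $2$-torsionfree, $[a,a']=0$, so $A$ is commutative. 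The symmetric choice $x=\left[\begin{smallmatrix} 0 & 0\\ 0 & b\end{smallmatrix}\right]$, $y=\left[\begin{smallmatrix} 0 & m\\ 0 & b'\end{smallmatrix}\right]$ produces the $(1,2)$-block $m\,b[b,b']b$, and faithfulness of $M$ as a right $B$-module together with the same linearization forces $[b,b']=0$, giving commutativity of $B$.

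The main obstacle is the ``if'' direction. Although conceptually it only asserts that $[x,y]^2$ commutes with $x$, it cannot be shortcut through Cayley--Hamilton as in $\mathcal{M}_2$: here the two diagonal blocks of $u^2$ lie in the a priori unrelated rings $A$ and $B$, so $u^2$ need not be central, and one is forced to track the explicit cancellation of all four blocks of $xu^2-u^2x$ using the Morita balance identities and commutativity of $A,B$. By contrast the ``only if'' direction is short once the correct test elements are in hand, its only delicate points being the precise invocation of the left/right faithfulness of $M$ and of $2$-torsionfreeness in the linearization.
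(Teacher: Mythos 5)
Your reduction $[[x^2,y],[x,y]]=[x,[x,y]^2]$ is correct, and your proof of the ``only if'' half is complete and correct: the chosen test elements really do produce the $(1,2)$-blocks $(a[a,a']a)m$ and $m(b[b,b']b)$, the standing faithfulness hypothesis on $M$ (carried over from Section~\ref{xxsec3}, and genuinely needed here) kills them, and the two substitutions $a\mapsto a+1$ together with $2$-torsionfreeness give $[a,a']=0$, likewise $[b,b']=0$. For calibration: the paper itself contains no proof of this lemma at all --- it is imported as \cite[Lemma 4.1]{XiaoWei2} --- so the only benchmark is the cited source, whose proof of the hard direction is exactly the four-block computation that you describe but never perform.

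That is the genuine gap: the ``if'' direction is asserted, not proved, and you concede as much by calling it ``the main obstacle''. Your two special cases do not bridge it: when $M$ and $N$ are both nonzero, $u=[x,y]$ has diagonal $(mn'-m'n,\,nm'-n'm)$, which does \emph{not} vanish when $A$ and $B$ are commutative, so neither the triangular argument ($u^2=0$) nor the $\mathcal{M}_2$ argument ($u^2$ central) applies, and one must check that all four blocks of $xu^2-u^2x$ vanish. This is true, but it is not a formality. For instance, the $(1,1)$-block reduces, using only bilinearity and the bimodule properties of the pairings plus commutativity of $A$, to
$$
(mn')[(m'b)n]-(mn')[(mb')n]-(m'n)[(mb)n']+(m'n)[(mb')n]+(mn)[(mb')n']-(mn)[(m'b')n],
$$
and this vanishes only because commutativity of \emph{both} $A$ and $B$ combined with the balance relations $(mn)m''=m(nm'')$ and $(nm)n''=n(mn'')$ yields exchange identities such as $(mn')\cdot[(m'b)n]=[(mb)n']\cdot(m'n)$, which cancel the six terms in pairs; the $(1,2)$-, $(2,1)$- and $(2,2)$-blocks require analogous, equally nonobvious cancellations. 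Until these cancellations are displayed, the half of the lemma that carries the actual difficulty remains unproven in your write-up.
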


\begin{proposition}\label{xxsec4.3}
Let $\mathcal{G}=\mathcal{G}(A, M, N, B)$ and $\mathcal{G}^\prime=\mathcal{G}^\prime(A^\prime, M^\prime, N^\prime, B^\prime)$ be two generalized matrix algebras over a commutative ring
$\mathcal{R}$ with $\frac{1}{2}\in \mathcal{R}$ and let
$\mathfrak{l}:\mathcal{G}\longrightarrow \mathcal{G}^\prime$ be a
Lie triple isomorphism. If
\begin{enumerate}
\item[(1)] each centralizing trace of arbitrary bilinear mapping on $\mathcal{G}^\prime$ is proper,
\item[(2)] at least one of $A, B$ and at least one of  $A^\prime, B^\prime$ are noncommutative,
\item[(3)] $M^\prime$ is loyal,
\end{enumerate}
then $\mathfrak{l}=\pm \mathfrak{m}+\mathfrak{n}$, where
$\mathfrak{m}\colon \mathcal{G}\longrightarrow \mathcal{G}^\prime$ is a Jordan
homomorphism, $\mathfrak{m}$ is one-to-one, and $\mathfrak{n}\colon
\mathcal{G}\longrightarrow \mathcal{Z(G^\prime)}$ is a linear
mapping vanishing on each second commutator. Moreover, if
$\mathcal{G}^\prime$ is central over $\mathcal{R}$, then
$\mathfrak{m}$ is onto.
\end{proposition}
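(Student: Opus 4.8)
The plan is to convert the hypothesis on $\mathfrak{l}$ into a centralizing trace on $\mathcal{G}^\prime$, invoke condition (1), and then unwind the resulting proper form.

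\emph{Building a centralizing trace.} First I would observe that, since $\mathfrak{l}$ is an $\mathcal{R}$-linear bijection, the assignment
$$
\mathfrak{q}^\prime(u,v)=\tfrac12\mathfrak{l}\bigl(\mathfrak{l}^{-1}(u)\mathfrak{l}^{-1}(v)+\mathfrak{l}^{-1}(v)\mathfrak{l}^{-1}(u)\bigr)
$$
is a symmetric $\mathcal{R}$-bilinear map on $\mathcal{G}^\prime$ whose trace is $\mathfrak{T}_{\mathfrak{q}^\prime}(u)=\mathfrak{l}\bigl((\mathfrak{l}^{-1}(u))^2\bigr)$. Writing $u=\mathfrak{l}(x)$ and using $[x^2,x]=0$, the Lie triple identity gives $[[\mathfrak{l}(x^2),\mathfrak{l}(x)],\mathfrak{l}(c)]=\mathfrak{l}([[x^2,x],c])=0$ for every $c\in\mathcal{G}$; since $\mathfrak{l}$ is onto, this says $[\mathfrak{T}_{\mathfrak{q}^\prime}(u),u]\in\mathcal{Z(G^\prime)}$. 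Thus $\mathfrak{T}_{\mathfrak{q}^\prime}$ is a centralizing trace, and condition (1) yields
$$
\mathfrak{l}(x^2)=z\,\mathfrak{l}(x)^2+\hat\mu(x)\mathfrak{l}(x)+\hat\nu(x),
$$
where $z\in\mathcal{Z(G^\prime)}$ and $\hat\mu,\hat\nu\colon\mathcal{G}\to\mathcal{Z(G^\prime)}$ are the composites with $\mathfrak{l}$ of the central maps supplied by properness. Polarizing (replacing $x$ by $x+y$ and subtracting) gives, with $x\circ y=xy+yx$ and $\hat\nu_2$ the central-valued symmetric bilinear polarization of $\hat\nu$,
$$
\mathfrak{l}(x\circ y)=z\,(\mathfrak{l}(x)\circ\mathfrak{l}(y))+\hat\mu(x)\mathfrak{l}(y)+\hat\mu(y)\mathfrak{l}(x)+\hat\nu_2(x,y).
$$

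\emph{Pinning the leading coefficient.} \textbf{The crux of the argument, and the step I expect to be the main obstacle, is to prove $z=\pm1$.} I would substitute the two displayed identities back into $\mathfrak{l}([[a,b],c])=[[\mathfrak{l}(a),\mathfrak{l}(b)],\mathfrak{l}(c)]$, where the central summands are annihilated by the inner brackets, to produce a functional identity in which the only surviving scalar is a polynomial in $z$. Here condition (2) enters through Lemma \ref{xxsec4.2}: it guarantees that neither $\mathcal{G}$ nor $\mathcal{G}^\prime$ satisfies $[[x^2,y],[x,y]]$, which is exactly what keeps this identity from collapsing. Carrying out the analysis componentwise along $A^\prime,M^\prime,N^\prime,B^\prime$ (as in Section \ref{xxsec3}) should show first that $z$ is a central unit and then that $z^2=1$. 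Since $M^\prime$ is loyal, Lemma \ref{xxsec3.3} tells us $\mathcal{Z(G^\prime)}$ is a domain, so $z^2=1$ forces $z=1$ or $z=-1$.

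\emph{Assembling the decomposition.} With $z=\pm1$ I would set $\mathfrak{n}=-\tfrac12 z^{-1}\hat\mu$, an $\mathcal{R}$-linear map into $\mathcal{Z(G^\prime)}$, and $\mathfrak{m}=z(\mathfrak{l}-\mathfrak{n})=z\mathfrak{l}+\tfrac12\hat\mu$, so that $\mathfrak{l}=\pm\mathfrak{m}+\mathfrak{n}$. Feeding the polarized identity into $\mathfrak{m}(x\circ y)-\mathfrak{m}(x)\circ\mathfrak{m}(y)$, the $z^2$-term and all $\mathfrak{l}$-linear terms cancel, leaving only a central remainder; this remainder is forced to vanish by the loyalty of $M^\prime$ together with the absence of nonzero central Jordan ideals (Lemma \ref{xxsec4.1}), so $\mathfrak{m}(x\circ y)=\mathfrak{m}(x)\circ\mathfrak{m}(y)$, i.e. $\mathfrak{m}$ is a Jordan homomorphism. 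Because a Jordan homomorphism preserves second commutators, substituting $\mathfrak{l}=\pm\mathfrak{m}+\mathfrak{n}$ into the Lie triple identity and cancelling the sign-tracked term $\pm\mathfrak{m}([[a,b],c])$ leaves $\mathfrak{n}([[a,b],c])=0$, so $\mathfrak{n}$ annihilates every second commutator. For injectivity, note that $\mathfrak{m}(x)=0$ forces $\mathfrak{l}(x)=\mathfrak{n}(x)\in\mathcal{Z(G^\prime)}$, whence $[[x,a],c]=\mathfrak{l}^{-1}([[\mathfrak{l}(x),\mathfrak{l}(a)],\mathfrak{l}(c)])=0$ for all $a,c$; thus $\ker\mathfrak{m}$ is a central Jordan ideal of $\mathcal{G}$ and vanishes by Lemma \ref{xxsec4.1}. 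Finally, if $\mathcal{G}^\prime$ is central over $\mathcal{R}$, then $\mathfrak{n}$ takes values in $\mathcal{R}1^\prime$, and since $\mathfrak{l}$ is onto and $z=\pm1$ is a unit, $\mathfrak{m}(\mathcal{G})=z\,\mathfrak{l}(\mathcal{G})+\tfrac12\hat\mu(\mathcal{G})=\mathcal{G}^\prime$, so $\mathfrak{m}$ is onto.
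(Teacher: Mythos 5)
Your opening step (the trace $\mathfrak{T}(u)=\mathfrak{l}\bigl((\mathfrak{l}^{-1}(u))^{2}\bigr)$ is centralizing, hence by hypothesis (1) one gets $\mathfrak{l}(x^{2})=z\,\mathfrak{l}(x)^{2}+\hat\mu(x)\mathfrak{l}(x)+\hat\nu(x)$, then polarize) and your closing steps (the decomposition, $\mathfrak{n}$ killing second commutators, injectivity via Lemma \ref{xxsec4.1}, surjectivity when $\mathcal{G}^\prime$ is central) coincide with the paper's. But the heart of the proof is missing, and both shortcuts you propose for it fail. First, the claim that substituting the polarized identity into $\mathfrak{l}([[a,b],c])=[[\mathfrak{l}(a),\mathfrak{l}(b)],\mathfrak{l}(c)]$ leaves "only a polynomial in $z$" is not correct: writing $[[x,y],w]=x\circ(y\circ w)-y\circ(x\circ w)$ and expanding twice with your polarized formula, the central summands are \emph{not} annihilated (each $\hat\nu_2(y,w)$ reappears multiplied by $2z\,\mathfrak{l}(x)$), and what one actually obtains is
$$
(1-z^{2})[[\mathfrak{l}(x),\mathfrak{l}(y)],\mathfrak{l}(w)]
=\Lambda(y,w)\mathfrak{l}(x)-\Lambda(x,w)\mathfrak{l}(y)+c(x,y,w),
$$
where $c$ is central-valued and $\Lambda(x,y)=\hat\mu(x\circ y)-\hat\mu(x)\hat\mu(y)+2z\hat\nu_{2}(x,y)$ is an \emph{unknown} central-valued bilinear map. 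In fact $\Lambda=2\varepsilon$, where $\varepsilon(x,y)=\mathfrak{m}(x\circ y)-\mathfrak{m}(x)\circ\mathfrak{m}(y)$ for $\mathfrak{m}=z\mathfrak{l}+\tfrac12\hat\mu$. So extracting $z^{2}=1$ from this identity requires first proving $\Lambda=0$, i.e.\ precisely that $\mathfrak{m}$ is a Jordan homomorphism; the paper accordingly proves $\lambda=\pm1$ only \emph{after} establishing the Jordan property, and your order of argument is backwards.

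Second, your mechanism for killing the remainder --- "loyalty of $M^\prime$ together with the absence of nonzero central Jordan ideals (Lemma \ref{xxsec4.1})" --- is not an argument: Lemma \ref{xxsec4.1} concerns Jordan ideals \emph{contained in the center}, and the image of the symmetric bilinear map $\varepsilon$ is not a Jordan ideal in any sense, so the lemma simply does not apply. The vanishing of $\varepsilon$ is the genuine analytic core of the paper's proof (its relations $(4.5)$--$(4.11)$): one derives $\varepsilon(x,x)\mathfrak{m}(x)^{2}-\varepsilon(x,x^{2})\mathfrak{m}(x)\in\mathcal{Z(G^\prime)}$, hence $\lambda^{3}\varepsilon(x,x)[[\mathfrak{l}(x)^{2},u],[\mathfrak{l}(x),u]]=0$; then, using noncommutativity of $A^\prime$ (condition (2)), one picks $a_{1},a_{2}\in A^\prime$ with $a_{1}[a_{1},a_{2}]a_{1}\neq 0$ and the special elements $\mathfrak{l}(x_{0})$, $u_{0}$, and uses loyalty of $M^\prime$ together with Lemmas \ref{xxsec3.2} and \ref{xxsec3.3}, plus repeated commutation with $u_{0}$ and $[\mathfrak{m}(x_{0}),u_{0}]$, to get $\varepsilon(x_{0},\cdot)=0$ and eventually $\varepsilon\equiv 0$. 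Nothing in your proposal carries out or replaces this computation; the step you yourself flag as "the main obstacle" is exactly where the proposal has a genuine gap.
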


\begin{proof}
For arbitrary $x,z\in \mathcal{G}$, it is easy to see that
$\mathfrak{l}$ satisfies
$[[\mathfrak{l}(x^{2}),\mathfrak{l}(x)],\mathfrak{l}(z)]=\mathfrak{l}([[x^{2},x],z])=0$.
Since $\mathfrak{l}$ is onto,
$[\mathfrak{l}(x^{2}),\mathfrak{l}(x)]\in \mathcal{Z(G^\prime)}$ for
all $x\in \mathcal{G}$. Replacing $x$ by $\mathfrak{l}^{-1}(y)$, we
get $[\mathfrak{l}(\mathfrak{l}^{-1}(y)^{2}),y]\in
\mathcal{Z(G^\prime)}$ for all $y\in \mathcal{G^\prime}$. This implies
that the mapping
$\mathfrak{T_q}(y)=\mathfrak{l}(\mathfrak{l}^{-1}(y)^{2})$ is
centralizing. Since $\mathfrak{T_q}$ is also a trace of the bilinear
mapping $\mathfrak{q}\colon \mathcal{G'}\times \mathcal{G^\prime}
\longrightarrow \mathcal{G^\prime}$,
$\mathfrak{q}(y,z)=\mathfrak{l}(\mathfrak{l}^{-1}(y)\mathfrak{l}^{-1}(z))$,
by the hypothesis $(1)$ there exist $\lambda\in
\mathcal{Z(G^\prime)}$, a linear mapping $\mu_{1}:
\mathcal{G^\prime}\longrightarrow \mathcal{Z(G^\prime)}$, and a
trace $\nu_{1}: \mathcal{G^\prime}\longrightarrow
\mathcal{Z(G^\prime)}$ of a bilinear mapping such that
$$
\mathfrak{l}(\mathfrak{l}^{-1}(y)^{2})=\lambda
y^{2}+\mu_{1}(y)y+\nu_{1}(y) \eqno(4.1)
$$
for all $y\in \mathcal{G'}$. Let us write $\mu=\mu_1\mathfrak{l}$ and
$\nu=\nu_1\mathfrak{l}$. Then $\mu$ and $\nu$ are mappings of
$\mathcal{G}$ into $\mathcal{Z(G^\prime)}$ and $\mu$ is linear.
Hence $(4.1)$ can be rewritten as
$$
\mathfrak{l}(x^{2})=\lambda\mathfrak{l}(x)^{2}+\mu(x)\mathfrak{l}(x)+\nu(x)
\eqno(4.2)
$$
for all $x\in \mathcal{G}$. We conclude that $\lambda\neq 0$.
Otherwise, we have $\mathfrak{l}(x^{2})-\mu(x)\mathfrak{l}(x)\in
\mathcal{Z(G^\prime)}$ by $(4.2)$ and hence
$$
\begin{aligned}
\mathfrak{l}([[x^2,y],[x,y]])
&=[[\mathfrak{l}(x^2),\mathfrak{l}(y)],\mathfrak{l}([x,y])]\\
&=[[\mu(x)\mathfrak{l}(x),\mathfrak{l}(y)],\mathfrak{l}([x,y])] \\
&=\mu(x)[[\mathfrak{l}(x),\mathfrak{l}(y)],\mathfrak{l}([x,y])]\\
&=\mu(x)\mathfrak{l}([[x,y],[x,y]])\\
&=0
\end{aligned}
$$
for all $x,y\in \mathcal{G}$. Consequently, $[[x^2,y],[x,y]]=0$ for
all $x,y\in \mathcal{G}$. According to our assumption this
contradicts with {\rm (\cite[Lemma 2.7]{BenkovicEremita1})}. Thus
$\lambda\neq 0$.

Now we define a linear mapping $\mathfrak{m}\colon
\mathcal{G}\longrightarrow \mathcal{G^{'}}$ by
$$
\mathfrak{m}(x)=\lambda\mathfrak{l}(x)+\frac{1}{2}\mu(x) \eqno(4.3)
$$
for the $x\in \mathcal{G}$. Of course, $\mathfrak{m}$ is a linear
mapping. Our goal is to show that $\mathfrak{m}$ is a Jordan
homomorphism. In view of $(4.2)$ and $(4.3)$, we have
$$
\mathfrak{m}(x^{2})=\lambda\mathfrak{l}(x^{2})+\frac{1}{2}\mu(x)=
\lambda^{2}\mathfrak{l}(x)^{2}+\lambda\mu(x)\mathfrak{l}(x)+\lambda\nu(x)+\frac{1}{2}\mu(x^{2}),
$$
while
$$
\mathfrak{m}(x)^{2}=(\lambda\mathfrak{l}(x)+\frac{1}{2}\mu(x))^{2}
=\lambda^{2}\mathfrak{l}(x)^{2}+\lambda\mu(x)\mathfrak{l}(x)+\frac{1}{4}\mu(x)^{2}.
$$
Comparing the above two identities we get
$$
\mathfrak{m}(x^{2})-\mathfrak{m}(x)^{2}\in \mathcal{Z(G^\prime)}
\eqno(4.4)
$$
for all $x\in \mathcal{G}$. Linearizing $(4.4)$ we obtain
$$
\mathfrak{m}(x\circ y)-\mathfrak{m}(x)\circ \mathfrak{m}(y)\in
\mathcal{Z(G^\prime)}
$$
for all $x, y\in \mathcal{G}$. Define the mapping $\varepsilon:
\mathcal{G}\times \mathcal{G}\longrightarrow \mathcal{Z(G^\prime)}$ by
$$
\varepsilon(x,y)=\mathfrak{m}(x\circ y)-\mathfrak{m}(x)\circ
\mathfrak{m}(y). \eqno(4.5)
$$
Clearly, $\varepsilon$ is a symmetric bilinear mapping. Of course,
$\mathfrak{m}$ is a Jordan homomorphism if and only if
$\varepsilon(x, y)=0$ for all $x, y\in \mathcal{G}$. For any $x,y\in
\mathcal{G}$, let us put $W=\mathfrak{m}(x\circ (x\circ y))$. By
$(4.5)$ we have
$$
\begin{aligned}
W&=\mathfrak{m}(x)\mathfrak{m}(x\circ y)+\mathfrak{m}(x\circ y)\mathfrak{m}(x)+\varepsilon(x,x\circ y) \\
&=\mathfrak{m}(x)\{\mathfrak{m}(x)\circ\mathfrak{m}(y)+\varepsilon(x,y)\}
+[\mathfrak{m}(x)\circ\mathfrak{m}(y)+\varepsilon(x,y)]\mathfrak{m}(x)+\varepsilon(x,x\circ y) \\
&=\mathfrak{m}(x)^{2}\mathfrak{m}(y)+2\mathfrak{m}(x)\mathfrak{m}(y)\mathfrak{m}(x)
+\mathfrak{m}(y)\mathfrak{m}(x)^{2}+2\varepsilon(x,y)\mathfrak{m}(x)+\varepsilon(x,x\circ
y).
\end{aligned}
$$
On the other hand
$$
\begin{aligned}
W&=2\mathfrak{m}(xyx)+\mathfrak{m}(x^{2}\circ y) \\
&=2\mathfrak{m}(xyx)+\mathfrak{m}(x^{2})\circ \mathfrak{m}(y)+\varepsilon(x^{2},y) \\
&=2\mathfrak{m}(xyx)+[\mathfrak{m}(x^{2})+\frac{1}{2}\varepsilon(x,x)]\mathfrak{m}(y) \\
&\quad +\mathfrak{m}(y)[\mathfrak{m}(x^{2})+\frac{1}{2}\varepsilon(x,x)]+\varepsilon(x^{2},y) \\
&=2\mathfrak{m}(xyx)+\mathfrak{m}(x)^{2}\mathfrak{m}(y)+\mathfrak{m}(y)\mathfrak{m}(x)^{2} \\
&\quad +\varepsilon(x,x)\mathfrak{m}(y)+\varepsilon(x^{2},y).
\end{aligned}
$$
Comparing the above two relations gives
$$
\begin{aligned}
\mathfrak{m}(xyx)
&=\mathfrak{m}(x)\mathfrak{m}(y)\mathfrak{m}(x)+\varepsilon(x,y)\mathfrak{m}(x)
-\frac{1}{2}\varepsilon(x,x)\mathfrak{m}(y) \\
&\quad +\frac{1}{2}\varepsilon(x,x\circ
y)\mathfrak{m}(y)-\frac{1}{2}\varepsilon(x^{2},y).
\end{aligned}\eqno(4.6)
$$
By completing linearization of $(4.6)$ we obtain
$$
\begin{aligned}
\mathfrak{m}(xyz+zyx)
&=\mathfrak{m}(x)\mathfrak{m}(y)\mathfrak{m}(z)
+\mathfrak{m}(z)\mathfrak{m}(y)\mathfrak{m}(x)+\varepsilon(x,y)\mathfrak{m}(z) \\
&\quad +\varepsilon(z,y)\mathfrak{m}(x)-\varepsilon(x,z)\mathfrak{m}(y)
+\frac{1}{2}\varepsilon(x,z\circ y) \\
&\quad +\frac{1}{2}\varepsilon(z,x\circ
y)-\frac{1}{2}\varepsilon(x\circ z,y).
\end{aligned}\eqno(4.7)
$$
Let us consider $U=\mathfrak{m}(xyx^{2}+x^{2}yx)$. By $(4.7)$ we
know that
$$
\begin{aligned}
U&=\mathfrak{m}(x)\mathfrak{m}(y)\mathfrak{m}(x^{2})
+\mathfrak{m}(x^{2})\mathfrak{m}(y)\mathfrak{m}(x)+\varepsilon(x,y)\mathfrak{m}(x^{2}) \\
&\quad +\varepsilon(x^{2},y)\mathfrak{m}(x)-\varepsilon(x,x^{2})\mathfrak{m}(y)
+\frac{1}{2}\varepsilon(x,x^{2}\circ y) \\
&\quad +\frac{1}{2}\varepsilon(x^{2},x\circ y)-\frac{1}{2}\varepsilon(x^{3},y).
\end{aligned}
$$
Since
$\mathfrak{m}(x^{2})=\mathfrak{m}(x)^{2}+\frac{1}{2}\varepsilon(x,x)$,
we get
$$
\begin{aligned}
U&=\mathfrak{m}(x)\mathfrak{m}(y)\mathfrak{m}(x)^{2}
+\mathfrak{m}(x)^{2}\mathfrak{m}(y)\mathfrak{m}(x)+\varepsilon(x,x)\mathfrak{m}(x)\mathfrak{m}(y) \\
&\quad +\frac{1}{2}\varepsilon(x,x)\mathfrak{m}(y)\mathfrak{m}(x)
+\varepsilon(x,y)\mathfrak{m}(x)^{2}+\varepsilon(x^{2},y)\mathfrak{m}(x) \\
&\quad -\varepsilon(x,x^{2})\mathfrak{m}(y)+\frac{1}{2}\varepsilon(x,y)\varepsilon(x,x)
+\frac{1}{2}\varepsilon(x,x^{2}\circ y) \\
&\quad +\frac{1}{2}\varepsilon(x^{2},x\circ y)-\varepsilon(x^{3},y).
\end{aligned}
$$
On the other hand, using $(4.5)$ and $(4.6)$ we have
$$
\begin{aligned}
U&=\mathfrak{m}((xyx)\circ x) \\
&=\mathfrak{m}(xyx)\circ \mathfrak{m}(x)+\varepsilon(xyx,x) \\
&=\mathfrak{m}(x)\mathfrak{m}(y)\mathfrak{m}(x)^{2}
+\mathfrak{m}(x)^{2}\mathfrak{m}(y)\mathfrak{m}(x)+2\varepsilon(x,y)\mathfrak{m}(x)^{2} \\
&\quad -\frac{1}{2}\varepsilon(x,x)(\mathfrak{m}(y)\circ
\mathfrak{m}(x))+\varepsilon(x,x\circ
y)\mathfrak{m}(x)-\varepsilon(x^{2},y)\mathfrak{m}(x)+\varepsilon(xyx,x).
\end{aligned}
$$
Comparing the above two relations yields
$$
\begin{aligned}
&\varepsilon(x,x)\mathfrak{m}(x)\circ \mathfrak{m}(y)
-\varepsilon(x,y)\mathfrak{m}(x)^{2}-\varepsilon(x,x^{2})\mathfrak{m}(y) \\
&\quad +(2\varepsilon(x^{2},y)-\varepsilon(x,x\circ
y))\mathfrak{m}(x)\in \mathcal{Z(G^\prime)}
\end{aligned}\eqno(4.8)
$$
for all $x, y\in \mathcal{G}$. In particular, if $x=y$, we obtain
$$
\varepsilon(x,x)\mathfrak{m}(x)^{2}-\varepsilon(x,x^{2})\mathfrak{m}(x)\in
\mathcal{Z(G^\prime)} \eqno(4.9)
$$
for all $x\in \mathcal{G}$. Therefore
$$
\varepsilon(x,x)[[\mathfrak{m}(x)^{2},u],[\mathfrak{m}(x),u]]=0
$$
for all $x\in \mathcal{G}, u\in \mathcal{G'}$, which can be in view
of $(4.3)$ rewritten as
$$
\lambda^{3}\varepsilon(x,x)[[\mathfrak{l}(x)^{2},u],[\mathfrak{l}(x),u]]=0.
$$
We may assume that $A'$ is noncommutative. Pick $a_1,a_2\in A'$ such
that $a_1[a_1,a_2]a_1\neq 0$ (see the proof of \cite[Lemma
2.7]{BenkovicEremita1}). Setting
$$
\mathfrak{l}(x_{0})=\left[\begin{array}{cc}
a_{1} & 0\\
& 0\end{array}\right]\quad \text{and}\quad
u_{0}=\left[\begin{array}{cc}
a_{2} & m\\
& 0\end{array}\right]
$$
for some $x_{0}\in \mathcal{G}$ and an arbitrary $m\in  M'$ in the
relation
$\lambda^{3}\varepsilon(x,x)[[\mathfrak{l}(x)^{2},u],[\mathfrak{l}(x),u]]=0$,
we arrive at
$$
\pi_{A'}(\lambda^{3}\varepsilon(x_{0},x_{0}))a_{1}[a_{1},a_{2}]a_{1}m=0
$$
for all $m\in M^\prime$. By the loyality of $M^\prime$ it follows
that
$\pi_{A'}(\lambda^{3}\varepsilon(x_{0},x_{0}))a_{1}[a_{1},a_{2}]a_{1}=0$.
Hence $\pi_{A'}(\lambda^{3}\varepsilon(x_{0},x_{0}))=0$ by Lemma
\ref{xxsec3.2}. This shows that
$\lambda^{3}\varepsilon(x_{0},x_{0})=0$. Since $\lambda\neq 0$,
$\varepsilon(x_{0},x_{0})=0$ by Lemma \ref{xxsec3.3}. Taking $\varepsilon(x_{0},x_{0})=0$
into $(4.9)$ and then making the commutator with $u_0$ we can obtain $\varepsilon(x_{0},x_{0}^{2})=0$.
From $(4.8)$ we get
$$
\varepsilon(x_{0},y)\mathfrak{m}(x_{0})^2
+[-2\varepsilon(x_0^2, y)+\varepsilon(x_{0},x_{0}\circ y)]\mathfrak{m}(x_{0})\in
\mathcal{Z(G^\prime)} \eqno(4.10)
$$
for all $y\in \mathcal{G}$. Let us commute the above relation with $u_0$ and then
with $[\mathfrak{m}(x_{0}), u_0]$ in order.
We will eventually observe that $\varepsilon(x_{0},y)=0$ for all $y\in \mathcal{G}$.
Then the equation $(4.10)$ shows that $2\varepsilon(x_0^2, y)\mathfrak{m}(x_{0})\in
\mathcal{Z(T^\prime)}$ for all $y\in \mathcal{G}$.
Therefore $\varepsilon(x_0^2, y)[\mathfrak{m}(x_{0}), u_0]=0$
and hence $\varepsilon(x_0^2, y)=0$ for all $y\in \mathcal{G}$.

Next, we assert that $\varepsilon(x,y)=0$. Substituting $x_{0}+y$ for
$x$ by in $(4.9)$ and using the fact $\varepsilon(x_{0},y)=0$, we have
$$
\begin{aligned}
&\varepsilon(y,y)\mathfrak{m}(x_{0})^{2}+\varepsilon(y,y)\mathfrak{m}(x_{0})\circ \mathfrak{m}(y)
-\varepsilon(y,(x_{0}+y)^{2})\mathfrak{m}(x_{0}) \\
&-\varepsilon(y,x_{0}\circ y)\mathfrak{m}(y)\in \mathcal{Z(G^\prime)}.
\end{aligned}
$$
On the other hand, replacing $x$ by $-x_{0}+y$ in $(4.9)$ we get
$$
\begin{aligned}
&\varepsilon(y,y)\mathfrak{m}(x_{0})^{2}-\varepsilon(y,y)\mathfrak{m}(x_{0})\circ \mathfrak{m}(y)
+\varepsilon(y,(x_{0}-y)^{2})\mathfrak{m}(x_{0}) \\
&+\varepsilon(y,x_{0}\circ y)\mathfrak{m}(y)\in \mathcal{Z(G^\prime)}.
\end{aligned}
$$
Comparing the two relations it follows that
$$
\varepsilon(y,y)\mathfrak{m}(x_{0})^{2}
-\varepsilon(y,x_{0}\circ
y)\mathfrak{m}(x_{0})\in \mathcal{Z(G^\prime)}.
$$
Commuting with $u_{0}$ and then with $[\mathfrak{m}(x_{0}),u_{0}]$,
in view of $(4.3)$ the above relation becomes
$$
\varepsilon(y,y)[[\mathfrak{l}(x_{0})^{2},u_{0}],[\mathfrak{l}(x_{0}),u_{0}]]=0.
\eqno(4.11)
$$
Furthermore, $\varepsilon(y,y)=0$ for all $y\in \mathcal{G}$.
Hence $\varepsilon=0$ by the symmetry of $\varepsilon$. This shows that $\mathfrak{m}$ is a Jordan
homomorphism.

We claim that $\lambda=\pm 1$. By $(4.3)$ it follows that
$$
\begin{aligned}
\lambda^{2}\mathfrak{m}([[x,y],z])
&=\lambda^{3}\mathfrak{l}([[x,y],z])+\frac{1}{2}\lambda^{2}\mu([[x,y],z]) \\
&=[[\mathfrak{m}(x),\mathfrak{m}(y)],\mathfrak{m}(z)]+\frac{1}{2}\lambda^{2}\mu([[x,y],z])
\end{aligned}
$$
for all $x,y,z\in \mathcal{G}$. Moreover, we get
$$
\lambda^{2}\mathfrak{m}([[x,y],z])-[[\mathfrak{m}(x),\mathfrak{m}(y)],
\mathfrak{m}(z)]\in \mathcal{Z(G^\prime)} \eqno(4.12)
$$
for all $x,y,z\in \mathcal{G}$. Considering $(4.12)$ and using the
facts $\mathfrak{m}(x\circ y)=\mathfrak{m}(x)\circ \mathfrak{m}(y)$ and
$[[x,y],z]=x\circ (y\circ z)-y\circ (x\circ z)$, we conclude that
$$
(\lambda^{2}-1)[[\mathfrak{m}(x),\mathfrak{m}(y)],\mathfrak{m}(z)]\in
\mathcal{Z(G^\prime)}.
$$
for all $x,y,z\in \mathcal{G}$. By $(4.3)$ we know that
$\lambda^{3}(\lambda^{2}-1)\mathfrak{l}([[x,y],z])\in
\mathcal{Z(G^\prime)}$. Since $x,y,z$ are arbitrary elements in
$\mathcal{G}$ and $\mathfrak{l}$ is bijective, we eventually obtain
$\lambda^{3}(\lambda^{2}-1)=0$. Since $\lambda\neq 0$, we get
$\lambda=\pm 1$.

Let us put $\mathfrak{n}(x)=-\frac{1}{2}\mu(x)$. When $\lambda=1$,
then $\mathfrak{l}=\mathfrak{m}+\mathfrak{n}$.
It is easy to verify that $\mathfrak{n}([[x, y], z])=0$ for all $x, y, z\in \mathcal{G}$.
Note that $\mathfrak{m}$ is a Jordan homomorphism from $\mathcal{G}$ into $\mathcal{G}^\prime$
and hence is a Lie triple homomorphism from $\mathcal{G}$ into $\mathcal{G}^\prime$.
When $\lambda=-1$, then $\mathfrak{n}=\mathfrak{l}+\mathfrak{m}$ is a Lie triple homomorphism from
$\mathcal{G}$ into $\mathcal{Z(G^\prime)}$.
Therefore $\mathfrak{n}([[x, y], z])=0$ for all $x, y, z\in \mathcal{G}$.

We have to prove that $\mathfrak{m}$ is one-to-one. Suppose that
$\mathfrak{m}(w)=0$ for some $w\in \mathcal{G}$. Then
$\mathfrak{l}(w)\in \mathcal{Z(G^\prime)}$ and hence $w\in
\mathcal{Z(G)}$. This implies that ${\rm ker}(\mathfrak{m})\subseteq
\mathcal{Z(G)}$. That is, ${\rm ker}(\mathfrak{m})$ is a Jordan
ideal of $\mathcal{Z(G)}$. However, by Lemma \ref{xxsec4.1} it
follows that ${\rm ker}(\mathfrak{m})=0$.

It remains to prove that $\mathfrak{m}$ is onto in case
$\mathcal{G^\prime}$ is central over $\mathcal{R}$. Let us first
show that $\mathfrak{m}(1)=1^\prime$. Since $\mathfrak{l}$ is a Lie
triple isomorphism, we have $\mathfrak{l}(1)\in
\mathcal{Z(G^\prime)}$ and hence
$\mathfrak{m}(1)=\mathfrak{l}(1)-\mathfrak{n}(1)\in \mathcal{Z(G^\prime)}$.
Note that $\mathfrak{m}$ is a Jordan homomorphism. We see that
$2\mathfrak{m}(x)=\mathfrak{m}(x\circ
1)=2\mathfrak{m}(x)\mathfrak{m}(1)$. Since $\frac{1}{2}\in
\mathcal{R}$, $(\mathfrak{m}(1)-1^{'})\mathfrak{m}(x)=0$, which can
be rewritten as $(\mathfrak{m}(1)-1^{'})\mathfrak{l}(x)\in
\mathcal{Z(G^\prime)}$. Then
$(\mathfrak{m}(1)-1')[\mathfrak{l}(x),s]=0$ for all $s\in
\mathcal{G'}$. Therefore
$(\mathfrak{m}(1)-1')[\mathcal{G'},\mathcal{G'}]=0$. Consequently,
$\pi_{A'}(\mathfrak{m}(1)-1')[A',A']=0$. This implies that
$\pi_{A'}(\mathfrak{m}(1)-1')=0$ and so $\mathfrak{m}(1)=1'$.
Obviously, we may write $\mathfrak{n}(x)=f(x)1^\prime$
for some linear mapping $f: \mathcal{G}\longrightarrow \mathcal{R}$.
Since $\mathfrak{m}$ is $\mathcal{R}$-linear, we obtain that
$\mathfrak{l}(x)=\pm\mathfrak{m}(x)+f(x)1'=\mathfrak{m}(\pm
x+f(x)1)$ for all $x\in \mathcal{G}$. Consequently $\mathfrak{m}$
is onto, since $\mathfrak{l}$ is bijective. The proof of the theorem is thus completed.
\end{proof}

\begin{theorem}\label{xxsec4.4}
Let $\mathcal{G}=\mathcal{G}(A, M, N, B)$ and  $\mathcal{G}^\prime=\mathcal{G}^\prime(A^\prime, M^\prime, N^\prime, B^\prime)$ be two generalized matrix algebras over a commutative ring $\mathcal{R}$ with $\frac{1}{2}\in \mathcal{R}$. Let $\theta\colon \mathcal{G}\longrightarrow \mathcal{G^\prime}$ be a Lie triple isomorphism. If
\begin{enumerate}
\item[(1)] each commuting linear mapping on $A^\prime$ or $B^\prime$ is proper,
\item[(2)] $\pi_{A^\prime}(\mathcal{Z(G^\prime)})={\mathcal Z}(A^\prime) \neq A^\prime$ and $\pi_{B^\prime}(\mathcal{Z(G^\prime)})={\mathcal Z}(B^\prime)\neq B^\prime$,
\item[(3)] either $A$ or $B$ is noncommutative,
\item[(4)] $M^\prime$ is loyal,
\end{enumerate}
then $\theta=\pm\mathfrak{m}+\mathfrak{n}$,
where $\mathfrak{m}:\mathcal{G}\rightarrow \mathcal{G}^{'}$ is Jordan homomorphism,
$\mathfrak{m}$ is one-to-one,and $\mathfrak{n}\colon \mathcal{G}\longrightarrow {\mathcal Z}(\mathcal{G}^{'})$
is a linear mapping sending second commutators to zero.
Moreover, if $\mathcal{G'}$ is central over $\mathcal{R}$, then $\mathfrak{m}$ is surjective.
\end{theorem}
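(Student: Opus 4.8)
The plan is to obtain Theorem \ref{xxsec4.4} as an immediate combination of Theorem \ref{xxsec3.4} with Proposition \ref{xxsec4.3}, since the genuine analytic content (the Jordan-homomorphism computation, the determination of $\lambda=\pm 1$, injectivity and surjectivity of $\mathfrak{m}$) has already been carried out in the proof of Proposition \ref{xxsec4.3} under the single abstract hypothesis that centralizing traces on $\mathcal{G}'$ are proper. Thus the entire task reduces to supplying that hypothesis and matching up the remaining conditions. First I would note that the assumption $\frac{1}{2}\in\mathcal{R}$ forces $\mathcal{G}'$ to be $2$-torsionfree, and that conditions (1), (2), (4) of Theorem \ref{xxsec4.4}, read for the algebra $\mathcal{G}'$, are exactly the three numbered hypotheses of Theorem \ref{xxsec3.4}. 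Applying Theorem \ref{xxsec3.4} to $\mathcal{G}'$ therefore yields at once that every centralizing trace of an arbitrary bilinear mapping on $\mathcal{G}'$ is proper.

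Next I would verify that the hypotheses of Proposition \ref{xxsec4.3} are all in force. Hypothesis (1) of that proposition is precisely the properness statement just established. For hypothesis (2), condition (3) of Theorem \ref{xxsec4.4} directly gives that at least one of $A,B$ is noncommutative; and condition (2) of Theorem \ref{xxsec4.4}, which asserts $\mathcal{Z}(A')\neq A'$ and $\mathcal{Z}(B')\neq B'$, forces both $A'$ and $B'$ to be noncommutative (an algebra equals its own centre exactly when it is commutative), so in particular at least one of $A',B'$ is noncommutative, as required. Hypothesis (3) of Proposition \ref{xxsec4.3} is the loyalty of $M'$, which is condition (4) of Theorem \ref{xxsec4.4}, and the standing requirement $\frac{1}{2}\in\mathcal{R}$ is shared by both statements.

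Having checked every hypothesis, I would invoke Proposition \ref{xxsec4.3} for the Lie triple isomorphism $\theta$ (in the role played there by $\mathfrak{l}$). This produces the decomposition $\theta=\pm\mathfrak{m}+\mathfrak{n}$ with $\mathfrak{m}$ an injective Jordan homomorphism and $\mathfrak{n}\colon\mathcal{G}\longrightarrow\mathcal{Z(G')}$ a linear map annihilating all second commutators, together with the surjectivity of $\mathfrak{m}$ when $\mathcal{G}'$ is central over $\mathcal{R}$, which is exactly the assertion of Theorem \ref{xxsec4.4}. The only point that demands any attention is the bookkeeping of hypotheses across the two cited results, in particular the observation that the target-side noncommutativity required by Proposition \ref{xxsec4.3} is not an extra assumption but is already implied by the centre conditions in (2); since no new identities or estimates are needed, there is no substantive obstacle, and the proof is essentially a verification that the two theorems dovetail.
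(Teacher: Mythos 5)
Your proposal is correct and is exactly the paper's own proof: the paper disposes of Theorem \ref{xxsec4.4} with the single line ``It follows from Theorem \ref{xxsec3.4} and Proposition \ref{xxsec4.3},'' and your argument simply makes explicit the hypothesis bookkeeping (that $\frac{1}{2}\in\mathcal{R}$ gives $2$-torsionfreeness, and that $\mathcal{Z}(A^\prime)\neq A^\prime$, $\mathcal{Z}(B^\prime)\neq B^\prime$ supply the target-side noncommutativity needed in Proposition \ref{xxsec4.3}) that the paper leaves implicit.
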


\begin{proof}
It follows from Theorem $3.4$ and Proposition$4.3$
\end{proof}

\section{Topic for Future Research}\label{xxsec5}

Although the main aim of this paper is to describe 
the form of Lie triple isomorphisms on generalized matrix algebras, 
the investgation of various additive mappings (associative-type, Jordan-type or Lie-type) on generalized matrix algebras also have a great interest and should be further paid much attention. The study of additive mappings on generalized matrix algebras is shedding light on the investigation of functional identities in the setting of such kind of algebras.
In the light of the motivation and contents of this article, we
will propose several topics with high potential and with merit for
future research in this field. 

The theory of functional identitieswas initiated by Bre\v{s}ar at the beginning of 90's in last century and it was greatly
developed by Beidar, Bre\v{s}ar, Chebotar, and Martindale. We refer the reader to the monograph \cite{BresarChebotarMartindale} for a full account on the theory of functional identities. Let $\mathcal{A}$ be an associative ring. Let $F_1, F_2, G_1, G_2$ be mappings from $\mathcal{A}$ into itself such that
$$
F_1(x)y+F_2(y)x+xG_2(y)+yG_1(x)=0 \eqno(5.1)
$$
for all $x, y\in \mathcal{A}$. This is a basic functional identity, which was one of the first functional
identities studied in prime algebras. The mappings $F_1, F_2, G_1$ and $G_2$ are looked on as unknowns
and the main prupose is to describe the form of these mappings. Functional identity $(5.1)$ is also
closely related to commuting mappings, which is due to the fact each commuting additive mapping $F$ of $\mathcal{A}$ satisfies the relation
$$
F(x)y+F(y)x-xF(y)-yF(x) = 0
$$
for all $x, y\in \mathcal{A}$. This relation is just one special case of $(5.1)$. Centralizing mappings and commuting mappings can be considered as the most basic and important examples of functional identities. However, the general theory of functional identities, which
was developed in \cite{BresarChebotarMartindale}, cannot be applied in the context of triangular rings, since these rings are not $d$-free. Nevertheless, Beidar, Bre\v{s}ar and Chebotar investigated certain functional
identities on upper triangular matrix algebras \cite{BeidarBresarChebotar}. Moreover, Cheung described
the form of commuting linear maps for a certain class of triangular algebras \cite{Cheung2}. Later, several
problems on certain types of mappings on triangular rings and algebras have been studied,
where some special examples of functional identities appear. Zhang and his students \cite{ZhangFengLiWu} studied the functional identity $(5.1)$ in the context of nest algebras. In two recent articles \cite{Eremita1, Eremita2}, Eremita considered functional identity $(5.1)$ in
triangular algebras. He managed to describe the form of additive mappings $F_1, F_2, G_1, G_2\colon \mathcal{T}\longrightarrow \mathcal{T}$
satisfying $(5.1)$ if a triangular ring $\mathcal{T}$ satisfies certain conditions. Moreover, the notion of the
maximal left ring of quotients, which plays an important role in the study of functional identities on (semi-)prime rings, is used to characterize those additive mappings $F_1, F_2, G_1, G_2$ \cite{Eremita2}. It is predictable  that Eremita's approach, which is based on the notion of the maximal left ring
of quotients, enable us to generalize and unify a number of known results regarding mappings of triangular algebras and generalized matrix algebras.

The functional identities in triangular algebras and full matrix algebras were
already studied in \cite{BeidarBresarChebotar, BresarSpenko, Eremita1, Eremita2}. 
One would expect that the next step is to investigate
functional identities of generalized matrix algebras. The notion of generalized matrix algebras
efficiently unifies triangular algebras and full matrix algebras
together. The eventual goal of our systematic work is to deal
with all questions related to additive (or multiplicative) mappings of triangular
algebras and full matrix algebras under a unified frame, which is
the desirable generalized matrix algebras frame.

\begin{question}\label{xxsec5.1}
Let $\mathcal{G}=\mathcal{G}(A, M, N, B)$ be a generalized matrix algebra over a commutative ring $\mathcal{R}$ and let $F_1, F_2, G_1, G_2$ be mappings from $\mathcal{G}$ into itself such that 
$$
F_1(x)y+F_2(y)x+xG_2(y)+yG_1(x)=0
$$
for all $x,y\in \mathcal{G}$. Describe the forms of $F_1, F_2, G_1, G_2$ satisfying the above condition.
\end{question}

Although people embark on studying functional identities in triangular algebras and full matrix algebras, the functional identities with additional structure has not been treated yet. For instance, the functional identities with automorphisms and derivations in generalized matrix algebras are worthy to be considered further. 

\begin{proposition}{\rm(\cite[Proposition 4.2]{LiWei1})}\label{xxsec5.2}
Let $\mathcal{G}=\mathcal{G}(A, M, N, B)$ be a generalized matrix algebra over a commutative ring $\mathcal{R}$. An $\mathcal{R}$-linear mapping $\Theta$ is a derivation of $\mathcal{G}$ if and only if
$\Theta$ has the form
$$ 
\Theta\left(\left[
\begin{array}
[c]{cc}%
a & m\\
n & b\\
\end{array}
\right]\right) 
=\left[
\begin{array}
[c]{cc}%
\delta_1(a)-mn_0-m_0n & am_0-m_0b+\tau_2(m)\\
n_0a-bn_0+\nu_3(n) & n_0m+nm_0+\mu_4(b)\\
\end{array}
\right]
$$
where $m_0\in M, n_0\in N$ and
$$
\begin{aligned} \delta_1:& A \longrightarrow A, &
 \tau_2: & M\longrightarrow M, & \tau_3: & N\longrightarrow M,\\
\nu_2: & M\longrightarrow N, & \nu_3: & N\longrightarrow N , &
\mu_4: & B\longrightarrow B
\end{aligned}
$$
are all $\mathcal{R}$-linear mappings satisfying the following
conditions:
\begin{enumerate}
\item[{\rm(1)}] $\delta_1$ is a derivation of $A$ with
$\delta_1(mn)=\tau_2(m)n+m\nu_3(n);$

\item[{\rm(2)}] $\mu_4$ is a derivation of $B$ with
$\mu_4(nm)=n\tau_2(m)+\nu_3(n)m;$

\item[{\rm(3)}] $\tau_2(am)=a\tau_{2}(m)+\delta_1(a)m$ and
$\tau_2(mb)=\tau_2(m)b+m\mu_4(b);$

\item[{\rm(4)}] $\nu_3(na)=\nu_3(n)a+n\delta_1(a)$ and
$\nu_3(bn)=b\nu_3(n)+\mu_4(b)n.$
\end{enumerate}
\end{proposition}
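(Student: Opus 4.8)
The plan is to exploit the Peirce decomposition of $\mathcal{G}$ induced by the two orthogonal idempotents $e_{1}=\left[\begin{smallmatrix} 1 & 0 \\ 0 & 0\end{smallmatrix}\right]$ and $e_{2}=\left[\begin{smallmatrix} 0 & 0 \\ 0 & 1'\end{smallmatrix}\right]$, which satisfy $e_{1}+e_{2}=1_{\mathcal{G}}$ and for which $e_{i}\mathcal{G}e_{j}$ recovers $A,M,N,B$ as $(i,j)$ runs over $(1,1),(1,2),(2,1),(2,2)$. I identify $a,m,n,b$ with the elements $\hat a=e_{1}\hat a e_{1}$, $\hat m=e_{1}\hat m e_{2}$, $\hat n=e_{2}\hat n e_{1}$, $\hat b=e_{2}\hat b e_{2}$ of $\mathcal{G}$, so that an arbitrary element is $\hat a+\hat m+\hat n+\hat b$ and, by $\mathcal{R}$-linearity of $\Theta$, it suffices to determine $\Theta$ on each of these four pieces and then to extract the multiplicative constraints.

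First I would pin down $\Theta(e_{1})$. Differentiating $e_{1}^{2}=e_{1}$ gives $\Theta(e_{1})=\Theta(e_{1})e_{1}+e_{1}\Theta(e_{1})$; writing $\Theta(e_{1})$ as a block matrix and comparing entries forces both diagonal blocks to vanish, so $\Theta(e_{1})=\left[\begin{smallmatrix} 0 & m_{0} \\ n_{0} & 0\end{smallmatrix}\right]$ for uniquely determined $m_{0}\in M$, $n_{0}\in N$, and then $\Theta(e_{2})=-\Theta(e_{1})$ since $e_{1}+e_{2}$ is the identity.

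Next I would run the same calculation on each Peirce piece using the absorption identities. For $a\in A$ I apply $\Theta$ to both $e_{1}\hat a$ and $\hat a e_{1}$; comparing blocks yields $\Theta(\hat a)=\left[\begin{smallmatrix} \delta_{1}(a) & am_{0} \\ n_{0}a & 0\end{smallmatrix}\right]$, where $\delta_{1}(a)$ is defined to be the $(1,1)$-block and is $\mathcal{R}$-linear because $\Theta$ is. The analogous computations starting from $e_{2}\hat b=\hat b e_{2}$, $e_{1}\hat m=\hat m e_{2}$, and $e_{2}\hat n=\hat n e_{1}$ produce
\[
\Theta(\hat b)=\left[\begin{smallmatrix} 0 & -m_{0}b \\ -bn_{0} & \mu_{4}(b)\end{smallmatrix}\right],\quad
\Theta(\hat m)=\left[\begin{smallmatrix} -mn_{0} & \tau_{2}(m) \\ 0 & n_{0}m\end{smallmatrix}\right],\quad
\Theta(\hat n)=\left[\begin{smallmatrix} -m_{0}n & 0 \\ \nu_{3}(n) & nm_{0}\end{smallmatrix}\right],
\]
which simultaneously defines the $\mathcal{R}$-linear maps $\mu_{4},\tau_{2},\nu_{3}$. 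In particular the off-diagonal pieces carry no cross components, so the maps $\tau_{3}\colon N\to M$ and $\nu_{2}\colon M\to N$ are forced to vanish. Summing these four expressions by linearity reproduces exactly the asserted block form of $\Theta$.

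Finally I would recover the compatibility conditions (1)--(4) by differentiating the eight ``multiplication tables'' of the Peirce pieces and reading off one block of each resulting identity: $\Theta(\hat a\hat a')$ shows $\delta_{1}$ is a derivation of $A$, $\Theta(\hat b\hat b')$ shows $\mu_{4}$ is a derivation of $B$, while $\Theta(\hat a\hat m),\Theta(\hat m\hat b)$ yield (3) and $\Theta(\hat n\hat a),\Theta(\hat b\hat n)$ yield (4); the two mixed products $\hat m\hat n=\widehat{\Phi_{MN}(m\otimes n)}$ and $\hat n\hat m=\widehat{\Psi_{NM}(n\otimes m)}$ give the pairing identities $\delta_{1}(mn)=\tau_{2}(m)n+m\nu_{3}(n)$ and $\mu_{4}(nm)=n\tau_{2}(m)+\nu_{3}(n)m$. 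For the converse I would substitute the block form into $\Theta(XY)-\Theta(X)Y-X\Theta(Y)$ for arbitrary $X,Y$ and check that it vanishes blockwise; the terms containing $m_{0},n_{0}$ cancel on their own because they constitute the inner derivation $\mathrm{ad}_{X_{0}}$ with $X_{0}=\left[\begin{smallmatrix} 0 & -m_{0} \\ n_{0} & 0\end{smallmatrix}\right]$, leaving precisely the requirement that $\delta_{1},\tau_{2},\nu_{3},\mu_{4}$ obey (1)--(4). The only genuine hazard is bookkeeping: keeping the four module actions and the two Morita pairings straight across all eight products. There is no conceptual obstacle, since every structural step is a forced block comparison once $\Theta(e_{1})$ has been computed.
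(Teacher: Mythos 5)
Your proof is correct. A caveat about the comparison: this paper offers no proof of the proposition at all --- it is quoted (with citation to \cite{LiWei1}) in the final section purely to motivate Question \ref{xxsec5.3} --- so the only meaningful check is against the cited source, and your argument is essentially the standard one used for such structure results. The skeleton is exactly right: differentiating $e_1^2=e_1$ kills both diagonal blocks of $\Theta(e_1)$; the absorption identities $\hat a=e_1\hat a=\hat a e_1$, $\hat m=e_1\hat m=\hat m e_2$, etc., force the stated block form on each Peirce corner (in particular the cross maps $\tau_3,\nu_2$ vanish, which is why they play no role in the displayed formula); differentiating the eight nonzero corner products $aa'$, $bb'$, $am$, $mb$, $na$, $bn$, $mn$, $nm$ yields precisely conditions (1)--(4); and the converse reduces, after splitting off the inner derivation ${\rm ad}_{X_0}$ with $X_0=\left[\begin{smallmatrix} 0 & -m_0\\ n_0 & 0\end{smallmatrix}\right]$, to the blockwise check that the diagonal map $(a,m,n,b)\mapsto(\delta_1(a),\tau_2(m),\nu_3(n),\mu_4(b))$ satisfies the Leibniz rule, which is equivalent to (1)--(4).

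Two small points you should make explicit in a final write-up. First, $\Theta(e_2)=-\Theta(e_1)$ uses $\Theta(1)=0$, which needs the one-line argument $\Theta(1)=\Theta(1\cdot 1)=2\Theta(1)$. Second, in the forward direction the corner products that vanish identically (e.g.\ $\hat a\hat n=0$, $\hat m\hat m'=0$, $\hat a\hat b=0$) impose no further constraints, because the $m_0,n_0$ contributions cancel in pairs there --- this deserves a sentence, since it is what guarantees that conditions (1)--(4) exhaust the content of the derivation law and that no hidden compatibility condition on $m_0,n_0$ is being suppressed.
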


In view of Proposition \ref{xxsec5.2} we have

\begin{question}\label{xxsec5.3}
Let $\mathcal{G}=\mathcal{G}(A, M, N, B)$ be a generalized matrix algebra over a commutative ring $\mathcal{R}$. Let $F_1, F_2, G_1, G_2$ be mappings from $\mathcal{G}$ into itself and $\Theta_1, \Theta_2, \Delta_1, \Delta_2$ be derivations from $\mathcal{G}$ into itself such that  
$$
F_1(x)\Theta_2(y)+F_2(y)\Theta_1(x)+\Delta_1(x)G_2(y)+\Delta_2(y)G_1(x)=0
$$
for all $x,y\in \mathcal{G}$. What can we say about the forms of $F_1, F_2, G_1, G_2$ and those of $\Theta_1, \Theta_2, \Delta_1, \Delta_2$ satisfying the above condition ?
\end{question}

Let us next consider the functional identities with automorphisms in generalized matrix algebras. 

\begin{proposition}{\rm(\cite[Theorem 3.6]{BobocDascalescuWyk})}\label{xxsec5.4}
Let $\mathcal{G}=\mathcal{G}(A, M, N, B)$ and $\mathcal{G}'=\mathcal{G}'(A', M',$ $ N', B')$ be two generalized matrix algebra over a commutative ring $\mathcal{R}$. Suppose that $A'$ and $B'$ have only trivial idempotents, and at least one of $M'$ and $N'$ is nonzero. Let $\Omega\colon \mathcal{G}\longrightarrow \mathcal{G}'$ be an $\mathcal{R}$-linear mapping. Then $\Omega$ is an isomorphism if and only if $\Omega$ has one of the following forms:
\begin{enumerate}
\item[(1)] $
\Omega \left(\left[ \begin{array}{cc} a & m\\
n & b \end{array}\right]\right) = \left[\begin{array}{cc}\gamma
(a) & \gamma (a) m^\prime_0 -m^\prime _0 \delta (b) + \mu(m) \\ 
n^\prime_0 \gamma (a)
- \delta (b) n^\prime_0 + \nu(n) & \delta (b) \end{array}\right]
$, 
\end{enumerate}
where $\gamma\colon A\longrightarrow A^\prime$ and $\delta\colon B\longrightarrow B^\prime$ are two algebraic isomorphisms, 
$\mu\colon M\longrightarrow M^\prime$ is a $(\gamma, \delta)$-bimodule isomorphism, \ $\nu\colon N\longrightarrow 
N^\prime$ is a $(\delta, \gamma)$-bimodule isomorphism, $m^\prime_0 \in
M'$ and $n'_0 \in N'$ are two fixed elements.
\begin{enumerate} 
\item[(2)] $\Omega \left(\left[ \begin{array}{cc} a & m \\
n & b \end{array}\right]\right) = \left[ \begin{array}{cc}\sigma
(b) & m'_\ast \rho(a) - \sigma (b) m'_\ast + \tau (n) \\
\rho(a)n'_\ast - n'_\ast \sigma (b) + \zeta (m) & \rho(a)
\end{array} \right]$,
\end{enumerate}
where $\rho\colon A
\longrightarrow B'$ and $\sigma\colon B \longrightarrow A'$ are two algebraic isomorphisms, $\zeta\colon M\longrightarrow N'$ is a $(\rho, \sigma)$-bimodule isomorphism, $\tau\colon N\longrightarrow M'$ is a $(\sigma, \rho)$-bimodule isomorphism, $m'_\ast \in M'$ and $n'_\ast \in N'$ are two fixed elements.
\end{proposition}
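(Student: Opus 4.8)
The plan is to recognize that this statement is not an independent theorem but a clean corollary obtained by feeding the output of Theorem \ref{xxsec3.4} into Proposition \ref{xxsec4.3}. Proposition \ref{xxsec4.3} already delivers the decomposition $\theta=\pm\mathfrak{m}+\mathfrak{n}$ with all the asserted properties, but it assumes as its hypothesis (1) that every centralizing trace of an arbitrary bilinear mapping on $\mathcal{G}^\prime$ is proper. My task therefore reduces to manufacturing exactly this hypothesis from the data of the present statement, and then checking that the remaining hypotheses of Proposition \ref{xxsec4.3} are met when we set $\mathfrak{l}=\theta$.

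First I would apply Theorem \ref{xxsec3.4} to the \emph{target} algebra $\mathcal{G}^\prime$. Since $\frac{1}{2}\in\mathcal{R}$, multiplication by $2$ is invertible on $\mathcal{G}^\prime$, so $\mathcal{G}^\prime$ is $2$-torsionfree as required. The three structural hypotheses of Theorem \ref{xxsec3.4} for $\mathcal{G}^\prime$ are supplied verbatim by the hypotheses here: condition (1) (commuting linear maps on $A^\prime$ or $B^\prime$ are proper) is hypothesis (1) of Theorem \ref{xxsec3.4}; condition (2) (the center equalities $\pi_{A^\prime}(\mathcal{Z(G^\prime)})=\mathcal{Z}(A^\prime)\neq A^\prime$ and $\pi_{B^\prime}(\mathcal{Z(G^\prime)})=\mathcal{Z}(B^\prime)\neq B^\prime$) is hypothesis (2); and condition (4) ($M^\prime$ loyal) is hypothesis (3). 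Consequently Theorem \ref{xxsec3.4} yields that every centralizing trace of an arbitrary bilinear mapping on $\mathcal{G}^\prime$ is proper, which is precisely hypothesis (1) of Proposition \ref{xxsec4.3}.

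Next I would verify the two remaining hypotheses of Proposition \ref{xxsec4.3}. Its hypothesis (3) ($M^\prime$ loyal) is again condition (4) here. For its hypothesis (2) one needs at least one of $A,B$ and at least one of $A^\prime,B^\prime$ to be noncommutative: the source side is condition (3) here, while the target side is a free consequence of condition (2), since the inequality $\mathcal{Z}(A^\prime)\neq A^\prime$ says exactly that $A^\prime$ is noncommutative (and likewise $\mathcal{Z}(B^\prime)\neq B^\prime$ forces $B^\prime$ noncommutative). With all hypotheses of Proposition \ref{xxsec4.3} in force, applying it to $\mathfrak{l}=\theta$ produces the decomposition $\theta=\pm\mathfrak{m}+\mathfrak{n}$ in which $\mathfrak{m}$ is a one-to-one Jordan homomorphism and $\mathfrak{n}$ is a linear map into $\mathcal{Z}(\mathcal{G}^\prime)$ vanishing on all second commutators; the final clause (surjectivity of $\mathfrak{m}$ when $\mathcal{G}^\prime$ is central over $\mathcal{R}$) is inherited directly from Proposition \ref{xxsec4.3}.

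Because the entire argument is a matter of hypothesis matching, there is no genuine computational obstacle; the analytic substance has already been absorbed into Theorem \ref{xxsec3.4} and Proposition \ref{xxsec4.3}. The one point deserving explicit care is the translation between the two hypothesis lists, in particular the observation that the center conditions in (2) are simultaneously the exact form in which Theorem \ref{xxsec3.4} consumes them and strong enough to guarantee the target-side noncommutativity required by Proposition \ref{xxsec4.3}. Making sure no hypothesis of either ingredient is left unchecked is essentially the whole of the work.
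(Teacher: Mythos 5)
There is a fundamental mismatch: you have written a proof of Theorem \ref{xxsec4.4}, not of the statement at hand. The statement in question is Proposition \ref{xxsec5.4}, which is an external result quoted from Boboc, D\v{a}sc\v{a}lescu and van Wyk {\rm(\cite[Theorem 3.6]{BobocDascalescuWyk})} as motivation for Question \ref{xxsec5.5}; the paper offers no proof of it, and its content is entirely different from what you argue. It classifies the $\mathcal{R}$-algebra \emph{isomorphisms} $\Omega\colon \mathcal{G}\longrightarrow \mathcal{G}'$ under the hypotheses that $A'$ and $B'$ have only trivial idempotents and that at least one of $M'$, $N'$ is nonzero, and its conclusion is an \emph{exact} description of $\Omega$ by one of two explicit formulas (a diagonal-preserving form built from isomorphisms $\gamma, \delta$ and bimodule isomorphisms $\mu, \nu$ corrected by fixed elements $m'_0, n'_0$, or a diagonal-swapping form with $\rho, \sigma, \zeta, \tau$). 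Your proposal instead matches the hypotheses of Theorem \ref{xxsec3.4} against those of Proposition \ref{xxsec4.3} to extract the decomposition $\theta=\pm\mathfrak{m}+\mathfrak{n}$ for a \emph{Lie triple} isomorphism. None of the hypotheses you invoke --- properness of commuting linear maps on $A'$ or $B'$, the center conditions $\pi_{A'}(\mathcal{Z(G')})=\mathcal{Z}(A')\neq A'$, noncommutativity, loyalty of $M'$, or $\frac{1}{2}\in\mathcal{R}$ --- appears in Proposition \ref{xxsec5.4}, and conversely your argument never touches the idempotent hypotheses that do appear there.

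The gap is not repairable by hypothesis bookkeeping, because the two results live at different levels of precision: a decomposition $\pm\mathfrak{m}+\mathfrak{n}$ modulo a central map is much weaker than the explicit formulas (1) and (2), and the centralizing-trace machinery cannot produce them. A correct proof runs along the following standard lines: $\Omega$ maps the idempotent $e=1\oplus 0\in\mathcal{G}$ to an idempotent $\Omega(e)\in\mathcal{G}'$ whose diagonal Peirce components are idempotents of $A'$ and $B'$; since these algebras have only trivial idempotents, one is forced (using that $M'$ or $N'$ is nonzero to exclude degenerate cases) into two cases according to whether $\Omega(e)$ has diagonal part $1'\oplus 0$ or $0\oplus 1'$. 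Composing with the inner automorphism given by conjugation by an element of the form $\left[\begin{smallmatrix} 1' & m'_0\\ n'_0 & 1'\end{smallmatrix}\right]$ reduces $\Omega$ to an isomorphism preserving (or swapping) the diagonal corners, which then splits into the corner isomorphisms $\gamma,\delta$ (resp.\ $\rho,\sigma$) and the bimodule isomorphisms $\mu,\nu$ (resp.\ $\zeta,\tau$); unwinding the conjugation yields exactly the correction terms $\gamma(a)m'_0-m'_0\delta(b)$ and $n'_0\gamma(a)-\delta(b)n'_0$ in form (1), and similarly for form (2). The converse direction is a direct verification that each formula defines an isomorphism. You should consult \cite[Theorem 3.6]{BobocDascalescuWyk} for the details rather than route the claim through Section \ref{xxsec3} and Section \ref{xxsec4} of this paper.
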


Basing on Proposition \ref{xxsec5.4} we ask

\begin{question}\label{xxsec5.5}
Let $\mathcal{G}=\mathcal{G}(A, M, N, B)$ be a generalized matrix algebra over a commutative ring $\mathcal{R}$. Suppose that $A$ and $B$ have only trivial idempotents. Let $F_1, F_2, G_1, G_2$ be mappings from $\mathcal{G}$ into itself and $\Omega_1, \Omega_2, \Gamma_1, \Gamma_2$ be automorphisms from $\mathcal{G}$ into itself such that  
$$
F_1(x)\Omega_2(y)+F_2(y)\Omega_1(x)+\Gamma_1(x)G_2(y)+\Gamma_2(y)G_1(x)=0
$$
for all $x,y\in \mathcal{G}$. How about are the forms of $F_1, F_2, G_1, G_2$ and those of $\Omega_1, \Omega_2, \Gamma_1, \Gamma_2$ satisfying the above condition ?
\end{question}

In addition, some researchers extend the result about Lie isomorphisms between nest algebras on Hilbert spaces by Marcoux and
Sourour \cite{MarcouxSourour2} to the Banach space case, see \cite{QiHou2, QiHouDeng, WangLu}. We would liek to mention one recent remarkable result obtained by Qi, Hou and Deng. 

\begin{theorem}{\rm(\cite[Theorem 1.1]{QiHouDeng})}\label{xxsec5.6}
Let $\mathcal{N}$ and $\mathcal{M}$ be nests on Banach spaces $X$ and $Y$ over the (real or complex)
field $\mathbb{F}$ and let ${\rm Alg}\mathcal{N}$ and ${\rm Alg}\mathcal{M}$ be the associated nest algebras, respectively. Then a mapping $\Phi\colon  {\rm Alg}\mathcal{N}\longrightarrow {\rm Alg}\mathcal{M}$ is a Lie ring isomorphism, that is, $\Phi$ is additive, bijective and satisfies $\Phi([A, B])=[\Phi(A), \Phi(B)]$ for all $A, B\in {\rm Alg}\mathcal{N}$, if and only if $\Phi$ has the form $\Phi(A)=\Psi(A)+h(A)I$ for all $A\in {\rm Alg}\mathcal{N}$, where $\Psi$ is a ring isomorphism or the negative of a ring anti-isomorphism between the nest algebras and $h$ is an additive functional satisfying $h([A, B])=0$ for all $A, B\in {\rm Alg}\mathcal{N}$.
\end{theorem}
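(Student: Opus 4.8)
The plan is to prove sufficiency by direct computation and to establish necessity by reducing the problem to the centralizing-trace machinery of Section \ref{xxsec3}, using the fact that a nontrivial nest realizes $\mathrm{Alg}\mathcal{N}$ as a generalized matrix algebra. For sufficiency, suppose $\Phi(A)=\Psi(A)+h(A)I$ with $h$ additive and killing commutators. Since $I$ is central,
$$
[\Phi(A),\Phi(B)]=[\Psi(A),\Psi(B)]=\Psi([A,B])=\Phi([A,B]),
$$
where the middle equality holds both when $\Psi$ is a ring isomorphism and when $\Psi=-\Psi_0$ for a ring anti-isomorphism $\Psi_0$, because in the latter case the two sign reversals cancel. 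Additivity and bijectivity of $\Phi$ are inherited from those of $\Psi$ and $h$.

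For necessity I would first separate the trivial-nest case $\mathcal{N}=\{0,X\}$, where $\mathrm{Alg}\mathcal{N}=\mathcal{B}(X)$ is primitive and the statement reduces to the classical description of Lie isomorphisms of the full operator algebra. When $\mathcal{N}$ is nontrivial, choose a nontrivial element of $\mathcal{N}$ affording an idempotent $P$; the Peirce decomposition of $\mathrm{Alg}\mathcal{N}$ relative to $P$ exhibits it as a generalized matrix algebra whose off-diagonal bimodule is loyal. Iterating the commutator identity shows that $\Phi$ is in particular a Lie triple isomorphism, so $[\Phi(\Phi^{-1}(y)^2),y]$ is central for every $y$, and hence $y\mapsto\Phi(\Phi^{-1}(y)^2)$ is a centralizing trace of a biadditive map. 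Theorem \ref{xxsec3.4} then yields
$$
\Phi(A^2)=\lambda\Phi(A)^2+\mu(A)\Phi(A)+\nu(A)
$$
with $\lambda$ central, $\mu$ additive into the center, and $\nu$ a central trace. From here I would mimic Proposition \ref{xxsec4.3}: putting $\Psi(A)=\lambda\Phi(A)+\frac{1}{2}\mu(A)$ and running the Jordan-homomorphism computation forces $\lambda=\pm 1$ and makes $\Psi$ an injective Jordan homomorphism onto $\mathrm{Alg}\mathcal{M}$. A further argument using the nest (equivalently, the primeness of the corner algebras together with the loyalty of the bimodule) splits $\Psi$ into a genuine ring isomorphism or the negative of an anti-isomorphism, and defining $h$ by $h(A)I=\Phi(A)\mp\Psi(A)$ one checks that $h$ is central, additive, and vanishes on every commutator.

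The hard part will be the step that recasts the associative problem in the bilinear centralizing-trace language, because $\Phi$ is only assumed additive whereas Theorem \ref{xxsec3.4} is formulated for $\mathcal{R}$-bilinear maps. One must either first prove that $\Phi$ is semilinear and absorb a field automorphism $\tau$ of $\mathbb{F}$, which is exactly the source of the $\tau$-linear transformation appearing when $\dim X<\infty$, or re-derive the functional identities underlying Lemma \ref{xxsec3.8} directly in the additive category. A second, genuinely operator-theoretic obstacle, specific to the Banach-space setting, is showing that the abstract Jordan isomorphism $\Psi$ is spatially implemented by a bounded invertible (conjugate-)linear operator $T$ when $\dim X=\infty$; establishing continuity and the spatial form requires rank-one and reflexivity arguments on the nest algebra that lie beyond the purely algebraic trace machinery.
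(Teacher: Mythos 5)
The first thing to be clear about is that the paper does not prove this statement at all: Theorem \ref{xxsec5.6} is quoted verbatim from Qi, Hou and Deng (\cite[Theorem 1.1]{QiHouDeng}) in the ``future research'' section, solely as motivation for Conjecture \ref{xxsec5.8} and Question \ref{xxsec5.7}. So there is no proof in the paper to compare against, and what you have written must stand or fall on its own. Your sufficiency argument is fine, but the necessity half has two gaps that are fatal rather than merely ``hard parts to be filled in later.'' First, your opening move --- ``choose a nontrivial element of $\mathcal{N}$ affording an idempotent $P$'' --- is not available on a Banach space: an element of the nest is a closed subspace, and closed subspaces of Banach spaces need not be complemented, so in general there is no bounded idempotent with that range and hence no Peirce decomposition realizing ${\rm Alg}\mathcal{N}$ as a generalized matrix algebra. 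The entire reduction to Section \ref{xxsec3} collapses at this step; this is precisely one of the reasons the Banach-space case requires genuinely operator-theoretic methods (rank-one operators, annihilator arguments) rather than the algebraic frame of this paper.

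Second, even where an idempotent exists (e.g.\ the Hilbert space case), Theorem \ref{xxsec3.4} and all of its supporting lemmas (Lemmas \ref{xxsec3.5}--\ref{xxsec3.19}) are stated and proved for $\mathcal{R}$-\emph{bilinear} maps and $\mathcal{R}$-\emph{linear} commuting maps; your map $\mathfrak{q}(y,z)=\Phi(\Phi^{-1}(y)\Phi^{-1}(z))$ is only \emph{biadditive}, because $\Phi$ is only assumed to be a Lie \emph{ring} isomorphism. You flag this yourself, but flagging it is not resolving it: scalars cannot be pulled through $\Phi$, so none of the identities in Lemma \ref{xxsec3.8} (which are obtained by substituting $ra_i$ and comparing coefficients, e.g.\ replacing $a_4$ by $-a_4$ is harmless but the properness conclusions invoke linearity of the unknown maps) survive, and the conclusion of Theorem \ref{xxsec3.4} --- properness with $\mathcal{Z(G)}$-linear coefficient maps --- is simply not the right target. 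This is not a technicality: over $\mathbb{C}$ there exist discontinuous additive maps, and it is exactly this phenomenon that forces the $\tau$-linear transformation (for a field automorphism $\tau$) in the finite-dimensional case of the Qi--Hou--Deng theorem. Any correct proof must handle the additive category from the start; the paper's machinery cannot be ``mimicked'' here, which is why the authors state the Lie-triple analogue only as Conjecture \ref{xxsec5.8} rather than as a corollary of Theorem \ref{xxsec4.4}.
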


In light of Theorem \ref{xxsec5.6}, it is deserved to pay much more attention to centralizing traces and Lie triple isomorphisms of nest algebras on Banach spaces.

\begin{question}\label{xxsec5.7}
Let $\textbf{\rm X}$ be a Banach space, $\mathcal{N}$ be a nest of $\textbf{\rm X}$ and ${\mathcal Alg}(\mathcal{N})$ be the nest algebra associated with $\mathcal{N}$. Suppose that ${\mathfrak q}\colon {\mathcal Alg}(\mathcal{N})\times
{\mathcal Alg}(\mathcal{N})\longrightarrow {\mathcal Alg}(\mathcal{N})$ is an $\mathcal{R}$-bilinear
mapping. Then every centralizing trace ${\mathfrak T}_{\mathfrak q}:
{\mathcal Alg}(\mathcal{N})\longrightarrow {\mathcal Alg}(\mathcal{N})$ of ${\mathfrak q}$ is
proper.
\end{question}

Let us end this paper with the following conjecture.

\begin{conjecture}\label{xxsec5.8}
Let $\mathcal{N}$ and $\mathcal{M}$ be nests on Banach spaces $X$ and $Y$ over the (real or complex)
field $\mathbb{F}$ and let ${\rm Alg}\mathcal{N}$ and ${\rm Alg}\mathcal{M}$ be the associated nest algebras, respectively. Then a mapping $\Phi\colon  {\rm Alg}\mathcal{N}\longrightarrow {\rm Alg}\mathcal{M}$ is a Lie triple ring isomorphism, that is, $\Phi$ is additive, bijective and satisfies $\Phi([[A, B], C])=[[\Phi(A), \Phi(B)], \Phi(C)]$ for all $A, B, C\in {\rm Alg}\mathcal{N}$, if and only if $\Phi$ has the form $\Phi(A)=\Psi(A)+h(A)I$ for all $A\in {\rm Alg}\mathcal{N}$, where $\Psi$ is a ring isomorphism or the negative of a ring anti-isomorphism between the nest algebras and $h$ is an additive functional satisfying $h([[A, B],C])=0$ for all $A, B, C\in {\rm Alg}\mathcal{N}$.
\end{conjecture}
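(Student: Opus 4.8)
The plan is to mirror, in the additive and Banach-space setting, the centralizing-trace strategy that produced Theorem \ref{xxsec4.4}, while borrowing the analytic input behind Theorem \ref{xxsec5.6}. The easy direction is routine: if $\Phi=\Psi+h(\cdot)I$ with $\Psi$ a ring isomorphism or the negative of a ring anti-isomorphism and $h$ vanishing on all second commutators, then a direct check shows $\Psi([[A,B],C])=[[\Psi(A),\Psi(B)],\Psi(C)]$. In the anti-isomorphism case one uses that for $\Psi=-\psi$ one has $\Psi(A)\Psi(B)=\psi(BA)$, so that $[\Psi(A),\Psi(B)]=\Psi([A,B])$; the two negations cancel at the inner bracket, $\Psi$ is a genuine Lie homomorphism, and the identity for the double bracket follows by iterating. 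The central term $h(\cdot)I$ disappears from every commutator. Thus $\Phi$ is a Lie triple ring isomorphism, and all the difficulty lies in the converse.

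First I would realize both nest algebras as generalized (triangular) matrix algebras. When the nests are nontrivial, fix a nontrivial element and its associated idempotent $E$ with $F=I-E$; since operators in the nest algebra leave the nest invariant, $F({\rm Alg}\mathcal{N})E=0$, so $\Phi$ acts on the triangular algebra $\left[\smallmatrix E({\rm Alg}\mathcal{N})E & E({\rm Alg}\mathcal{N})F \\ 0 & F({\rm Alg}\mathcal{N})F \endsmallmatrix\right]$, with the off-diagonal corner loyal because of the prime-like behaviour of the diagonal corners. This brings the problem into the framework of Section \ref{xxsec4}. The obstacle in copying Proposition \ref{xxsec4.3} verbatim is that $\Phi$ is merely \emph{additive}, not $\mathcal{R}$-linear, so the bilinear-trace machinery of Theorem \ref{xxsec3.4} does not apply directly. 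The remedy, following the finite-dimensional half of Theorem \ref{xxsec5.6}, is first to prove that an additive Lie triple isomorphism is automatically $\tau$-semilinear for some field automorphism $\tau$ of $\mathbb{F}$, and that, over $\mathbb{C}$, the Banach topology forces $\tau$ to be the identity or complex conjugation, so that $\Phi$ is linear or conjugate-linear. This semilinearity should be extracted from the action of $\Phi$ on the idempotent lattice together with a rigidity argument on rank-one operators.

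Once semilinearity is in hand, the argument of Proposition \ref{xxsec4.3} runs through: the map $y\mapsto\Phi(\Phi^{-1}(y)^2)$ is a centralizing trace of a biadditive ($\tau$-sesquilinear) mapping on ${\rm Alg}\mathcal{M}$, and one must show it is proper. This is exactly the content of Question \ref{xxsec5.7}, so resolving that question is a prerequisite. Granting properness, one writes $\Phi(\Phi^{-1}(y)^2)=\lambda y^2+\mu_1(y)y+\nu_1(y)$ and sets $\mathfrak{m}(x)=\lambda\Phi(x)+\frac{1}{2}\mu(x)$, forcing $\mathfrak{m}$ to be a Jordan homomorphism with $\lambda=\pm 1$, precisely as in the proof of Proposition \ref{xxsec4.3}. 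Finally one invokes the structure of Jordan isomorphisms of nest algebras: such a map splits into an isomorphism part and an anti-isomorphism part on complementary central ideals, and since the nest algebra is indecomposable in this sense, $\mathfrak{m}$ is globally a ring isomorphism or a ring anti-isomorphism. Combined with $\lambda=\pm 1$ this yields $\Psi=\pm\mathfrak{m}$ of the required type, with $h(A)I$ absorbing $\mathfrak{n}$.

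The main obstacle is twofold. The deepest analytic input is the properness of centralizing traces on Banach-space nest algebras (Question \ref{xxsec5.7}): in the infinite-dimensional prime corner the algebra does not satisfy the hypotheses of Theorem \ref{xxsec3.4}, so a separate rank/continuity argument is needed there, and I expect this to be the genuine crux. The second difficulty, specific to the additive rather than $\mathcal{R}$-linear hypothesis, is controlling the field automorphism $\tau$ and ruling out pathological automorphisms of $\mathbb{C}$; this is where the Banach topology must be exploited, exactly as in Theorem \ref{xxsec5.6}, to force $\tau$ to be continuous and hence tame. The semilinearity and Jordan-decomposition steps should then be adaptations of existing techniques.
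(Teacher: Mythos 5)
The statement you are addressing is the paper's closing Conjecture 5.8: the paper offers \emph{no proof} of it, but poses it (alongside Question 5.7) as an open problem, so your argument has to stand entirely on its own --- and it does not; it is a conditional program rather than a proof. The hard direction rests on two inputs that are established nowhere. First, the properness of centralizing traces of biadditive mappings on Banach-space nest algebras, which you correctly identify as the crux, is literally Question 5.7 of this same paper; invoking it reduces one open problem to another. Second, the claim that an additive Lie triple isomorphism between such algebras is automatically $\tau$-semilinear with $\tau$ the identity or conjugation is asserted, not proved; the corresponding rigidity in Theorem 5.6 is proved by Qi, Hou and Deng for Lie \emph{ring} isomorphisms, and nothing in your sketch shows it transfers to the strictly weaker Lie triple hypothesis. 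The reduction to the triangular frame, the Jordan-decomposition step and the surjectivity argument are plausible adaptations of Proposition 4.3, but with these two pillars missing there is no proof here.

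There is also a mismatch that no amount of work along these lines can repair, because it lies in the statement itself. Your machinery, like Proposition 4.3 and Theorem 4.4 which it mirrors, can only ever deliver $\Phi=\pm\mathfrak{m}+\mathfrak{n}$ with $\mathfrak{m}$ a Jordan isomorphism, hence \emph{four} possibilities for the multiplicative part: $\pm$(isomorphism) and $\pm$(anti-isomorphism). The conjecture allows only two of them, and the two it omits are genuine: a plain ring anti-isomorphism $\psi$ is itself a Lie triple ring isomorphism, since $\psi([A,B])=-[\psi(A),\psi(B)]$ and the two sign changes cancel inside the double bracket --- your own verification of the easy direction exhibits exactly this cancellation without noticing its consequence. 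Concretely, let $X=Y=\mathbb{C}^2$, let $\mathcal{N}=\mathcal{M}$ be the nest $0\subset \mathbb{C}e_1\subset \mathbb{C}^2$, so that ${\rm Alg}\mathcal{N}$ is the upper triangular $2\times 2$ algebra, and let $\psi(A)=JA^{t}J$ with $J$ the flip matrix; then $\psi$ is an anti-automorphism of ${\rm Alg}\mathcal{N}$, hence an additive, bijective map preserving double brackets. If one could write $\psi=\Psi+hI$ with $\Psi$ multiplicative and $h$ additive, then putting $X=\psi(A)$, $Y=\psi(B)$ and $h'=h\circ\psi^{-1}$ one gets $[Y,X]=h(AB)I-h'(Y)X-h'(X)Y+h'(X)h'(Y)I$ for all $A,B$; testing $(X,Y)=(E_{11},E_{12})$ and $(E_{12},E_{22})$ forces $h'(E_{11})=h'(E_{22})=1$ and $h'(E_{12})=0$, and then $(X,Y)=(E_{11}+E_{12},E_{11})$ gives $E_{12}=2E_{22}-E_{12}$, a contradiction; the case where $\Psi$ is the negative of an anti-isomorphism dies the same way on $(E_{11},E_{12})$ and $(E_{11},E_{11})$. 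So the conjecture as literally stated is false; the correct target is $\Phi=\pm\Psi+hI$ (exactly as in the paper's own Theorem 4.4), and your closing claim that $\lambda=\pm 1$ lands you in ``the required type'' glosses over precisely the gap between what the method can prove and what the statement asserts.
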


\vspace{4mm}
{\bf Acknowledgements}
\vspace{1mm}

This research was done when the first author visited the School of Mathematics
and Statistics at Beijing Institute of Technology in
the summer of 2014. She takes this opportunity to express her sincere thanks
to the School of Mathematics and Statistics and the Office of International
Affairs at Beijing Institute of Technology for the hospitality
extended to them during her visit. We are deeply grateful to a special
Training Program of International Exchange and Cooperation
of the Beijing Institute of Technology. All authors are thankful
to Professor Dominik Benkovi\v{c} and Professor Daniel Eremita for their
careful reading and constructive comments for the original manscript.


\begin{thebibliography}{}


\bibitem[1]{BaiDuHou1} 
Z. -F. Bai, S. -P. Du and J. -C. Hou, 
{\em Multiplicative Lie
isomorphisms between prime rings}, Comm. Algebra, \textbf{36}
(2008), 1626-1633.


\bibitem[2]{BaiDuHou2} 
Z. -F. Bai, S. -P. Du and J. -C. Hou, 
{\em Multiplicative
$\ast$-Lie isomorphisms between factors}, J. Math. Anal. Appl.,
\textbf{346} (2008), 327-335.



\bibitem[3]{BanningMathieu} 
R. Banning and M. Mathieu, 
{\em Commutativity preserving mappings on
semiprime rings}, Comm. Algebra, \textbf{25} (1997), 247-265.



\bibitem[4]{Beidar} 
K. I. Beidar, 
{\em On functional identities and commuting additive mappings}, Comm. Algebra, \textbf{26} (1998), 1819-1850.



\bibitem[5]{BeidarBresarChebotar} 
K. I. Beidar, M. Bre\v{s}ar and
M. A. Chebotar, 
{\em Functional identities on upper triangular
matrix algebras}, J. Math. Sci., \textbf{102} (2000) 4557-4565.



\bibitem[6]{BeidarBresarChebotarMartindale}
K. I. Beidar, M. Bre\v{s}ar, M. A. Chebotar and W. S. Martindale, 3rd, 
{\em On Herstein's Lie Map
Conjectures, III}, J. Algebra, \textbf{249} (2002), 59-94.



\bibitem[7]{BeidarMartindaleMikhalev} 
K. I. Beidar, W. S. Martindale, 3rd and A. V. Mikhalev, 
{\em Lie isomorphisms in prime
rings with involution}, J. Algebra, \textbf{169} (1994), 304-327.



\bibitem[8]{Benkovic1} 
D. Benkovi\v{c}, 
{\em Lie derivations on triangular
matrices}, Linear Multilinear Algebra, \textbf{55} (2007), 619-626.



\bibitem[9]{Benkovic2} 
D. Benkovi\v{c}
{\em Biderivations
of triangular algebras}, Linear Algebra Appl., \textbf{431} (2009),
1587-1602.



\bibitem[10]{Benkovic3} 
D. Benkovi\v{c},
 {\em Generalized
Lie derivations on triangular algebras}, Linear Algebra Appl.,
\textbf{434} (2011), 1532-1544.



\bibitem[11]{Benkovic4} 
D. Benkovi\v{c},
{\em Lie triple derivations on triangular matrices}, Algebra
Colloq., \textbf{18} (2011), Special Issue No.1, 819-826.



\bibitem[12]{Benkovic5} 
D. Benkovi\v{c},
{\em Lie triple derivations of unital algebras with idempotents}, linear Multilinear Algebra, In Press, http://dx.doi.org/10.1080/03081087.2013.851200.



\bibitem[13]{BenkovicEremita1} 
D. Benkovi\v{c} and D. Eremita, 
{\em Commuting traces and commmutativity preserving maps on triangular
algebras}, J. Algebra, \textbf{280} (2004), 797-824.



\bibitem[14]{BenkovicEremita2} 
D. Benkovi\v{c} and D. Eremita, 
{\em Multiplicative Lie $n$-derivations of triangular rings}, Linear
Algebra Appl., \textbf{436} (2012), 4223-4240.


\bibitem[15]{BenkovicSirovnik} 
D. Benkovi\v{c} and N. \v{S}irovnik, 
{\em Jordan derivations of unital algebras with idempotents}. Linear Algebra Appl., \textbf{437} (2012), 2271-2284.



\bibitem[16]{BobocDascalescuWyk} 
C. Boboc, S. D\v{a}sc\v{a}lescu and L. van Wyk,
{\em Isomorphisms between Morita context
rings}, Linear Multilinear Algebra, \textbf{60} (2012), 545-563.


\bibitem[17]{Bresar0} 
M. Bre\v{s}ar, 
{\em On a generalization
of the notion of centralizing mappings}, Proc. Amer. Math. Soc.,
\textbf{114} (1992), 641-649.


\bibitem[18]{Bresar1}
 M. Bre\v{s}ar, 
{\em Commuting
traces of biadditive mappings, commutativity-preserving mappings and
Lie mappings}, Trans. Amer. Math. Soc., \textbf{335} (1993),
525-546.



\bibitem[19]{Bresar2} 
M. Bre\v{s}ar, 
{\em Centralizing mappings and derivations in prime rings}, J. Algebra, \textbf{156} (1993),
385-394.



\bibitem[20]{Bresar3} 
M. Bre\v{s}ar, 
{\em Commuting maps: a survey}, Taiwanese J. Math., \textbf{8} (2004), 361-397.



\bibitem[21]{BresarChebotarMartindale} 
M. Bre\v{s}ar, M. A. Chebotar and W.S. Martindale 3rd, 
{\em Functional identities,
Frontiers in Mathematics}, Birkh\"{a}user Verlag, Basel, 2007.



\bibitem[22]{BresarEremitaVillena} 
M. Bre\v{s}ar and D. Eremita and A. R. Villena,
{\em Functional identities in Jordan algebras: associating traces and Lie triple isomorphisms}, Comm. Algebra \textbf{31} (2003), 1207-1234.



\bibitem[23]{BresarSpenko} 
M. Bre\v{s}ar and \v{S}. \v{S}penko, 
{\em Functional identities on matrix algebras}, arXiv:[math.RA] 1403.5454.



\bibitem[24]{Brown} W. P. Brown, {\em Generalized matrix algebras},
Canad. J. Math., \textbf{7} (1955), 188-190.


\bibitem[25]{Calderon} 
A. J. Calder\'{o}n Mart\'{i}n , 
{\em Graded triangular algebras},
Electron. J. Linear Algebra, \textbf{27} (2014), 317-331.


\bibitem[26]{CalderonGonzalez1} 
A. J. Calder\'{o}n Mart\'{i}n and C. Mart\'{i}n Gonz\'{a}lez, 
{\em Lie isomorphisms on $H^*$-algebras},
Comm. Algebra, \textbf{31} (2003), 323-333.



\bibitem[27]{CalderonGonzalez2} 
A. J. Calder\'{o}n Mart\'{i}n and C.
Mart\'{i}n Gonz\'{a}lez, 
{\em The Banach-Lie group of Lie triple
automorphisms of an $H^*$-algebra}, Acta Math. Sci. (Ser. English),
\textbf{30} (2010), 1219-1226.



\bibitem[28]{CalderonGonzalez3} 
A. J. Calder\'{o}n Mart\'{i}n and C. Mart\'{i}n Gonz\'{a}lez, 
{\em A linear approach to Lie triple
automorphisms of $H^*$-algebras}, J. Korean Math. Soc., \textbf{48}
(2011), 117-132.



\bibitem[29]{CalderonHaralampidou} 
A. J. Calder\'{o}n Mart\'{i}n and M. Haralampidou, 
{\em Lie mappings on locally $m$-convex
$H^*$-algebras}, International Conference on Topological Algebras
and their Applications. ICTAA 2008, 42-51, Math. Stud. (Tartu), 4,
Est. Math. Soc., Tartu, 2008.



\bibitem[30]{Cheung1} W. S. Cheung, 
{\em Maps on triangular algebras},
Ph.D. Dissertation, University of Victoria, 2000. 172pp.



\bibitem[31]{Cheung2} 
W. S. Cheung, 
{\em Commuting maps of triangular algebras},
J. London Math. Soc., \textbf{63} (2001), 117-127.



\bibitem[32]{Cheung3} W. S. Cheung, {\em Lie derivations of triangular
algebras}, Linear Multilinear Algebra, \textbf{51} (2003), 299-310.



\bibitem[33]{Dokovic} 
D. \v{Z}. Dokovi{\'c}, 
{\em Automorphisms
of the Lie algebra of upper triangular matrices over a connected
commutative ring}, J. Algebra, \textbf{170} (1994), 101-110.



\bibitem[34]{Dolinar1} 
G. Dolinar, 
{\em Maps on upper triangular matrices preserving Lie
products}, Linear Multilinear Algebra, \textbf{55} (2007), 191-198.



\bibitem[35]{Dolinar2} 
G. Dolinar, 
{\em Maps on $M_n$ preserving Lie products},
Publ. Math. Debrecen, \textbf{71} (2007), 467-477.


\bibitem[36]{DuWang1} 
Y. Du and Y. Wang, 
{\em $k$-commuting maps on triangular algebras}, Linear Algebra Appl., \textbf{436} (2012), 1367-1375.



\bibitem[37]{DuWang2} 
Y. Du and Y. Wang, 
{\em Lie derivations of generalized matrix algebras}, Linear Algebra Appl., \textbf{437}
(2012) 2719-2726.



\bibitem[38]{DuWang3} 
Y. Du and Y. Wang, 
{\em Biderivations of generalized matrix algebras}, Linear Algebra Appl., \textbf{438} (2013), 4483-4499.



\bibitem[39]{Eremita1} 
D. Eremita, 
{\em Functional identities of degree $2$ in triangular rings}, Linear Algebra Appl., \textbf{438} (2013), 584-597.



\bibitem[40]{Eremita2} 
D. Eremita, 
{\em Functional identities of degree $2$ in triangular rings revisited}, Linear Multilinear Algebra, In press, http://dx.doi.org/10.1080/03081087.2013.877012.


\bibitem[41]{FosnerWeiXiao} 
 A. Fo\v sner, F. Wei and Z. -K. Xiao,
{\em Centralizing traces and Lie triple isomorphisms on triangular algebras},
Linear Multilinear Algebra, In Press, http://dx.doi.org/10.1080/03081087.2014.932356.



\bibitem[42]{Franca1} 
W. Franca, 
{\em Commuting maps on some subsets of matrices that are not closed under addition}, Linear Algebra Appl., \textbf{437} (2012), 388-391.



\bibitem[43]{Franca2} 
W. Franca, 
{\em Commuting maps on rank-$k$  matrices}, Linear Algebra Appl., \textbf{438} (2013), 2813-2815.



\bibitem[44]{Herstein} 
I. N. Herstein, 
{\em Lie and Jordan structures in
simple, associative rings}, Bull. Amer. Math. Soc., \textbf{67}
(1961), 517-531.



\bibitem[45]{Hua} 
L. Hua, 
{\em A theorem on matrices over an sfield and its applications},
J. Chinese Math. Soc. (N.S.), \textbf{1} (1951), 110-163.


\bibitem[46]{KhazalDascalescuWyk} 
R. Khazal, S. D\v{a}sc\v{a}lescu and L. van Wyk,
{\em Isomorphism of generalized triangular matrix rings and recovery
of tiles}, Int. J. Math. Math. Sci., \textbf{9} (2003), 533-538.



\bibitem[47]{Krylov1} 
P. A. Krylov, 
{\em Isomorphism of genralized
matrix rings}, (Russian) Algebra Logika, \textbf{47} (2008), 456-463; Engl. Transl.: Algebra Logic, \textbf{47} (2008), 258-262.




\bibitem[48]{Krylov2} 
P. A. Krylov, 
{\em Injective modules over formal matrix rings}, (Russian) Sibirsk. Mat. Zh., \textbf{51} (2010), 90-97; Engl. Transl.: Sib. Math. J., \textbf{51} (2010), 72-77. 



\bibitem[49]{Krylov3} 
P. A. Krylov, 
{\em The group $K_0$ of a generalized matrix ringIsomorphism of genralized
matrix rings}, (Russian) Algebra Logika, \textbf{52} (2013), 370-385; Engl. Transl.: Algebra Logic, \textbf{52} (2013), 250-261.




\bibitem[50]{KrylovTuganbaev} 
P. A. Krylov and A. A. Tuganbaev, 
{\em Modules over formal matrix rings}, (Russian) Fundam. Prikl. Mat., \textbf{15} (2009), 145-211; Engl. Transl.: J. Math. Sci. (N. Y.), \textbf{171} (2010), 248-295.



\bibitem[51]{LeeWongLinWang} 
P. -H. Lee, T. -L. Wong, J. -S. Lin
and R. -J. Wang, 
{\em Commuting traces of multiadditive mappings},
J. Algebra, \textbf{193} (1997), 709-723.



\bibitem[52]{LiWei1} 
Y. -B. Li and F. Wei, 
{\em Semi-centralizing maps
of genralized matrix algebras}, Linear Algebra Appl., \textbf{436}
(2012), 1122-1153.


\bibitem[53]{LiWei2} 
Y. -B. Li and F. Wei, 
{\em $k$-Commuting mappings of generalized matrix algebras}, 
arXiv:[math.RA] 1111.6316. 



\bibitem[54]{LiWykWei} 
Y. -B. Li, L. van Wyk and F. Wei, 
{\em Jordan
derivations and antiderivations of genralized matrix algebras},
Oper. Matrices, \textbf{7} (2013), 399-415.



\bibitem[55]{Liu1} 
C. -K. Liu, 
{\em Centralizing maps on invertible or singular matrices over division rings}, Linear Algebra Appl., \textbf{440} (2014), 318-324. 




\bibitem[56]{Liu2} 
C. -K. Liu, 
{\em Centralizing maps on rank-$k$ matrices over division rings}, Preprint. 



\bibitem[57]{Lu} 
F. -Y. Lu, {\em Lie isomorphisms of reflexive
algebras}, J. Funct. Anal., \textbf{240} (2006), 84-104.



\bibitem[58]{MarcouxSourour1} 
L. W. Marcoux and A. R. Sourour, 
{\em Commutativity
preserving maps and Lie automorphisms of triangular matrix
algebras}, Linear Algebra Appl., \textbf{288} (1999), 89-104.



\bibitem[59]{MarcouxSourour2} 
L. W. Marcoux and A. R. Sourour, 
{\em Lie isomorphisms of nest algebras}, J. Funct. Anal., \textbf{164}
(1999), 163-180.



\bibitem[60]{Mathieu} 
M. Mathieu, 
{\em Lie mappings of $C^\ast$-algebras},
Nonassociative algebra and its applications, 229-234,
Lecture Notes in Pure and Appl. Math., \textbf{211},
Dekker, New York, 2000.



\bibitem[61]{Miers1} 
C. R. Miers, 
{\em Lie isomorphisms of factors},
Trans. Amer. Math. Soc., \textbf{147} (1970), 55-63.



\bibitem[62]{Miers2} 
C. R. Miers, 
{\em Lie homomorphsism of operator
algebras}, Pacific J. Math., \textbf{38} (1971), 717-735.



\bibitem[63]{Miers3} 
C. R. Miers, 
{\em Lie triple derivations of von Neumann algebras},
Proc. Amer. Math. Soc., \textbf{71} (1978), 57-61.



\bibitem[64]{Morita} 
K. Morita, 
{\em Duality for modules and its
applications to the theory of rings with minimum condition}, Sci.
Rep. Tokyo Kyoiku Diagaku Sect. A, \textbf{6} (1958), 83-142.




\bibitem[65]{QiHou1} 
X. -F. Qi and J. -C. Hou, 
{\em Characterization of $\xi$-Lie
multiplicative isomorphisms}, Oper. Matrices, \textbf{4} (2010),
417-429.



\bibitem[66]{QiHou2} 
X. -F. Qi and J. -C. Hou, 
{\em Characterization of Lie
multiplicative isomorphisms between nest algebras}, Sci. China
Math., \textbf{54} (2011), 2453-2462.




\bibitem[67]{QiHou3} 
X. -F. Qi and J. -C. Hou, 
{\em Characterization of $k$-commuting additive maps on rings}, 
Linear Algebra Appl., (2014), In Press, http://dx.doi.org/10.1016/j.laa.2013.12.038.




\bibitem[68]{QiHouDeng} 
X. -F. Qi, J. -C. Hou and J. Deng, 
{\em Lie ring isomorphisms between nest algebras on Banach spaces}, J. Funct. Anal., \textbf{266} (2014), 4266-4292.




\bibitem[69]{Semrl} 
P. \v{S}emrl, 
{\em Non-linear commutativity preserving maps},
Acta Sci. Math. (Szeged), \textbf{71} (2005), 781-819.



\bibitem[70]{Sourour} 
A. R. Sourour,
{\em Maps on triangular matrix algebras},
Problems in applied mathematics and computational intelligence,
92-96, Math. Comput. Sci. Eng., World Sci. Eng. Soc. Press, Athens,
2001.



\bibitem[71]{WangLu} 
T. Wang and F. -Y. Lu, 
{\em Lie isomorphisms of
nest algebras on Banach spaces}, J. Math. Anal. Appl., \textbf{391}
(2012), 582-594.



\bibitem[72]{WangWang} 
Y. Wang and Y. Wang, 
{\em Multiplicative Lie $n$-derivations of generalized matrix algebras}, Linear Algebra Appl., \textbf{438} (2013), 2599-2616.



\bibitem[73]{Wong} 
T. -L. Wong, 
{\em Jordan isomorphisms of triangualr rings},
Proc. Amer. Math. Soc. \textbf{133} (2005), 3381-3388.



\bibitem[74]{XiaoWei1} 
Z. -K. Xiao and F. Wei, 
{\em Commuting mappings of
generalized matrix algebras}, Linear Algebra Apll., \textbf{433}
(2010), 2178-2197.



\bibitem[75]{XiaoWei2} 
Z. -K. Xiao and F. Wei, 
{\em Commuting traces
and Lie isomorphisms on generalized matrix algebras}, Oper. Matrices, Paper OaM-0837, In Press.



\bibitem[76]{YuLu} 
X. -P. Yu and F. -Y Lu, 
{\em Maps preserving Lie product
on $B(X)$}, Taiwanese J. Math., \textbf{12} (2008), 793-806.



\bibitem[77]{ZhangFengLiWu} 
J. -H. Zhang, S. Feng, H. -X. Li and R. -H. Wu, 
{\em Generalized biderivations of nest algebras}, Linear Algebra
Appl., \textbf{418} (2006), 225-233.



\bibitem[78]{ZhangZhang} 
J. -H. Zhang and F. -J. Zhang, 
{\em Nonlinear maps preserving Lie
products on factor von Neumann algebras}, Linear Algebra Appl.,
\textbf{429} (2008), 18-30.


\end{thebibliography}
\end{document}